\newcommand{\m}[1]{\mathbb{#1}}
\newcommand{\gh}[1]{\mathfrak{#1}}
\newcommand{\q}[1]{\mathcal{#1}}
\newcommand{\g}[1]{\ensuremath{\mathbf{#1}}}
\newcommand{\e}{\varepsilon}
\newcommand{\tendf}{\rightharpoonup}
\renewcommand{\Re}{\mathrm{Re}}
\renewcommand{\Im}{\mathrm{Im}}
\DeclareMathOperator{\Span}{\mathrm{Span}}
\theoremstyle{plain}
\newtheorem{thm}{Theorem}
\newtheorem*{thm*}{Theorem}
\newtheorem{prop}{Proposition}
\newtheorem{lem}{Lemma}
\theoremstyle{definition}
\newtheorem{defi}{Definition}
\theoremstyle{remark}
\newtheorem{nb}{Remark}
\newtheorem*{claim}{Claim}
\def\blfootnote{\xdef\@thefnmark{}\@footnotetext}
\title{Construction of multi-soliton solutions \\ for the $L^2$-supercritical gKdV and NLS equations
\footnote{
This research was supported in part by the Agence Nationale de la Recherche
(ANR ONDENONLIN).}}
\author{Raphaël Côte$^{(1)}$ \and Yvan Martel$^{(2)}$ \and Frank Merle$^{(3)}$}
\date{\small (1)  CNRS and CMLS, UMR 7640, Ecole Polytechnique, 91128 Palaiseau, France \\ 
(2) Mathématiques, UMR 8100,
Univ.  Versailles-Saint-Quentin-en-Y., 78035 Versailles, France,\\
Institut Universitaire de France and CNRS\\  
(3) Mathématiques, UMR 8088, Univ. Cergy-Pontoise, 95302 Cergy-Pontoise, France  and IHES
}
\begin{document}

\maketitle

\begin{abstract}
Multi-soliton solutions, i.e. solutions behaving as the sum of $N$ given solitons as $t\to +\infty$, were constructed in previous works for the $L^2$ critical and subcritical (NLS) and (gKdV) equations
	(see \cite{Mer90}, \cite{Mar05} and \cite{MM06}).
	In this paper, we extend the  construction of  multi-soliton solutions to the $L^2$ supercritical case both
	for  (gKdV) and  (NLS) equations, using a topological argument to control the direction of instability.
\end{abstract}

\section{Introduction}

\subsection{The generalized KdV equation}

\noindent We consider the generalized Korteweg-de Vries equations~:
\begin{equation}
\label{gkdv} \tag{gKdV}
u_t + (u_{xx} + u^p )_x =0, \quad (t,x) \in \m R \times \m R, 
\end{equation}
where $p \ge 2$ is an integer. See Section \ref{sec:gkdv} for more general nonlinearities.

Recall that the Cauchy problem for  (\ref{gkdv}) in the energy space $H^1$  has been solved by  Kenig, Ponce and Vega \cite{KPV93}~: 
for all $u_0 \in H^1(\m R)$, there exist $T=T(\| u_0 \|_{H^1}) >0$ and a solution $u \in \q C([0,T],
H^1(\m R))$ to (\ref{gkdv}) satisfying $u(0) = u_0$, unique in some sense. 
Moreover, if $T_1$ denotes the maximal time
of existence for $u$, then either $T_1=+\infty$ (global solution) or $T_1 <
\infty $ and then $\| u(t) \|_{H^1} \to \infty$ as $t \uparrow T_1$ (blow-up solution).

For such solutions, the mass and energy are conserved~:
\begin{gather}
 \displaystyle \int u^2(t) =  \int u^2(0) \qquad (L^2 \text{ mass}),  \\
 \displaystyle E(u(t)) = \frac 1 2\int u_x^2(t) - \frac 1 {p+1} \int u^{p+1}(t) = E(u(0)) \qquad (\text{energy}).
\end{gather}

\medskip

Now, we define $Q \in H^1$, $Q>0$ the unique solution  (up to translations) to
\[ Q_{xx} + Q^p = Q, \quad \text{ i.e. } \quad Q(x) =
\left( \frac{p+1}{2 \cosh^2(\frac{p-1} 2 x)} \right)^{\frac 1 {p-1}}. \]
Let $Q_{c_0}(x) = c_0^{\frac{1}{p-1}} Q(\sqrt{c_0} x)$ and let
\[ R_{c_0,x_0}(t,x) = c_0^{\frac{1}{p-1}} Q(\sqrt c (x-c_0t -x_0)) \]
be the family of soliton solution of the (gKdV) equation.

It is well-known that the stability properties of a soliton solution depend on the sign of
${\frac d{dc} \int Q_c^2}_{|c=c_0}$. Since $\int Q_c^2 = c^{\frac {5-p}{p-1}} \int Q^2$, we distinguish the following three cases:
\begin{itemize}
	\item For $p <5$ ($L^2$ subcritical case), solitons are stable and asymptotically stable in $H^1$ in some suitable sense~: see Cazenave and Lions \cite{CL82}, Weinstein 
\cite{Wei86} , Grillakis, Shatah and Straus \cite{GSS87}, for orbital stability, and Pego and Weintein \cite{PW94}, Martel and Merle \cite{MM01b} for asymptotic stability. 
	\item In the $L^2$ critical  case, i.e. $p=5$, solitons are unstable, and blow up occur for a large class of solutions initially arbitrarily close to a soliton, see Martel and Merle \cite{MM01a}, \cite{MM02a}.
	\item In the case $p>5$ ($L^2$ supercritical case), solitons are unstable (see Grillakis, Shatah and Straus \cite{GSS87} and Bona, Souganidis and Strauss \cite{BSS87}).
\end{itemize}

Now, we focus on  multi-soliton solutions. Given $2N$ parameters defining $N$ solitons with different speeds,
\begin{equation} \label{parameters}
	0 < c_1 < \ldots < c_N, \quad x_1, \ldots x_N \in \m R,
\end{equation}
we call multi-soliton a solution $u(t)$ to \eqref{gkdv} such that
\begin{equation} \label{problem}
	\left\| u(t) - \sum_{j=1}^N R_{c_j,x_j}(t) \right\|_{H^1} \to 0 \quad \text{as} \quad t \to + \infty.
\end{equation}
Let us recall known results on multi-solitons:
\begin{itemize}
	\item For $p=2$ and $3$ (KdV and mKdV), multi-solitons are well-known to exist for any set of parameters \eqref{parameters}, as a consequence of the inverse scattering method. Moreover, these special explicit solutions describe the elastic collision of the solitons (see e.g. Miura \cite{Miu76}). 
	\item In the $L^2$-subcritical and critical cases, i.e. for \eqref{gkdv} with $p \le 5$ (or for some more general nonlinearities under the stability assumption 
	${\frac d{dc} \int Q_c^2}_{|c=c_j}> 0$ for all $j$), Martel \cite{Mar05} constructed multi-solitons for any set of parameters \eqref{parameters}.
	 The proof of this result follows the strategy of Merle \cite{Mer90} (compactness argument) and relies on monotonicity properties developed in \cite{MM01b} (see also \cite{MMT02}).
	Recall that Martel, Merle and Tsai \cite{MMT02} proved stability and asymptotic stability of a sum of $N$ solitons for large time for the subcritical case.
A refined version of the stability result of \cite{MMT02} shows that for a given  set of parameters, there exists a \emph{unique} multi-soliton soliton satisfying \eqref{problem}, see Theorem~1 in \cite{Mar05}.
\end{itemize}

In the present paper, we extend the multi-soliton existence result to the $L^2$-supercritical case, i.e. in a situation where solitons are known to be unstable.

\begin{thm}[Existence of multi-solitons for $L^2$-supercritical \eqref{gkdv}] \label{nsoliton}
Let $p>5$. Let $0<c_1<\ldots < c_N$ and $x_1, \ldots, x_N \in \m R$. There exist $T_0 \in \m R$, $C,\sigma_0>0,$ and a solution $u \in \q C([T_0,\infty), H^1)$ to \eqref{gkdv} such that
\[ \forall t \in [T_0,\infty), \qquad \left\| u(t) - \sum_{j=1}^N R_{c_j,x_j}(t) \right\|_{H^1} \le C e^{-\sigma_0^{3/2} t}. \]
\end{thm}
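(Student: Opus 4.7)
The plan is to adapt the compactness strategy of \cite{Mer90,Mar05} to the supercritical regime, where the linearized flow around each soliton admits a one-dimensional unstable direction (Pego-Weinstein type spectral analysis). I would pick $T_n \to +\infty$ and, for each $n$, solve \eqref{gkdv} backwards from $T_n$ with a family of final data parametrized by $a^n = (a^n_j)_{j=1}^N$ in the closed unit ball $\bar B_N \subset \m R^N$; a topological argument should produce, for each $n$, a specific $a^n$ for which $\|u_n(t) - \sum_j R_{c_j,x_j}(t)\|_{H^1} \le C e^{-\sigma_0^{3/2} t}$ on $[T_0, T_n]$, after which $H^1$-weak compactness as in \cite{Mer90,Mar05} passes to the limit $n \to \infty$.

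Concretely, I would set
\[
u_n(T_n) = \sum_{j=1}^N R_{c_j,x_j}(T_n) + e^{-\sigma_0^{3/2} T_n} \sum_{j=1}^N a^n_j \, Y_j^+(T_n),
\]
where $Y_j^+$ is the (rescaled, translated along the $j$-th trajectory) eigenfunction of the linearized \eqref{gkdv} operator around the $j$-th soliton associated with the unstable eigenvalue $\lambda_j > 0$. As long as $u_n$ stays close to the sum, modulate
\[
u_n(t) = \sum_{j=1}^N R_{c_j(t), x_j(t)}(t) + \e(t),
\]
with $c_j(t), x_j(t)$ determined by the usual $2N$ orthogonality conditions, and set $b_j(t) := (\e(t), Y_j^+(t))$. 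A coercive Lyapunov functional built from energy and mass corrected by modulation, combined with spatially localized monotonicity identities using cut-offs moving at intermediate speeds between consecutive $c_j$ (in the spirit of \cite{MM01b,MMT02}), should yield the bootstrap
\[
\|\e(t)\|_{H^1}^2 \le C e^{-2\sigma_0^{3/2} t} + C \sum_{j=1}^N b_j(t)^2
\]
on any subinterval where each $|b_j(t)| \, e^{\sigma_0^{3/2} t} \le 1$, while a direct computation gives $\dot b_j = \lambda_j b_j + O(\|\e\|_{H^1}^2)$.

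For the topological step, define the exit time $t^\ast(a^n) \in [T_0,T_n]$ as the largest $t$ at which $\max_j |b_j(t)|\, e^{\sigma_0^{3/2} t} = 1$. If $t^\ast(a^n) > T_0$ for every $a^n \in \bar B_N$, the inequality $\lambda_j > \sigma_0^{3/2}$ (valid once $\sigma_0$ is chosen small enough) combined with the bootstrap bound on $\e$ yields the transversality $\frac{d}{dt} \l(|b_j|^2 e^{2\sigma_0^{3/2} t}\r) > 0$ at $t^\ast$, making $a^n \mapsto t^\ast(a^n)$ continuous and turning
\[
a^n \in \bar B_N \longmapsto \l(b_j(t^\ast(a^n))\, e^{\sigma_0^{3/2} t^\ast(a^n)}\r)_{j=1}^N \in S^{N-1}
\]
into a continuous retraction of $\bar B_N$ onto its boundary $S^{N-1}$, since at $t=T_n$ this map equals $a^n$ by construction of the final data. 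This contradicts Brouwer's theorem, so some $a^n$ satisfies $t^\ast(a^n) = T_0$, yielding the desired uniform bound on $[T_0, T_n]$.

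The main obstacle is the bootstrap step: producing a Lyapunov functional whose quadratic part is $H^1$-coercive modulo the $2N$ modulation directions and the $N$ unstable modes $Y_j^+$, while being additively decomposable along the $N$ moving solitons. This forces a spatial localization via cut-offs propagating between consecutive solitons, so that the cross-interaction terms genuinely remain of size $e^{-\sigma_0^{3/2} t}$, and requires a direction-by-direction check in the finite-dimensional symplectic subspace of the linearization confirming that only the $N$ unstable coefficients $b_j$ need to be neutralized topologically, all other stable or neutral modes being absorbed into the coercive estimate.
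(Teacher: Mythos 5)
Your overall architecture (backward resolution from $T_n$, modulation, localized monotonicity, Brouwer shooting, compactness) is the paper's, but there is a genuine gap in the mode bookkeeping, and it is fatal as written. In a \emph{backward} construction the components that must be selected by a topological argument are those that contract forward in time and hence grow as $t$ decreases; these are read off by pairing the error with the \emph{adjoint} eigenfunctions $Z^\pm=LY^\pm$ of $L\partial_x$, and in the paper they are $a^+_j=\int v\,\tilde Z^+_j$, which satisfy $\frac{d}{dt}a^+_j\approx -e_0c_j^{3/2}a^+_j$. Your shooting parameters are attached instead to the forward-unstable modes $Y^+_j$ ($\lambda_j>0$), whose coefficients \emph{contract} going backward and need no fine-tuning at all; meanwhile the genuinely dangerous backward-growing components receive no degree of freedom in your final data and are never estimated --- integrating them back from $T_n$ produces a factor $e^{e_0(T_n-t)}$ against a source of size $e^{-2\sigma_0^{3/2}t}$, which destroys the bootstrap for fixed $t$ as $T_n\to\infty$. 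The inconsistency already shows in your transversality: with $\dot b_j=\lambda_j b_j$ you find $\frac{d}{dt}\bigl(|b_j|^2e^{2\sigma_0^{3/2}t}\bigr)>0$ at $t^\ast$, which contradicts the bound $\le 1$ holding on $(t^\ast,T_n]$; the exit-map/retraction argument requires the opposite sign, which is exactly what the paper obtains for its $a^+$ (see \eqref{eq:Nn}). Note also that, $\partial_x L$ being non-self-adjoint, $b_j=(\varepsilon,Y^+_j)$ does not satisfy the ODE you state; clean equations hold only for the pairings with $Z^\pm_j$.

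The coercivity you defer as ``the main obstacle'' is not a technicality and is doubtful as formulated: in the supercritical case orthogonality to the scaling direction does not restore positivity of $(L\cdot,\cdot)$ (this is the whole difficulty, since $\frac{d}{dc}\int Q_c^2<0$), and subtracting only the $N$ projections onto $Y^+_j$ leaves a degenerate direction per soliton, because $(LY^\pm,Y^\pm)=0$ so each $Y^\pm$ is an isotropic null direction of the quadratic form. The paper's resolution (Lemma \ref{ycoer}, after Duyckaerts--Merle) is coercivity of $(Lv,v)$ up to the \emph{three} projections $(v,Z^+)$, $(v,Z^-)$, $(v,Q_x)$ --- both exponential modes per soliton are needed, and no scaling modulation is used. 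This in turn forces $2N$ parameters in the final data (Lemma \ref{finaldata}), spent as follows: impose $\g a^-(S_n)=0$ so the forward-growing components are controlled by direct backward integration (Lemma \ref{le:a-}), and leave $\g a^+(S_n)$ free for the Brouwer argument (Lemma \ref{le:a+}). Your $N$-parameter ansatz cannot accomplish both tasks simultaneously, so the proof needs to be restructured along these lines: drop the scaling modulation, track both families $a^\pm_j$ via the adjoint eigenfunctions, and take final data spanning both $Z^+_j$ and $Z^-_j$.
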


\begin{nb}
	As in the subcritical case, the  proof of Theorem \ref{nsoliton} is based on a compactness argument and on some large time uniform estimates, however, it also involves	an additionnal topological argument to control an instable direction of the linearized operator around each $Q_{c_j}$.
	The proof relies decisively on the introduction of $L^2$ eigenfunctions $Y^\pm$ of the linearized operator, contructed by Pego and Weinstein \cite{PW92} by ODE techniques. Note that in \cite{PW92},  the existence of such eigenfunctions for $Q_{c_0}$ is proved to be equivalent to ${\frac d{dc}\int Q_c^2}_{|c=c_0}<0$.
	
	It is possible that other methods of contruction work for some range of parameters $0<c_1<\ldots<c_N$, but due to the instable directions,  the use of such a topological argument is probably necessary to  treat the general case \eqref{parameters}.

	Finally, note that the solution $u(t)$ of Theorem \ref{nsoliton} belongs to $H^s$, and that the convergence
	to $\sum_{j=1}^N R_{c_j,x_j}(t)$ holds in $H^s$, for any $s\geq 1$ (see	Proposition 5 of \cite{Mar05}).
\end{nb}

We refer to Section \ref{sec:gkdv} for a similar existence result for (gKdV) equations with general nonlinearities.

\subsection{The non linear Schrödinger equations}

Now we turn to the case of the non linear Schrödinger equations~:
\begin{equation}
\label{nls} \tag{NLS}
i u_t + \Delta u + |u|^{p-1} u =0, \quad (t,x) \in \m R \times \m R^d, \quad u(t,x) \in \m C,
\end{equation}
where $p >1$, for any space dimension $d\geq 1$. Concerning the local well-posedness of the Cauchy problem in $H^1$, we refer to Ginibre and Velo \cite{GV79}. Recall that $H^1$ solutions satisfy the conservation laws
$$
\int  |u|^2(t)=\int |u_0|^2, \quad \Im \int (\bar u \nabla u )(t)=\Im \int \bar u_0 \nabla u_0,
$$
$$
\frac 12 \int |\nabla u(t)|^2 -\frac 1{p+1} \int |u|^{p+1}(t)
=\frac 12 \int |\nabla u_0(t)|^2 -\frac 1{p+1} \int |u_0|^{p+1}.
$$
Consider the radial positive solution $Q \in H^1(\m R^d)$ to
\begin{equation}\label{Qnls}
	\Delta Q + Q^p  = Q,
\end{equation}
which is the unique positive solution of this equation up to translations. 
We refer to \cite{GNN81}, \cite{BL83} and \cite{Kwo89} for classical existence and uniqueness results on equation \eqref{Qnls}.
Given $v_0,x_0 \in \m R^d$, $\gamma_0 \in \m R$ and $c_0 >0$, the function
\[ R_{c_0,\gamma_0,v_0,x_0}(t,x) = c_0^{\frac{1}{p-1}} Q(\sqrt{c_0} (x-v_0t -x_0)) e^{i (\frac 1 2 v_0\cdot x - \frac{1}{4}\| v_0 \|^2 t+ c_0 t + \gamma_0)} \]
is a soliton solution to \eqref{nls}, moving on the line $x=x_0+v_0t$.

\medskip

We recall the following classical results (for any $d\geq 1$):
\begin{itemize}
\item For $1<p < 1+4/d$, ($L^2$ subcritical case) Cazenave and Lions \cite{CL82} proved that solitons are orbitally stable in $H^1$.
Multi-solitons (defined in a similar way as for  (gKdV)) were constructed in this setting by Martel and Merle \cite{MM06}.

\item In the $L^2$ critical case, $p= 1+4/d$, solitons are unstable, however multi-solitons were constructed by Merle \cite{Mer90}, as a consequence of the construction of special solutions of (NLS) blowing up in finite time at $N$ prescribed points.

\item For $p \in ( 1+ \frac{4}{d}, \frac{d+2}{d-2})$ (for $d=1,2$,  $p > 1+ \frac{4}{d}$)~: solitons are unstable (see \cite{GSS87}).
Recall that $p=\frac{d+2}{d-2}$ corresponds to the critical $\dot H^1$ case.
\end{itemize}

We claim  the following analogue of Theorem  \ref{nsoliton} in the context of the $L^2$ supercritical (NLS) equation.

\begin{thm}[Multi-solitons for $L^2$ supercritical \eqref{nls}] \label{nsolitonnls}
Let $p \in ( 1+ \frac{4}{d}, \frac{d+2}{d-2} )$ ($p > 1+ \frac{4}{d}$ for $d=1,2$). Let $c_1,\ldots, c_N >0$, $\gamma_1, \ldots, \gamma_N \in \m R$, $x_1, \ldots, x_N \in \m R^d$, and $v_1, \ldots, v_N \in \m R^d$ be such that
\[ \forall k \ne j, \quad  v_k \ne v_j. \]
Then there exist $T_0 \in \m R$, $C,\sigma_0>0,$ and a solution $u \in \q C([T_0,\infty), H^1)$ to \eqref{nls} such that
\[ \forall t \in [T_0,\infty), \qquad \left\| u(t) - \sum_{j=1}^N R_{c_j,\gamma_j, v_j,x_j}(t) \right\|_{H^1} \le C e^{-\sigma_0^{3/2} t}. \]
\end{thm}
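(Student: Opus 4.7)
The plan is to follow the same scheme as for Theorem \ref{nsoliton}, adapted to the NLS setting. One constructs a sequence of backward-in-time approximate solutions $u_n \in \q{C}([T_0, T_n], H^1)$ with $T_n \to +\infty$, satisfying uniformly in $n$
\[
\Bigl\| u_n(t) - \sum_{j=1}^N R_{c_j,\gamma_j,v_j,x_j}(t) \Bigr\|_{H^1} \le C e^{-\sigma_0^{3/2} t} \quad \text{on } [T_0, T_n],
\]
and extracts a weak $H^1$ limit of $u_n(T_0)$ by a compactness argument of Merle, which by continuous dependence for \eqref{nls} yields the desired multi-soliton. The new feature compared with the subcritical construction in \cite{MM06} is that each linearized operator around a soliton $R_j$ admits a pair of exponentially growing modes $Y^\pm_j$ with real eigenvalues $\pm \lambda_j$, forcing a finite-dimensional topological argument to control the $N$ instability directions simultaneously.

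Concretely, for the $n$-th iteration I would solve \eqref{nls} backward from $T_n$ with final data
\[
u_n(T_n) = \sum_{j=1}^N R_{c_j,\gamma_j,v_j,x_j}(T_n) + \sum_{j=1}^N b_{n,j}\, \q{Y}_j^+(T_n),
\]
where $\q{Y}_j^+(t)$ denotes the unstable mode of $R_j$ transported along the $j$-th soliton (with its Galilean and phase factors), and $\vec{b}_n = (b_{n,1},\ldots,b_{n,N})$ is a free parameter in the ball of $\m{R}^N$ of radius $e^{-\sigma_0^{3/2} T_n}$. On the maximal subinterval where $u_n$ admits a modulated decomposition $u_n(t) = \sum_j \tilde{R}_j(t) + w(t)$ with $\| w(t) \|_{H^1} \le e^{-\sigma_0^{3/2} t}$, the modulation parameters are determined by imposing orthogonality of $w$ with respect to the generalized kernel of each linearized operator $L_j$, and $a_j^+(t)$ is defined as the projection of $w(t)$ onto the unstable direction. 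The hypothesis $v_k \ne v_j$ ensures that the solitons are spatially separated for large $t$, so that a localized mass--energy--momentum Lyapunov functional built from the conservation laws of \eqref{nls}, in the spirit of \cite{MMT02, MM06}, together with coercivity of the linearized Hamiltonian modulo its kernel and its unstable mode, should yield
\[
\| w(t) \|_{H^1}^2 \lesssim \sum_{j=1}^N |a_j^+(t)|^2 + e^{-2\sigma_0^{3/2} t}.
\]

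It then remains to choose $\vec{b}_n$ so that $|a_j^+(t)| \lesssim e^{-\sigma_0^{3/2} t}$ on the whole of $[T_0, T_n]$. Differentiating $a_j^+$ via $\partial_t w = L w + (\text{nonlinear and interaction errors})$ should produce a transversality estimate of the form $\tfrac{d}{dt} |\vec{a}^+(t)|^2 > 0$ whenever $|\vec{a}^+(t)| = e^{-\sigma_0^{3/2} t}$, preventing $\vec{a}^+$ from re-entering the ball after exiting. If for every $\vec{b}_n$ in the parameter ball such an exit time $t_\ast \in [T_0, T_n]$ existed, the map $\vec{b}_n \mapsto e^{\sigma_0^{3/2} t_\ast} \vec{a}^+(t_\ast)$ would provide a continuous retraction of the closed ball of $\m{R}^N$ onto its boundary sphere, contradicting Brouwer's theorem. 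Hence a suitable $\vec{b}_n$ exists, $u_n$ satisfies the required uniform exponential estimate, and passing to the weak limit produces the multi-soliton. The main obstacle is executing this topological step rigorously: one has to isolate the dominant $+\lambda_j a_j^+$ contribution in the evolution of $a_j^+$ while controlling the coupling between the $N$ unstable modes through the slowly decaying soliton interactions, and to establish continuity of the exit-time map $\vec{b}_n \mapsto t_\ast(\vec{b}_n)$ via the transversality property.
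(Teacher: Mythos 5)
Your overall scheme is the paper's: backward resolution from $t=S_n$ with final data perturbed along the exponentially unstable modes, modulation, coercivity of a localized mass--energy--momentum functional modulo the bad directions, and a Brouwer-type argument to select the parameters, followed by Merle's compactness. However, there is a genuine gap in the way you set up the topological step. You take only $N$ real parameters $b_{n,j}$ multiplying $\q Y_j^+$ and then treat the exit map $\vec b_n \mapsto e^{\sigma_0^{3/2}t_*}\vec a^+(t_*)$ as if it were a retraction of the parameter ball onto its boundary sphere. For Brouwer to apply you must know that on the boundary of the parameter ball the exit time is $S_n$ and the map restricts to (a degree-one map of) the sphere; this requires an explicit, quantitatively invertible relation between the coefficients of the perturbation and the \emph{projections} $\g a^\pm(S_n)$ after modulation. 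That relation is not the identity: the modes $Y^+,Y^-$ are not orthogonal to each other nor to the adjoint functionals defining $a_j^\pm$, the modulation shifts the decomposition, and the naive self-pairing of the unstable mode can even vanish (in the gKdV analogue $(LY^+,Y^+)=0$, which is exactly why the paper perturbs along $Z^\pm_j=LY^\pm_j$ there, and along $iY^\pm_j$ for \eqref{nls}). The paper resolves this with $2N$ parameters and a separate ``modulated final data'' lemma (invertibility of a Gramm-type matrix, Lemma \ref{finaldata}) which lets one prescribe $\g a^+(S_n)=\gh a^+$ arbitrarily in a ball and $\g a^-(S_n)=0$; the Brouwer argument is then run directly in the variable $\gh a^+$, where the boundary identity is immediate. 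Without such a lemma (or a verified nondegeneracy of your pairing plus an inversion argument), your retraction claim does not close.

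A secondary omission: the coercivity \eqref{orthoef} controls $\|w\|_{H^1}^2$ only up to \emph{both} families of projections, so your inequality $\|w\|_{H^1}^2\lesssim \sum_j|a_j^+|^2+e^{-2\sigma_0^{3/2}t}$ silently assumes the stable components $a_j^-$ are already of size $e^{-(3/2)\sigma_0^{3/2}t}$; this must be proved separately by integrating the ODE $\frac{d}{dt}a_j^-\approx +e_0c_j^{3/2}a_j^-$ backward from its (zero or exponentially small) value at $S_n$, as in Lemma \ref{le:a-}. Finally, watch the time direction in your transversality statement: with the paper's weighted quantity $\mathcal N(t)=\|e^{(3/2)\sigma_0^{3/2}t}\g a^+(t)\|^2$ one shows $\frac{d}{dt}\mathcal N\le -4e_0\sigma_0^{3/2}<0$ at the exit time (the norm increases as $t$ decreases), which is what yields continuity of $\gh a^+\mapsto T(\gh a^+)$; your stated sign $\frac{d}{dt}|\vec a^+|^2>0$ at the exit is the opposite of what the backward-in-time outgoing condition requires.
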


\begin{nb} 
	The condition on $p$ means that the problem is $L^2$ supercritical but $\dot H^1$ subcritical (for $d\geq 3$).
	In the present paper, we do not treat the $\dot H^1$ critical case -- recall that solitons have then only algebraic decay.
	
	The proof of Theorem \ref{nsolitonnls} is completely similar to the one of Theorem \ref{nsoliton}, see Section \ref{sec:nls}. Note that similarly to the (gKdV) case, we will need eigenfunctions for the linearized operator around $Q$. To obtain these objects for the (NLS) case, we refer to Weinstein \cite{Wei85}, Grillakis \cite{Gri88} and Schlag \cite{Schla06}.
\end{nb}

\bigskip

In Section \ref{sec:out}, we present an outline of the proof of Theorem \ref{nsoliton}.
A complete proof of Theorem \ref{nsoliton} is given in Section \ref{sec:2}. 
Next,  extensions of this result to (gKdV) equations with general nonlinearities are presented without proof in Section \ref{sec:gkdv}.
Finally, a sketch of the proof of Theorem \ref{nsolitonnls} is given in Section \ref{sec:nls}. In the Appendix, we gather the proof of two technical lemmas.

\subsection{Outline of proof of Theorem \ref{nsoliton}}\label{sec:out}

For simplicity, we consider only positive solitons and pure power nonlinearities for (gKdV). The proof follows a similar initial strategy as in the works of Merle \cite{Mer90} or Martel \cite{Mar05}. 

We consider a sequence $S_n \to + \infty$ and we set
\[ R_j(t,x) = R_{c_j,x_j}(t,x), \q\quad R(t,x) = \sum_{j=1}^N R_j(t,x). \]

In the subcritical case (\cite{Mar05} and \cite{MM06}), one considers the sequence $(u_n)$ of solutions to \eqref{gkdv} such that $u_n(S_n) = R(S_n)$.
The goal is then to obtain backwards uniform estimates on $u_n(t) - R(t)$ on some time interval $t \in [T_0,S_n]$, where $T_0$ does not depend on $n$. From these estimates, one can construct the multi-soliton soliton by compactness arguments.
To obtain the uniform estimates, one uses monotonicity properties of local conservation laws and   coercivity property of the Hessian of the energy around a soliton~:
\[ Lv = - v_{xx} - pQ^{p-1} v + v. \]
Indeed, in the subcritical case, it is well-known (see \cite{Wei86}) that $(L v, v) \ge \lambda \| v \|_{H^1}^2$  ($\lambda >0$) provided that  $(v,Q) = (v,Q_x) =0$. These two directions are then controlled by modulation with respect to scaling and translation.

In the supercritical case, one cannot obtain uniform estimates by the same way, since the previous property of $L$ fails. It is known that
$(L \cdot, \cdot)$ is  positive definite up to the directions $Q^{\frac{p+1}{2}}$ and $Q_x$~; the direction $Q_x$ can still be handled using modulation in the translation parameter, but the even direction $Q^{\frac{p+1}{2}}$ cannot be controled  by the scaling parameter as for the subcritical case (this is of course related to the instable nature of the soliton).

At this point, we need the $L^2$ eigenfunctions $Z^\pm$ of the operator $L \partial_x$~:
\[ L(Z^\pm_x) = \pm e_0 Z^\pm, \quad e_0 >0. \]
constructed by Pego and Weinstein \cite{PW92}.
Following  Duyckaerts and Merle \cite{DM07},  we  prove that $(L \cdot, \cdot)$ is positive definite up to the directions $Z^\pm$ and $Q_x$ (see Lemma  \ref{ycoer} in the present paper).
The direction $Z^-$ being in some sense a stable direction, it does not create any  difficulty.
For the instable direction $Z^+$, we do need an extra parameter, which cannot be controlled by a scaling argument.
Thus, instead of considering the final data $u_n(S_n) = R(S_n)$, as in \cite{Mar05}, we look at solutions to \eqref{gkdv} with final data~:
\[ u_n(S_n) = R(S_n) + \sum_{j,\pm} \gh b_{j,n}^\pm Z^\pm_j, \quad \text{where} \quad Z_j(t,x) = c^{\frac{1}{p-1}} Z^\pm(\sqrt{c_j}(x-c_jt - x_j)), \]
and $\gh b_n= (\gh b^\pm_{j,n})_{j=1,\ldots N;\pm}$ belongs to some small neighborhood of $0$ in $\m R^{2N}$. A topological argument then allows us to select, for all $n$,  $\gh b_n$ so that, for the corresponding solution $u_n$, we obtain a uniform control on $\| u_n(t) - R(t) \|_{H^1}$ on some interval $[T_0,S_n]$.

\section{Proof of Theorem \ref{nsoliton}}\label{sec:2}

\subsection{Preliminary results}

Consider the operator 
\[ Lv = - v_{xx} - pQ^{p-1} v+v. \]
For $p>5$, it is known from the work of Pego and Weinstein \cite{PW92} that the operator $\partial_x L$ has two eigenfunctions $Y^+$ and $Y^-$ (related by $Y^-(x) = Y^+(-x)$) such that
\[ (L Y^\pm)_x = \pm e_0 Y^\pm, \text{ where } e_0 >0. \]
In contrast with the (NLS) case (see references in section \ref{sec:nls}), the existence of $Y^\pm$ is not obtained by variational arguments, but by sharp  ODE techniques.
Note that  \cite{PW92} provides a complete description of the spectrum of $\partial_x L$ in $L^2$ for any $p>1$~; in particular, the existence of such eigenfunctions related to $\pm e_0$ with $e_0>0$ is proved to be equivalent to super criticality (i.e. $p>5$ in the present case).

\medskip

Next, we observe that $Z^\pm = LY^\pm$ are eigenfunctions of
$L\partial_x$ (adjoint to $- \partial_x L$). Indeed,
\[ L(Z^\pm_x) = \pm e_0 Z^\pm. \]
The functions $Z^\pm$ are normalized so that $\| Z^\pm \|_{L^2} =1$. Moreover, we recall from \cite{PW92} (standard ODE arguments) that $Z^\pm, Y^\pm \in \q S(\m R)$ and have exponential decay, along with their derivatives. Let $\eta_0>0$ such that
\[ \forall x \in \m R, \qquad |Z^+(x)| + |Z^-(x)| + |Z^+_{x}(x)| + |Z^-_x(x)| \le C e^{-\eta_0 |x|}. \]

Following \cite{DM07} (concerning the (NLS) case), we claim    the following coercivity property of $L$ (for $f,g\in L^2$, $(f,g)=\int fg$ denotes the scalar product in $L^2$).

\begin{lem} \label{ycoer} There exist $\lambda>0$ such that
\[ \forall v \in H^1, \quad (Lv,v) \ge \lambda \| v \|_{H^1}^2 -
\frac{1}{\lambda} \left( (v,Z^+)^2 + (v,Z^-)^2 + (v,Q_x)^2
\right). \]
\end{lem}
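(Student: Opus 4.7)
The plan is to reduce the statement to a strict coercivity $(Lv,v) \ge \mu^* \|v\|_{L^2}^2$ with $\mu^* > 0$ on the codimension-three subspace
\[ V := \{v\in H^1 : (v,Q_x) = (v,Z^+) = (v,Z^-) = 0\}. \]
Once this is known, it upgrades to an $H^1$-coercivity via the identity $(Lv,v) = \|v\|_{H^1}^2 - p\!\int Q^{p-1}v^2$ and a convex combination, and the claimed inequality follows by orthogonally decomposing any $v\in H^1$ into an element of $V$ plus a linear combination of $Q_x,Z^+,Z^-$ whose coefficients are controlled by the three scalar products appearing on the right-hand side.

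The preliminary ingredient is the standard spectral picture of $L = -\partial_x^2 + 1 - pQ^{p-1}$ on $\m R$: Sturm--Liouville theory (cf.\ \cite{Wei86}) shows that $L$ has exactly one simple negative eigenvalue $-\mu_0 < 0$ with an even positive Schwartz ground state $\chi_0$, a one-dimensional kernel spanned by the odd function $Q_x$, and essential spectrum $[1,\infty)$. Spectral decomposition then gives the classical coercivity $(Lw,w) \ge \lambda_0\|w\|_{H^1}^2$ on $\{\chi_0,Q_x\}^\perp$. The whole difficulty is to replace the orthogonality $w\perp\chi_0$ by the orthogonalities to $Z^\pm$.

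The algebraic heart of the proof is the non-degeneracy $(\chi_0, Z^+) \ne 0$. Integrating $(LY^\pm)_x = \pm e_0 Y^\pm$ and using that $Z^\pm = LY^\pm$ decay at infinity gives $\int Y^\pm = 0$ and, by a direct primitive computation, the diagonal vanishing $(LY^\pm, Y^\pm) = 0$. Hence the matrix of the symmetric form $(L\cdot,\cdot)$ on $\mathrm{span}(Y^+, Y^-)$ is purely off-diagonal, equal to $\bigl(\begin{smallmatrix}0 & \beta\\ \beta & 0\end{smallmatrix}\bigr)$ with $\beta := (LY^+, Y^-)$, and $\beta \ne 0$ by the simplicity of the eigenvalue $e_0$ of $\partial_x L$ (\cite{PW92}), via the biorthogonality of its right eigenvector $Y^+$ and its left eigenvector $Z^-$. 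The one-dimensional negative direction of $L$ on this $2$-dimensional subspace is thus either the even vector $Y^++Y^-$ or the odd vector $Y^+-Y^-$; the odd one is ruled out, for such a vector would decompose as $cQ_x + w$ with $w\in\{\chi_0,Q_x\}^\perp$, on which $L$ is nonnegative by the Weinstein bound, contradicting negativity. So the negative direction is the even one, which by the same argument cannot lie in $\{\chi_0,Q_x\}^\perp$, forcing $(\chi_0, Y^++Y^-) = 2(\chi_0, Y^+) \ne 0$, and hence $(\chi_0,Z^+) = -\mu_0(\chi_0, Y^+) \ne 0$; tracing the signs of the above dichotomy further yields $\beta < 0$.

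Finally, suppose by contradiction that $\mu^* := \inf_{v\in V,\,\|v\|_{L^2}=1}(Lv,v) \le 0$. By weak-compactness (multiplication by $Q^{p-1}$ is $L^2$-compact due to the exponential decay of $Q$) a minimiser $v^* \in V$ exists; the spectral lower bound gives $\mu^* \ge -\mu_0$ strictly, since equality would force $v^*$ to be proportional to $\chi_0$, contradicting $(\chi_0,Z^+) \ne 0$. Thus $\mu^* \in (-\mu_0, 0]$ lies in the resolvent set of $L$ on $Q_x^\perp$, and the Euler--Lagrange equation $(L-\mu^*)v^* = \gamma_+ Z^+ + \gamma_- Z^-$ inverts to $v^* = \gamma_+ (L-\mu^*)^{-1}Z^+ + \gamma_-(L-\mu^*)^{-1}Z^-$. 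Imposing $(v^*, Z^\pm) = 0$ produces a symmetric $2\times 2$ homogeneous linear system in $(\gamma_+, \gamma_-)$ which, after decomposition via the even/odd combinations $Z_{e/o} := Z^+ \pm Z^-$, reduces the compatibility condition to $E_e(\mu^*)\,E_o(\mu^*) = 0$, where $E_{e/o}(\mu) := ((L-\mu)^{-1}Z_{e/o}, Z_{e/o})$. Both $E_e,E_o$ are non-decreasing on $(-\mu_0, 0]$ (their derivatives are $\|(L-\mu)^{-1}Z_{e/o}\|_{L^2}^2 \ge 0$); at $\mu = 0$, the preceding algebra combined with $L^{-1}Z^\pm \equiv Y^\pm \bmod \ker L$ yields $E_e(0) = 2\beta < 0$ and $E_o(0) = -2\beta > 0$; and as $\mu\to(-\mu_0)^+$ the resonance with $\chi_0$ sends $E_e$ to $-\infty$ (because $(\chi_0,Z_e) = 2(\chi_0,Z^+)\ne 0$) while $E_o$ avoids the resonance ($\chi_0\perp Z_o$ by parity) and remains positive throughout $[-\mu_0,0]$. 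Neither function vanishes on $(-\mu_0, 0]$, contradicting the compatibility and proving $\mu^* > 0$. The main obstacle is the interlocking spectral algebra yielding both the non-degeneracy $(\chi_0,Z^+) \ne 0$ and the sign $\beta < 0$; once these are secured, the Euler--Lagrange and resolvent-monotonicity analysis runs mechanically.
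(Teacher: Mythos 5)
Your argument is correct in substance, but it proves the lemma by a genuinely different route than the paper. The paper never introduces the ground-state pairing $(\chi_0,Z^+)$ nor a constrained minimizer: it decomposes $v$ and $Y^\pm$ orthogonally along $\Span(Q_x,Q^{\frac{p+1}{2}})$, combines $(LY^\pm,Y^\pm)=0$ with the constraints $(v,Z^\pm)=0$ to obtain an exact identity expressing $(Lv,v)$ as $(Lw,w)$ minus a product of the couplings $(Lw,y^\pm)$, and concludes by a Cauchy--Schwarz argument (the supremum $a<1$), which only needs the linear independence of the projections $y^\pm$, itself proved by an elementary parity computation; this is direct, quantitative, avoids any compactness or contradiction step, and uses nothing from \cite{PW92} beyond existence and decay of $Y^\pm$. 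You instead first derive the sharper spectral facts $\beta=(LY^+,Y^-)<0$ and $(\chi_0,Z^+)\ne 0$, and then run a Weinstein-type constrained minimization with Euler--Lagrange equation, inversion of $L-\mu^*$ on $Q_x^\perp$, and monotonicity/sign analysis of $E_{e/o}(\mu)=((L-\mu)^{-1}Z_{e/o},Z_{e/o})$; this yields extra information of independent interest (the sign of $\beta$, the nondegenerate coupling of $Z^+$ to the ground state), at the price of more machinery and of a few inputs you should make explicit: (i) your proof of $\beta\ne 0$ quotes the algebraic simplicity of $e_0$ from \cite{PW92} (alternatively, if $\beta=0$ the form $(L\cdot,\cdot)$ vanishes identically on the two-dimensional $\Span(Y^+,Y^-)$; intersecting with $\chi_0^\perp$, where the form is nonnegative with null space $\Span(Q_x)$, forces $Q_x\in\Span(Y^+,Y^-)$, which is impossible after applying $\partial_x L$); (ii) $\int Z^\pm Q_x=0$ (immediate from $Z^\pm=LY^\pm$, $LQ_x=0$ and self-adjointness), which is what allows you to drop the $Q_x$ Lagrange multiplier and to define $(L-\mu)^{-1}Z_{e/o}$ on $Q_x^\perp$; (iii) the existence of the minimizer requires the usual normalization/non-vanishing check, since the $L^2$ sphere is not weakly closed (no loss occurs precisely because you assume $\mu^*\le 0$); (iv) $(\gamma_+,\gamma_-)\ne(0,0)$ because $L-\mu^*$ is injective on $Q_x^\perp$, so the vanishing of the determinant, i.e.\ $E_e(\mu^*)E_o(\mu^*)=0$, is indeed forced. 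With these details supplied your proof closes; note also that by monotonicity $E_e\le E_e(0)=2\beta<0$ on $(-\mu_0,0]$, so the resonance limit $E_e\to-\infty$ is not needed (though $(\chi_0,Z^+)\ne0$ is still needed to exclude $\mu^*=-\mu_0$).
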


\begin{proof} The proof is completely  similar to the one of \cite[Lemma 5.2]{DM07}. It is given here for the reader's convenience.
	
First we recall  the following well-known result.

\medskip

\begin{claim} There exists $\nu>0$ such that 
\begin{equation}\label{cl5}
\forall v \in H^1, \quad (Lv,v) \ge \nu \| v \|_{H^1}^2 -
\frac{1}{\nu} \left( (v,Q^{\frac{p+1}{2}})^2 + (v,Q_x)^2
\right). 
\end{equation}
\end{claim}
Indeed, $Q_x$ and $Q^{\frac{p+1}{2}}$ are two eigenfunctions for
$L$, namely
\[ LQ_x =0 \quad \text{ and } \quad LQ^{\frac{p+1}{2}} = \mu_0 Q^{\frac{p+1}{2}} \text{, where } \mu_0 = - (p^2+1). \]
The claim then follows from Sturm-Liouville theory.

\medskip

To prove the Lemma, it suffices to show that 
\begin{equation}\label{red}
 \text{if } (v,Z^+) = (v,Z^-) = (v,Q_x) =0 \text{ then } (Lv,v) \ge
\lambda \| v \|_{H^1}^2. 
\end{equation}
Let $v$ satisfy the orthogonality conditions in \eqref{red} and 
decompose the functions $v$, $Y^{\pm}$ orthogonaly in $\Span(Q_x, Q^{\frac{p+1}{2}})^\perp$ and $\Span(Q_x, Q^{\frac{p+1}{2}})$ 
\[ v = w + \alpha Q^{\frac{p+1}{2}}, \quad Y^+ = y^+ + \beta Q_x + \gamma
Q^{\frac{p+1}{2}}, \quad Y^- = y^- + \delta Q_x + \eta
Q^{\frac{p+1}{2}}. \]
By symmetry and uniqueness of the orthogonal decomposition, note that $\delta =
- \beta$, $\eta = \gamma$ and $y^+(-x) = y^-(x)$.

First, we claim that the functions $y^+$, $y^-$ are linearly independent. 
Indeed, decompose into even and odd parts
\[ y^+ = y^{e} + y^{o}, \quad y^{-} = y^{e} - y^{o}. \]
Let us prove that $y^e \ne 0$ and $y^o \ne 0$~; we observe from $(L Y^+)_x = e_0 Y^+$ that
\[ (Ly^e)_x = e_0 (y^o + \beta Q_x) - \mu_0 \gamma (Q^{\frac {p+1}2})_x, \quad (Ly^o)_x = e_0
  (y^e + \gamma Q^{\frac{p+1}{2}} ). \]
If $y^o =0$, then $y^e = 0$ and $\gamma =0$, hence $\beta
=0$, and thus $Y^+ = Y^- =0$, which is a contradiction. 
Now, if we assume $y^e=0$, by $(Ly^o)_x = e_0
  (y^e + \gamma Q^{\frac{p+1}{2}} )$ and $\int Q^{\frac {p+1}2}\neq 0$, we obtain $\gamma=0$.
Thus, from $0=(Ly^e)_x = e_0 (y^o + \beta Q_x)$, we get $y^o=0$ and $\beta=0$, so that $Y^+=Y^-=0$, a contradiction.
From the property $y^e \ne 0$ and $y^o \ne 0$,  one
deduces that $a y^+ + by^- =0$ implies $a=b=0$, hence $y^+$ and $y^-$ are linearly independent.

We now go back to the proof of coercivity. Note that
\[ (LY^\pm, Y^\pm) = \pm e_0^{-1} (LY^\pm, (LY^\pm)_x) =0. \]
We compute 
\begin{align*}
0 & = (v, Z^+) = (v, LY^+) = (Lv, Y^+) = (Lw, y^+) + \alpha \mu_0
\gamma \| Q^{\frac{p+1}{2}} \|_{L^2}^2, \\
0 & = (v,Z^-)  = (v, LY^-) = (Lv, Y^-) = (Lw, y^-) + \alpha \mu_0
\gamma \| Q^{\frac{p+1}{2}} \|_{L^2}^2, \\
0 & = (LY^+, Y^+) = (Ly^+, y^+) + \gamma^2 \mu_0 \| Q^{\frac{p+1}{2}}
\|_{L^2}^2, \\ 
0 & = (LY^-, Y^-) = (Ly^-, y^-) + \gamma^2 \mu_0 \| Q^{\frac{p+1}{2}}
\|_{L^2}^2. 
\end{align*}
Hence
\begin{equation} \label{coerdemo}
(Lv,v) = (Lw,w) + \mu_0 \alpha^2 \| Q^{\frac{p+1}{2}} \|_{L^2}^2 = 
(Lw,w) - \frac{(Lw,y^+)(Lw,y^-)}{\sqrt{(Ly^+,y^+)}
  \sqrt{(Lz^-,z^-)}}.
\end{equation} 

Consider 
\[ a = \sup_{\omega \in \Span(y^+,y^-) \setminus \{ 0 \}}
\left|\frac{(L\omega,y^+)}{\sqrt{(L\omega,\omega) (Ly^+,y^+)}} \cdot
\frac{(L\omega,y^-)}{\sqrt{(L\omega,\omega) (Ly^-,y^-)}} \right| . \]
Recall $(L \cdot, \cdot)$ is  positive definite on $\Span(Q_x,
Q^{\frac{p+1}{2}})^\perp$~; applying Cauchy-Schwarz 
inequality to each of the two terms of the product above, we find $a \le 1$. Furthermore,
if $a=1$, there exists $\omega \ne 0$
such that these two   Cauchy-Schwarz inequalities are actually equalities, but this is not possible since $y^+$ and $y^-$ are independent.

Therefore, we have proved that $a <1$. By decomposition on  $\Span(Q_x, Q^{\frac{p+1}{2}})^\perp$, we also obtain for all 
$\omega \in \Span(Q_x, Q^{\frac{p+1}{2}})^\perp$,
$$
\left| \frac{(L\omega ,y^+)(L\omega,y^-)}{\sqrt{(Ly^+,y^+)}
  \sqrt{(Lz^-,z^-)}}	\right| \leq a (L\omega, \omega).
$$
Hence, by \eqref{coerdemo} and next   \eqref{cl5},
\[  (Lv,v) \ge (1-a) (Lw,w)\ge \nu (1-a) \| w \|_{H^1}^2 >0. \]
Thus, for $C = \max ( \frac{4}{\nu (1-a)}, \frac{4}{(1-a) |\mu_0|}  \| Q^{\frac{p+1}{2}} \|_{L^2}^{-2} \| Q^{\frac{p+1}{2}} \|_{H^1}^2)$ we get 
\begin{align*}
 C (Lv,v) & \ge C (1-a) (Lw,w) \ge C \frac{1-a}{2} (Lw,w) + C \frac{1-a}{2} |\mu_0| \alpha^2 \| Q^{\frac{p+1}{2}} \|_{L^2}^2 \\
& \ge 2 \| w \|_{H^1}^2 + 2 \alpha^2 \| Q^{\frac{p+1}{2}} \|_{H^1}^2 
 \ge \| w + \alpha Q^{\frac{p+1}{2}} \|_{H^1}^2  = \| v \|_{H^1}^2. \qedhere
\end{align*}
\end{proof}

\subsection{Main Proposition and proof of Theorem \ref{nsoliton}}

We denote
\begin{equation}\label{defR}\begin{split}
&	R_j(t,x) = R_{c_j,x_j}(t,x)=c_j^{\frac 1 {p-1}} Q(\sqrt{c_j}(x-c_j t -x_j)),\quad 
\quad R(t,x) = \sum_{j=1}^N R_j(t,x), \\
&	Z^\pm_j(t,x) = c_j^{\frac{1}{p-1}} Z^\pm(\sqrt{c_j}(x-c_jt - x_j)). 
\end{split}\end{equation}
Let $S_n \to \infty$ be a increasing sequence of time, $\gh b_n =(\gh b_{j,n}^\pm)_{j,\pm} \in \m R^{2N}$ be a sequence of parameters to be determined, and let $u_n$ be the solution to
\begin{equation} 
\label{gkdvSn}
\left\{ \begin{array}{>{\displaystyle}l}
{u_n}_t + ({u_n}_{xx} + u_n^{p})_x =0, \vphantom{\sum_{j \in \{ 1,\ldots, N\}, \pm}} \\
u_n(S_n) = R(S_n) + \sum_{j \in \{ 1,\ldots, N\}, \pm} \gh b_{j,n}^\pm Z_j^\pm(S_n).
\end{array} \right.
\end{equation}

Let  
\begin{equation} \label{sigma0}
\sigma_0 = \frac 1 4 \min \left\{ \eta_0, e_0^{2/3}c_1, c_1,c_2-c_1, \ldots, c_N-c_{N-1} \right\}.
\end{equation}

\begin{prop} \label{vndecay}
There exist $n_0\geq 0$,  $T_0>0$ and $C>0$  (independent of $n$) such that the following holds. 
For each $n\geq n_0$, there exists $\gh b_n = (\gh b_{j,n}^\pm)_{j,\pm} \in \m R^{2N}$ with 
\[ \left( \sum_{j,\pm} {\gh b^\pm_{j,n}}^2 \right)^{1/2} \leq e^{-\sigma_0^{3/2} S_n}, \] 
and such that the solution $u_n$ to \eqref{gkdvSn} is defined on the interval $[T_0, S_n]$, and satisfies
\[ \forall t \in [T_0, S_n], \quad \left\| u_n(t) - R(t) \right\|_{H^1} \le C e^{-\sigma_0^{3/2} t}. \]
\end{prop}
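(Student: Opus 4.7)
The plan is to prove Proposition \ref{vndecay} by a backward bootstrap on $[T_0, S_n]$ combined with a Brouwer-type topological shooting on the final-time unstable modes $(\gh b_{j,n}^+)_j$; I would set $\gh b_{j,n}^- = 0$ throughout, so only $\gh b_n^+ := (\gh b_{j,n}^+)_j$ is to be selected, ranging over the closed ball $\overline B := \overline B_{\m R^N}(0, e^{-\sigma_0^{3/2} S_n})$.

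For each $\gh b_n^+ \in \overline B$, as long as $u_n(t)$ stays close to $R(t)$, the implicit function theorem yields a modulation decomposition
$$u_n(t,x) = \sum_{j=1}^N \tilde R_j(t,x) + w(t,x), \qquad \tilde R_j(t,x) = c_j^{1/(p-1)} Q\bigl(\sqrt{c_j}(x - y_j(t))\bigr),$$
with shifts $y_j(t)$ fixed by the orthogonality $(w(t), \partial_x \tilde R_j(t)) = 0$, killing the zero-mode $Q_x$ of the Hessian. I would next define the scalar projections on the eigendirections of $L \partial_x$:
$$a_j^\pm(t) = \bigl(w(t), \tilde Z_j^\pm(t)\bigr), \qquad \tilde Z_j^\pm(t,x) = c_j^{1/(p-1)} Z^\pm\bigl(\sqrt{c_j}(x - y_j(t))\bigr).$$
Using the equation on $w$ and the rescaled eigenrelation $L_{c_j}\partial_x \tilde Z_j^\pm = \pm e_0 c_j^{3/2} \tilde Z_j^\pm$, a direct calculation produces the scalar ODEs
$$\dot a_j^\pm(t) = \mp e_0 c_j^{3/2} a_j^\pm(t) + O\bigl(\|w(t)\|_{H^1}^2 + e^{-3\sigma_0^{3/2} t}\bigr),$$
the error absorbing nonlinear quadratic terms, modulation contributions, and exponentially small interactions between distinct solitons. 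At $t = S_n$, modulation of the final data gives $a_j^+(S_n) = \kappa_j \gh b_{j,n}^+ + O(|\gh b_n^+|^2)$, $a_j^-(S_n) = O(|\gh b_n^+|^2)$ and $\|w(S_n)\|_{H^1} = O(|\gh b_n^+|)$, for some fixed constants $\kappa_j > 0$.

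Let $[T^*, S_n]$ be the maximal subinterval on which the bootstrap assumptions
$$\|w(t)\|_{H^1} \le K e^{-\sigma_0^{3/2} t}, \qquad \sum_j a_j^-(t)^2 \le K^2 e^{-2\sigma_0^{3/2} t}, \qquad \sum_j a_j^+(t)^2 \le e^{-2\sigma_0^{3/2} t}$$
hold, with $K \ge 1$ a large fixed constant. To improve the first bound, I would build a Martel--Merle-type localized Lyapunov functional
$$\q F(t) = E(u_n(t)) + \sum_{j=2}^N \tfrac{c_j}{2} \int u_n^2(t)\,\psi_j(t,x)\,dx,$$
with $\psi_j$ a smooth cutoff transported at the mean speed $(c_{j-1}+c_j)/2$. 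Standard monotonicity calculations on \eqref{gkdv} yield $\q F(t) \le \q F(S_n) + C e^{-3\sigma_0^{3/2} t}$, and Taylor expansion around $\tilde R(t) := \sum_j \tilde R_j(t)$ gives
$$\q F(t) - \q F\bigl(\tilde R(t)\bigr) = \tfrac 12 \sum_j (L_{c_j} w, w)_{\psi_j} + O(\|w\|_{H^1}^3),$$
the linear term in $w$ vanishing since each $R_j$ is a critical point of the localized $E + \tfrac{c_j}{2} M$. Applying the localized version of Lemma \ref{ycoer} (with the $Q_x$ direction already removed by modulation),
$$\lambda \|w(t)\|_{H^1}^2 \le C\Bigl(\|w(S_n)\|_{H^1}^2 + \sum_{j,\pm} a_j^\pm(t)^2 + e^{-3\sigma_0^{3/2} t}\Bigr),$$
which for $K$ and $T_0$ large enough gives the strict improvement $\|w(t)\|_{H^1}^2 \le (K^2/4)e^{-2\sigma_0^{3/2} t}$. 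The bound on $a_j^-$ is improved by Gronwall on its ODE integrated backward from $a_j^-(S_n) = O(|\gh b_n^+|^2)$, using $e_0 c_j^{3/2} \ge 8 \sigma_0^{3/2}$ from \eqref{sigma0}, giving $\sum_j a_j^-(t)^2 \le (K^2/4) e^{-2\sigma_0^{3/2} t}$.

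Therefore only the bound on $\sum_j a_j^+(t)^2$ can saturate on $[T^*, S_n]$. Assume by contradiction that for every $\gh b_n^+ \in \overline B$ one has $T^*(\gh b_n^+) > T_0$, so $\sum_j a_j^+(T^*)^2 = e^{-2\sigma_0^{3/2} T^*}$. The transversality
$$\tfrac{d}{dt}\Bigl[e^{2\sigma_0^{3/2} t}\sum_j a_j^+(t)^2\Bigr]_{t=T^*} = 2 e^{2\sigma_0^{3/2} T^*} \sum_j \bigl(\sigma_0^{3/2} - e_0 c_j^{3/2}\bigr) a_j^+(T^*)^2 + o(1) < 0$$
implies, via the implicit function theorem, that $T^*(\gh b_n^+)$ depends continuously on $\gh b_n^+$, and hence so does
$$\Phi : \overline B \longrightarrow \partial \overline B, \qquad \Phi(\gh b_n^+) = e^{\sigma_0^{3/2}(S_n - T^*(\gh b_n^+))}\bigl(a_j^+(T^*)/\kappa_j\bigr)_j.$$
On $\partial \overline B$ one has $T^* = S_n$ and $a_j^+(S_n)/\kappa_j = \gh b_{j,n}^+ + O(|\gh b_n^+|^2)$, so $\Phi|_{\partial \overline B}$ is a $C^0$-perturbation of the identity, hence homotopic to $\Id_{\partial \overline B}$. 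This yields a continuous retraction of $\overline B$ onto its boundary, contradicting Brouwer's no-retraction theorem. Consequently, some $\gh b_n^+ \in \overline B$ achieves $T^*(\gh b_n^+) = T_0$ and, undoing the modulation (whose parameters are $O(\|w\|_{H^1})$), proves the proposition. The main technical difficulty I anticipate is constructing the localized Lyapunov functional $\q F$ with errors sharp enough that the supercritical coercivity of Lemma \ref{ycoer} --- which leaves both $Z_j^+$ and $Z_j^-$ uncontrolled, not merely $Q_x$ as in the subcritical setting --- suffices to close the bootstrap on $\|w\|_{H^1}$.
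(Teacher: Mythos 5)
Your overall architecture is the same as the paper's: modulate to kill the $Q_x$ directions, project on the eigenmodes $a_j^\pm(t)=(w,\tilde Z_j^\pm)$, close the $H^1$ bound by a localized energy functional plus the coercivity of Lemma \ref{ycoer}, integrate the backward-stable mode $a^-$ from the final time, and select the backward-unstable mode $a^+$ by a Brouwer no-retraction argument. The genuine gap is in your preparation of the final data. You set $\gh b^-=0$ and claim $a_j^-(S_n)=O(|\gh b_n^+|^2)$ and $a_j^+(S_n)=\kappa_j\gh b_{j,n}^++O(|\gh b_n^+|^2)$. But $Z^+$ and $Z^-$ are \emph{not} $L^2$-orthogonal (indeed $Z^-(x)=Z^+(-x)$ and $\int Z^+Z^-\ne 0$ in general; this is why the matrix in the paper's Lemma \ref{finaldata} has the off-diagonal blocks $(\int Z^+Z^-)A$). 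Hence with $\gh b^-=0$ one only gets $a_j^-(S_n)=\bigl(\int Z^+Z^-\bigr)\kappa_j\gh b_{j,n}^++O(e^{-cS_n}|\gh b_n^+|)+O(|\gh b_n^+|^2)$, i.e.\ of size $|\gh b_n^+|\sim e^{-\sigma_0^{3/2}S_n}$, not quadratic. Consequently your claimed strict improvement $\sum_j a_j^-(t)^2\le (K^2/4)e^{-2\sigma_0^{3/2}t}$ is false near $t=S_n$: the homogeneous (backward-decaying) part of $a^-$ alone is of size $e^{-\sigma_0^{3/2}S_n}$, which is far larger than $e^{-2\sigma_0^{3/2}S_n}$. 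This is precisely what the paper's Lemma \ref{finaldata} is for: one must choose the full $\gh b\in\m R^{2N}$ (both components) so that the \emph{modulated} projections satisfy $\g a^-(S_n)=0$ exactly and $\g a^+(S_n)=\gh a^+$ is the shooting parameter; invertibility of the Gram-type matrix of $(Z_j^\pm)$ (which holds because $Z^+,Z^-$ are merely independent, not orthogonal) is what makes this possible.

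The same omission undermines your topological step. Your exit map $\Phi$ is built on the claim that on the boundary sphere $|\gh b_n^+|=e^{-\sigma_0^{3/2}S_n}$ one has $T^*=S_n$ and $\Phi$ is a small perturbation of the identity. But your bootstrap constraint is on $\sum_j a_j^+(t)^2$, while your shooting variable is $\gh b_n^+$; since $a_j^+(S_n)\approx\kappa_j\gh b_{j,n}^+$ with $\kappa_j=\|Z_j^+\|_{L^2}^2\ne 1$ (and $j$-dependent), points of the $\gh b^+$-sphere need not saturate the $a^+$-constraint at $S_n$ (if $\kappa_j<1$), or may already violate the bootstrap at $t=S_n$ (if $\kappa_j>1$), so the "identity on the boundary" property is not justified. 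The paper avoids this by shooting directly in $\gh a^+=\g a^+(S_n)$ over the ball $B_{\m R^N}(e^{-(3/2)\sigma_0^{3/2}S_n})$, for which the boundary fixed-point property is immediate from the definition of $T(\gh a^+)$. Two further, more minor, points: the paper runs the $\g a^\pm$ bootstrap at the strictly faster rate $e^{-(3/2)\sigma_0^{3/2}t}$ than the rate $e^{-\sigma_0^{3/2}t}$ used for $v$, which decouples the constants when feeding $\sum_{j,\pm}(a_j^\pm)^2$ into the coercivity estimate; with your equal-rate choice you must improve $a^-$ first with a $K$-independent constant and track the $K$-dependence carefully, which you do not do. Also, your monotonicity statement for $\q F$ drops the error term $Ct^{-1/2}\|w(t)\|_{H^1}^2$, whose time integral $\int_t^{S_n}\|w(s)\|_{H^1}^2\,ds/\sqrt{s}$ must appear (and is then absorbed for $T_0$ large); this is fixable but should be stated.
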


Assuming this Proposition, we now deduce the proof of Theorem \ref{nsoliton}. The proof of Proposition \ref{vndecay} is postponed to Section \ref{sec:2.3}.

\begin{proof}[Proof of Theorem \ref{nsoliton} assuming Proposition \ref{vndecay}]
It follows closely the proof of Theorem 1 in  \cite{Mar05}. We may assume $n_0=0$ in Proposition \ref{vndecay} without loss of generality.

\medskip

\noindent\emph{Step 1 : Compactness argument.} From Proposition
\ref{vndecay}, there exists a sequence $u_n(t)$ of solutions to (\ref{gkdv}),
defined on $[T_0, S_n]$ and $C_0, \sigma_0 >0$ such that the following uniform estimates hold~:
\begin{equation}\label{truc}
	\forall n \in \m N, \ \forall t \in [T_0,S_n], \qquad \| u_n (t) - R_n(t) \|_{H^1} \le C_0 e^{-\sigma_0^{3/2} t}.
\end{equation}

We claim the following compactness result on the sequence $u_n(T_0)$.
\begin{claim}
\[ \lim_{A \to \infty} \sup_{n \in \m N} \int_{|x| \ge A}  u_n^2(T_0,x) dx
= 0. \]
\end{claim}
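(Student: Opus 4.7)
The plan is to establish the uniform $L^2$-tightness of $(u_n(T_0))_n$ by combining the Kato-type $L^2$-monotonicity for \eqref{gkdv} developed in \cite{MM01b} with $L^2$-mass conservation and the explicit exponential localization of the final datum $u_n(S_n) = R(S_n) + \sum_{j,\pm} \gh b_{j,n}^\pm Z_j^\pm(S_n)$. I will split $\int_{|x| \ge A} u_n^2(T_0) = \int_{x \le -A} u_n^2(T_0) + \int_{x \ge A} u_n^2(T_0)$ and handle each tail with a moving-frame cutoff that avoids every soliton trajectory $t \mapsto c_j t + x_j$ on $[T_0, S_n]$. The standard ingredient is a smooth non-decreasing $\psi: \m R \to [0,1]$ with $\psi \equiv 0$ on $(-\infty, -1]$, $\psi \equiv 1$ on $[1, \infty)$, and $|\psi'''| \le \kappa \psi'$: for a frame $y(t) = y_0 + \sigma(t - T_0)$ with $\sigma > 0$, integration by parts on \eqref{gkdv} yields
\[
\frac{d}{dt}\int u_n^2(t)\, \psi(\cdot - y(t))\, dx = -3 \int (u_n)_x^2 \psi' - \sigma \int u_n^2 \psi' + \int u_n^2 \psi''' + \frac{2p}{p+1} \int u_n^{p+1} \psi',
\]
and when $y(t)$ stays at positive distance from every soliton trajectory, the decay of $Q$ together with the bound $\|u_n(t) - R(t)\|_{H^1} \le C_0 e^{-\sigma_0^{3/2} t}$ (plus Sobolev) makes $u_n$ exponentially small in $L^\infty$ on $\mathrm{supp}\, \psi'(\cdot - y(t))$, absorbing the last two terms. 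Hence $\int u_n^2\, \psi(\cdot - y(t))$ is almost decreasing in $t$ and, by mass conservation, $\int u_n^2 (1 - \psi(\cdot - y(t)))$ is almost increasing.

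For the left tail I choose $y(t) = -A + \sigma_L(t - T_0)$ with $\sigma_L \in (0, c_1)$, so the frame stays strictly to the left of every soliton on $[T_0, S_n]$. The almost-increase of $\int u_n^2(1 - \psi(\cdot - y(t)))$ from $T_0$ to $S_n$, together with the fact that the centers $c_j S_n + x_j$ of the exponentially localized constituents of $u_n(S_n)$ all lie at distance $\ge (c_1 - \sigma_L) S_n + A$ to the right of $y(S_n)$, yields $\int_{x \le -A} u_n^2(T_0) \le C e^{-\eta A}$ uniformly in $n$. For the right tail I would symmetrically introduce a second frame $y_R(t) = A + \sigma_R(t - T_0)$ with $\sigma_R > c_N$ staying strictly to the right of every soliton, and combine the associated monotonicity with the left-tail one and with $L^2$-mass conservation $\int u_n^2(T_0) = \int u_n^2(S_n)$, squeezing $\int_{x \ge A} u_n^2(T_0)$ between the (controlled) left-tail mass and the (conserved) total mass to obtain the parallel bound $\int_{x \ge A} u_n^2(T_0) \le C e^{-\eta A}$ uniformly in $n$.

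The main obstacle is the right-tail bound: the $L^2$-monotonicity for \eqref{gkdv} is intrinsically one-sided, reflecting the one-sided dispersion of the equation, so the direct monotonicity on the right-capturing weight $\psi(\cdot - y_R(t))$ yields only a \emph{lower} bound on $\int_{x \ge A} u_n^2(T_0)$. The upper bound has to be extracted indirectly by combining the two moving-frame monotonicities with mass conservation, with careful bookkeeping of the exponentially small errors coming from the $\int u_n^{p+1} \psi'$ and $\int u_n^2 \psi'''$ contributions — a combined argument that is standard in the Martel-Merle multi-soliton literature \cite{Mar05, MM06, MM01b}.
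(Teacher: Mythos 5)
Your left-tail argument is the standard Martel--Merle monotonicity computation and is essentially correct (the slow rightward frame stays at a linearly growing distance from every soliton, the nonlinear and $\psi'''$ terms are absorbed or exponentially small, and the final data \eqref{gkdvSn} has exponentially small mass to the left of the frame at $t=S_n$). The right tail, however, is a genuine gap, not bookkeeping. Kato-type monotonicity for \eqref{gkdv} transports \emph{upper} bounds on the mass to the left of a rightward-moving line backward in time, and upper bounds on the mass to the right of such a line only \emph{forward} in time; the claim needs an upper bound on the mass to the right of $x=A$ at the \emph{earlier} time $T_0$, which is exactly the unavailable direction. Your proposed squeeze is circular: writing $m_L,m_M,m_R$ for the masses of $u_n(T_0)$ to the left of $-A$, in between, and to the right of $A$, mass conservation gives $m_R=\int u_n^2(T_0)-m_L-m_M$, and your two monotonicities give only ``$m_L$ small'' and a \emph{lower} bound on $m_R$; to conclude you would need $m_M\ge \int u_n^2(T_0)-\varepsilon$, i.e.\ precisely the right-tail estimate you are trying to prove. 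A stationary or leftward-moving cutoff near $x=A$ does not repair this over $[T_0,S_n]$: the term $+\sigma\int u_n^2\psi'$ then has the unfavorable sign, and for fixed $A$ the solitons overtake any such line once $S_n$ is large, so the solution is not small near it uniformly on $[T_0,S_n]$.

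The paper's proof uses no monotonicity at all and is two-sided by construction. Given $\varepsilon>0$, pick $T(\varepsilon)\ge T_0$, independent of $n$, with $C_0e^{-\sigma_0^{3/2}T(\varepsilon)}\le\sqrt{\varepsilon}$; by the uniform estimate \eqref{truc}, $u_n(T(\varepsilon))$ is $\sqrt{\varepsilon}$-close to $R(T(\varepsilon))$ in $L^2$, so its mass in $\{|x|\ge A(\varepsilon)\}$ is at most $4\varepsilon$ for all $n$ with $S_n\ge T(\varepsilon)$. Then one takes a \emph{fixed} cutoff $g\bigl((x-A(\varepsilon))/C(\varepsilon)\bigr)$ with a large flattening parameter $C(\varepsilon)$: Kato's identity and the uniform $H^1$ bound give $\bigl|\frac{d}{dt}\int u_n^2\, g\bigr|\le C/C(\varepsilon)$ with no sign considerations, so choosing $C(\varepsilon)\sim (T(\varepsilon)-T_0)/\varepsilon$ the localized mass changes by at most $\varepsilon$ on the bounded interval $[T_0,T(\varepsilon)]$, whence $\int_{x\ge 2C(\varepsilon)+A(\varepsilon)}u_n^2(T_0)\le 5\varepsilon$, and the mirror cutoff handles the left side identically. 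The two points your scheme misses are: (i) the propagation is done only over a time interval whose length is independent of $n$ and of $A$ (from $T_0$ to the fixed time $T(\varepsilon)$ at which \eqref{truc} already localizes the solution), and (ii) the smallness of the flux comes from flattening the cutoff, not from a sign, which is why both tails come out symmetrically. Replacing your right-tail step by this bounded-interval flat-cutoff argument repairs the proof (and then the moving-frame left-tail estimate becomes unnecessary); note also that the claim only requires an $\varepsilon$-statement, not the uniform rate $Ce^{-\eta A}$ you assert.
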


\begin{proof}
Let $\e >0$,  $T(\e)\geq T_0$ be such that $C_0 e^{-\sigma_0^{3/2} T(\e)} \le
\sqrt{\e}$ and $n$ large enough so that $S_n\geq T(\e)$. Then
\[ \int (u_n(T(\e)) - R(T(\e))^2 \le \e. \]
Let $A(\e)$ be such that $\int_{|x| \ge A(\e)}
R(T(\e))^2(x) dx \le \e$ ; we get
\[ \int_{|x| \ge A(\e)} u_n^2(T(\e),x) dx \le  4 \e. \]
Let $g(x) \in C^3$ be such that  $g(x) =0$ if $x \le 0$, $g(x)
=1 $ if $x \ge 2$, and furthermore $0 \le g'(x) \le 1$,  $0 \le
g'''(x) \le 1$.

Recall that for $f(x) \in C^3$, we have (Kato's identity \cite{Kat83})
\[\frac{d}{dt} \int u_n^2 f =  - 3 \int ({u_n}_x)^2 f_x + \int u_n^2 f_{xxx} +
\frac{2p}{p+1} \int u_n^{p+1} f_x. \]
For $C(\e)>1$ to be determined later, we thus have :
\begin{multline*}
\frac{d}{dt} \int u_n^2(t,x) g \left( \frac{x-A(\e)}{C(\e)} \right)
=  - \frac{3}{C(\e)} \int ({u_n}_x)^2 g' \left( \frac{x-A(\e)}{C(\e)} \right) \\
+ \frac{1}{C(\e)^3} \int u_n^2 g''' \left( \frac{x-A(\e)}{C(\e)}
\right) + \frac{2p}{(p+1)C(\e)} \int u_n^{p+1} g' \left(
\frac{x-A(\e)}{C(\e)} \right).
\end{multline*}
For $t \ge T_0\geq 0$, $u_n$ satisfies $\| u_n(t) \|_{H^1} \le C_0 + \sum_{j=1}^N \| Q_{c_j} \|_{H^1} \le C^0$, so that~:
\begin{align*}
\lefteqn{ \left| \frac{d}{dt} \int u_n^2(t,x)
g \left( \frac{x-A(\e)}{C(\e)} \right)
\right| } \\
& & \qquad \qquad & \le \frac{1}{C(\e)} \left( 3
\int {u_n}_x^2(t) + \int u_n^2(t) +
\frac{2p}{p+1} \| u_n \|_{L^\infty}^{p-1} \int u_n^2(t) \right) \\
& & & \le  \frac{1}{C(\e)} \left( 3{C^0}^2 +  \frac{2p}{p+1}
2^{(p-1)/2} {C^0}^{p+1} \right).
\end{align*}
Now choose $C(\e)= \max \left\{ 1, \frac{T(\e)-T_0}{\e} \left(
3{C^0}^2 + \frac{2p}{p+1} 2^{(p-1)/2} {C^0}^{p+1} \right) \right\}$,
and so
\[ \left| \frac{d}{dt} \int u_n^2(t,x) g \left( \frac{x-A(\e)}{C(\e)} \right)
\right| \le \frac{\e}{T(\e)-T_0}. \] 
By integration on $[T_0,T(\e)]$~:
\[ \int_{x \ge 2C(\e) + A(\e)}  u_n^2(T_0,x) \le \int u_n^2(T_0,x) g \left
( \frac{x-A(\e)}{C(\e)} \right) \le 5\e. \] Now considering
$\frac{d}{dt} \int u_n^2(t,x) g \left ( \frac{-A(\e)-x}{C(\e)}
\right)$, we get in a similar way
\[ \int_{x \le -2C(\e) - A(\e)} u_n^2(T_0,x) \le 5\e. \]
Therefore, setting $A_\e = 2C(\e/10) + A(\e/10)$, we obtain :
\[ \forall n \in \m N, \qquad  \int_{|x| \ge A_\e} u_n^2(T_0,x)  \le \e.\]
\end{proof}

By \eqref{truc}, the sequence
$(u_n(T_0))$ is   bounded   in $H^1$, thus we can extract a subsequence (still denoted by $(u_n)$) which converges weakly to $\varphi_0 \in H^1(\m R)$.  The previous compactness result ensures that the convergence is strong in $L^2(\m R)$. Indeed, let
$\e >0$ and let $A$ be such that $\int_{|x| \ge A} \varphi_0^2(x) dx \le \e$
and
\[ \forall n \in \m N, \qquad  \int_{|x| \ge A} u_n^2(T_0,x)  \le \e. \]
By the compact embedding $H^1([-A,A]) \to L^2([-A,A])$,
$\int_{|x| \le A} | u_n(T_0,x)  - \varphi_0(x) |^2 dx \to 0$ as $n\to +\infty$.
We thus
derive that
\[ \limsup_{n \in \m N} \| u_n(T_0) - \varphi_0 \|_{L^2(\m R)}^2 \le 4 \e. \]
Since this is true for all $\e >0$, $u_n(T_0) \to \varphi_0$ in $L^2(\m
R)$  as $n\to +\infty$. By interpolation, $u_n(T_0)$ converges strongly to $\varphi_0$ in
$H^s$ for all $s \in [0,1)$.

\medskip

\noindent\emph{Step 2. Construction of the multi-soliton $u^*$.}

Denote $u^*(t)$ the solution to
\[ \left\{ \begin{array}{l}
u_t^* + (u_{xx}^* + (u^*)^{p})_x = 0, \\
u^*(T_0) = \varphi_0. \end{array} \right. \] 
Due to \cite{KPV93}, the Cauchy problem for (\ref{gkdv}) is locally well-posed in $H^{s}$ for $s \ge 1/2$~: we will work in $H^{1/2}$ (which is not a critical space) and $H^1$. Let $u^* \in \q C([T_0,T^*), H^{1})$ be the  maximal solution to (gKdV). Recall the  blow up alternative: either $T^*=+\infty$ or $T^* <\infty$ and then $\| u^*(t) \|_{H^1} \to \infty$ as $t \uparrow T^*$. 

Since the flow is continuous in $H^{1/2}$, for any $t \in [T_0,T^*)$, $u_n(t)$ is defined for $n$ large enough and $u_n(t) \to u^*(t)$ in $H^{1/2}$ as $n\to +\infty$. By the uniform $H^1$ bound, we also obtain  $u_n(t) \tendf u^*(t)$ in $H^1$-weak. Hence, using Proposition \ref{vndecay}, 
\[ \forall t \in [T_0,T^*), \qquad \| u^*(t) - R(t) \|_{H^1} \le \liminf_{n \to \infty} \| u_n(t) - R(t) \|_{H^1} \le C e^{-\sigma_0^{3/2} t}. \]
In particular, we deduce that
\[ \forall t \in [T_0,T^*), \quad \| u^*(t) \|_{H^1} \le  C e^{-\sigma_0^{3/2} t} + \| R(t) \|_{H^1} \le C + \sum_{j=1}^N \| Q_{c_j} \|_{H^1}. \]
Due to the blow-up alternative, it follows that $T^* =\infty$.
Hence $u^* \in \q C([T_0,\infty), H^1)$ and moreover $\| u^*(t) - R(t) \|_{H^1} \leq C e^{-\sigma_0^{3/2} t}$ for all  $t \geq T_0$.
\end{proof}

\subsection{Proof of Proposition \ref{vndecay}}\label{sec:2.3}

The proof proceeds in several steps. For the sake of simplicity, we will drop the index $n$  for the rest of this section (except for $S_n$). As  Proposition \ref{vndecay} is proved for given $n$, this should not be a source of confusion. Hence we will write $u$ for $u_n$, $\gh b_j^\pm$ for $\gh b_{j,n}^\pm$ etc.
We possibly drop the first terms of the sequence $S_n$, so that for all $n$, $S_n$ is large enough for our purposes.

\medskip

\noindent \emph{Step 1.} Choice of a set of initial data.

\begin{lem}[Modulation for time independent function] \label{modulation}
	Let $0<c_1<\ldots<c_N$.
There exist $C, \e>0$ such that the following holds. Given $(\alpha_i)_{i=1, \ldots N}$ such $\min \{ |\alpha_i -\alpha_j| \; \ {i \ne j}\} \ge 1/\e$, if $u(x) \in L^2$ is such that
\[ \left\| u - \sum_{j=1}^N Q_{c_j}(x-\alpha_j) \right\|_{L^2} \le \e, \]
then there exist modulation parameters $\g y = (y_j)_{j=1,\ldots,N}$ such that setting
\[v= u - \sum_{j=1}^N Q_{c_j}(x - \alpha_j - y_j), \]
the following holds
\begin{gather}
\left\| v \right\|_{L^2} + \sum_{j=1}^N | y_j | \le C \left\| u - \sum_{j=1}^N Q_{c_j}(x- \alpha_j) \right\|_{L^2},  \label{mod1}\\
\text{and } \ \forall j =1, \ldots, N, \quad \int v(x) (Q_{c_j})_x(x - \alpha_j- y_j) dx =0.\label{mod2}
\end{gather}
Furthermore, $u \mapsto (v,\g y)$ is a smooth diffeomorphism.
\end{lem}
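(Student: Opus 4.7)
The plan is a standard implicit function theorem argument centered at the reference configuration $u_0 := \sum_{k=1}^N Q_{c_k}(\cdot - \alpha_k)$. For $u \in L^2$ and $\g y = (y_1, \ldots, y_N) \in \m R^N$, define
\[ F_j(u, \g y) = \int \l( u(x) - \sum_{k=1}^N Q_{c_k}(x - \alpha_k - y_k) \r) (Q_{c_j})_x(x - \alpha_j - y_j) \, dx, \]
and let $F = (F_1, \ldots, F_N) \colon L^2 \times \m R^N \to \m R^N$. The target orthogonality \eqref{mod2} reads $F(u, \g y) = 0$. The map $F$ is smooth by the smoothness and exponential decay of $Q_{c_k}$, and $F(u_0, 0) = 0$, so the problem is to solve this equation for $\g y$ as a function of $u$ near $(u_0, 0)$.

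The key step is to compute the $N \times N$ Jacobian $M_{jk} := \partial_{y_k} F_j$ at $(u_0, 0)$. Differentiating under the integral and observing that the factor $u - \sum_k Q_{c_k}(\cdot - \alpha_k - y_k)$ vanishes at the base point, the contribution involving $(Q_{c_j})_{xx}$ drops out and one is left with
\[ M_{jk} = \int (Q_{c_k})_x(x - \alpha_k) \, (Q_{c_j})_x(x - \alpha_j) \, dx. \]
The diagonal entries $M_{jj} = \| (Q_{c_j})_x \|_{L^2}^2$ are fixed positive constants depending only on the $c_j$'s. For $j \neq k$, the exponential decay of $(Q_{c_j})_x$ combined with the separation $|\alpha_j - \alpha_k| \ge 1/\e$ gives $|M_{jk}| \le C e^{-\delta/\e}$ for some $\delta > 0$ depending only on $c_1, \ldots, c_N$. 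Therefore, for $\e$ small enough, $M$ is diagonally dominant and its inverse is bounded uniformly in the admissible positions $(\alpha_j)$.

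With $M$ uniformly invertible, the implicit function theorem—applied in a neighborhood whose size is controlled solely by $\|M^{-1}\|$ and the $\q C^2$-size of $F$, both of which are uniform in $(\alpha_j)$ by translation invariance of the $L^2$ integrals—produces, for every $u$ with $\| u - u_0 \|_{L^2} \le \e$, a unique small $\g y = \g y(u)$ solving $F(u, \g y(u)) = 0$ together with the quantitative estimate $|\g y(u)| \le C \| u - u_0 \|_{L^2}$. Setting $v := u - \sum_k Q_{c_k}(\cdot - \alpha_k - y_k)$ and using $\| Q_{c_j}(\cdot - \alpha_j) - Q_{c_j}(\cdot - \alpha_j - y_j) \|_{L^2} \le C |y_j|$ yields \eqref{mod1} by the triangle inequality. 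The smooth version of the implicit function theorem provides the smoothness of $u \mapsto \g y(u)$, hence of $u \mapsto (v, \g y)$; the inverse map $(v, \g y) \mapsto v + \sum_k Q_{c_k}(\cdot - \alpha_k - y_k)$ is manifestly smooth.

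The only delicate point is the \emph{uniform} invertibility of $M$ over the non-compact parameter space of admissible tuples $(\alpha_j)$; this is precisely what dictates the choice of a small $\e$ (so that the separation $1/\e$ is large enough to kill off-diagonal terms) and it is the single place where the separation hypothesis is used.
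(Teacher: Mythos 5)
Your proposal is correct and follows essentially the same route as the paper: you define the orthogonality functional, compute its Jacobian in $\g y$ (diagonal entries $\|(Q_{c_j})_x\|_{L^2}^2$, off-diagonal terms exponentially small by the separation $|\alpha_j-\alpha_k|\ge 1/\e$), and conclude by the implicit function theorem with the quantitative bound $\|\g y\|\le C\|u-u_0\|_{L^2}$ and the triangle inequality for \eqref{mod1}. Your explicit remark on uniformity of the IFT neighborhood in the positions $(\alpha_j)$ is a point the paper leaves implicit, but the argument is the same.
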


\emph{Notation.} For  $\gh b$ small, from \eqref{gkdvSn} and continuity in $H^1$, $u(t)$ is defined and modulable (in the sense of the previous lemma) for $t$ close to $S_n$. As long as $u(t)$ is modulable around $R(t)$, we denote by $\g y(t) = (y_j(t))_{j=1,\ldots,N}$ the parameters of modulation, 
\begin{align*}
& \tilde R_j(t) = R_j(t, x-y_j(t)), \quad \tilde R(t) = \sum_{j=1}^N \tilde R_j(t), \quad \tilde Z^\pm_j(t,x) = Z^\pm_j(t,x-y_j(t)),\\
& v(t) = u(t) - \tilde R(t)\quad \text{so that}\quad
\forall j =1, \ldots, N, \quad \int v(t) (\tilde R_j)_x(t) =0,
\\ & \g a^{\pm}(t) = (a^{\pm}_j(t))_{j=1,\ldots,N}, \quad\text{where}\quad
a^{\pm}_j = \int v(t,x) \tilde Z^\pm_j(t,x) dx.
\end{align*}
We consider $\m R^N$ equipped with the $\ell^2$ norm. We denote by $B_{\q B}(P,r)$ the \emph{closed} ball of the Banach space $\q B$, centered at $P$ and of radius $r\geq 0$. If $P=0$, we simply write $B_{\q B}(r)$. Finally, $\m S_{\m R^N}(r)$ denotes the sphere of radius $r$ in $\m R^N$.

\bigskip

In view of Lemma \ref{ycoer}, we have to control the functions $\g a^{\pm}(t)$ on some time interval $[T_0,S_n]$. 
Since $Z^{+}$ and $Z^{-}$ are not orthogonal and because of the interactions between the various solitons, the values of $\g a^{\pm}(S_n)$ are not directed related to $\gh b$. The next lemma allows us to establish a one-to-one mapping between the choice of $\gh b$ in \eqref{gkdvSn} and the suitable constraints $\g a^{+}(S_n)= \gh a^+$, $\g a^{-}(S_n)= 0$, for any choice of $\gh a^+$.

\begin{lem}[Modulated final data] \label{finaldata}
	There exists $C>0$ (independent of $n$) such that
for all $\gh a^+ \in B_{\m R^N}(e^{-(3/2) \sigma_0^{3/2} S_n})$ there exists a unique $\gh b$ with $\|\gh b\|\leq C \|\gh a^+\|$ and such that the modulation $(v(S_n),\g y(S_n))$ of $u(S_n)$ satisfies
\begin{align*}
\g a^+(S_n) = \gh a^+ \quad \text{and} \quad \g a^-(S_n) = 0.
\end{align*}
\end{lem}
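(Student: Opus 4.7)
The plan is to set up the map
\[ \Phi \colon \gh b \longmapsto \bigl( \g a^+(S_n), \g a^-(S_n) \bigr) \in \m R^{2N}, \]
defined on a small ball of $\m R^{2N}$ around $0$, and to show that $\Phi$ is a local diffeomorphism at $0$ by a quantitative inverse function theorem, with derivative bounds uniform in $n$. The required $\gh b$ is then $\Phi^{-1}(\gh a^+, 0)$. Note first that $\Phi(0) = 0$: when $\gh b = 0$ one has $u(S_n) = R(S_n)$, and for $S_n$ sufficiently large the centers $c_j S_n + x_j$ of the $R_j$ are separated by at least $(c_{j+1}-c_j) S_n \gg 1/\e$, so Lemma \ref{modulation} applies and returns $\g y(S_n) = 0$, $v(S_n) = 0$, whence $\g a^\pm(S_n) = 0$.

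Next I compute the differential $D\Phi(0)$. A variation $\delta \gh b$ induces $\delta u(S_n) = \sum_{j,\pm} \delta \gh b_j^\pm Z_j^\pm(S_n)$. Linearizing the implicit relation defining $(\g y, v)$ in Lemma \ref{modulation}, the shifts $\delta y_k$ solve
\[ \delta y_k \, \| (R_k)_x \|_{L^2}^2 = -\int \delta u \cdot (R_k)_x(S_n) + O\!\bigl(e^{-\alpha S_n}\bigr)\|\delta \gh b\|, \]
for some $\alpha > 0$. The diagonal contribution $\int Z_k^\pm \cdot (R_k)_x$ vanishes since
\[ \int Z^\pm Q_x = (LY^\pm, Q_x) = (Y^\pm, L Q_x) = 0, \]
and off-diagonal terms are $O(e^{-\alpha S_n})$ by the exponential decay of $Z^\pm$ combined with the separation of soliton centers. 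Hence $|\delta y_k| = O(e^{-\alpha S_n})\|\delta \gh b\|$, and therefore $\delta v(S_n) = \delta u(S_n) + O(e^{-\alpha S_n})\|\delta \gh b\|$. Inserting into $\delta a_k^\pm = \int \delta v \cdot \tilde Z_k^\pm$ and again discarding off-diagonal interactions, we obtain
\[ D\Phi(0) = \mathrm{diag}(M_1, \dots, M_N) + O\!\bigl(e^{-\alpha S_n}\bigr), \quad M_k = \begin{pmatrix} (Z_k^+, Z_k^+) & (Z_k^+, Z_k^-) \\ (Z_k^-, Z_k^+) & (Z_k^-, Z_k^-) \end{pmatrix}. \]
Each $M_k$ is invertible: since $Z^+$ and $Z^-$ are linearly independent eigenvectors of $L\partial_x$ for distinct eigenvalues $\pm e_0$, strict Cauchy-Schwarz gives $|(Z_k^+, Z_k^-)| < \|Z_k^+\|\,\|Z_k^-\|$, so $\det M_k > 0$. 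Up to the scaling factor $c_k^{2/(p-1) - 1/2}$, the $M_k$ do not depend on $n$, so $\|M_k^{-1}\|$ is bounded by a constant independent of $n$. Therefore, for $n$ large, $D\Phi(0)$ is invertible with $\|D\Phi(0)^{-1}\| \leq C$.

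I then conclude by the quantitative inverse function theorem (Newton iteration / Banach fixed point applied to $\gh b \mapsto D\Phi(0)^{-1}[(\gh a^+, 0) - (\Phi(\gh b) - D\Phi(0)\gh b)]$): there exist $r_* > 0$ and $C > 0$, both uniform in $n$ for $n$ large, such that $\Phi$ maps some neighborhood of $0$ diffeomorphically onto $B_{\m R^{2N}}(r_*)$, with $\|\Phi^{-1}(P)\| \leq C\|P\|$. For $n$ large, $e^{-(3/2)\sigma_0^{3/2} S_n} \leq r_*$, so setting $\gh b = \Phi^{-1}(\gh a^+, 0)$ yields the unique preimage satisfying $\|\gh b\| \leq C\|\gh a^+\|$. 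The main obstacle will be making the $O(e^{-\alpha S_n})$ error estimates rigorous: both the linearization of the modulation and the cross-interaction bounds rely on the exponential decay of $Z^\pm$ (rate $\eta_0$) combined with the explicit separation of the soliton centers, and some care is needed to ensure that the constants in the inverse function theorem are ultimately independent of $n$.
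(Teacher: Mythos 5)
Your proposal follows essentially the same route as the paper's Appendix proof: both study the map $\gh b \mapsto (\g a^+(S_n), \g a^-(S_n))$ (the paper writes it as $\mathcal S \circ \Theta \circ \mathcal I$), show via the orthogonality $\int Z^\pm Q_x = 0$ and the exponential decay/separation of the solitons that its differential is the Gramm matrix of the family $(Z_j^\pm)_{j,\pm}$ up to $O(e^{-\alpha S_n} + \|\gh b\|)$ errors, invoke the linear independence of $Z^+$ and $Z^-$ for invertibility, and conclude by an inverse function theorem with constants uniform in $n$. This is correct and matches the paper's argument in substance.
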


\begin{proof}
See Appendix.
\end{proof}

Let $T_0$ to be determined later in the proof, independent of $n$. Let $\gh a^+$ to be chosen, $\gh b$ be given by Lemma \ref{finaldata} and let $u$ be the corresponding solution of \eqref{gkdvSn}.
We now define the maximal time interval $[T(\gh a^+),S_n]$ on which   suitable exponential estimates hold. 

\begin{defi}\label{def}
Let $T(\gh a^+)$ be the infimum of  $T \ge T_0$ such that  the following properties hold for all $t \in [T,S_n]$~:
\begin{itemize}
\item[$\bullet$] Closeness to $R(t)$:
$$\| u(t) - R(t) \|_{H^1} \le \e.$$
In particular, this ensures that $u(t)$ is modulable around $R(t)$ in the sense of Lemma \ref{modulation}.
\item[$\bullet$] Estimates on the modulation parameters:
\begin{align*}
& e^{\sigma_0^{3/2} t} v(t)   \in B_{H^1}(1),\quad 
e^{\sigma_0^{3/2} t} \g y(t)   \in B_{\m R^N}(1), \\
& e^{(3/2) \sigma_0^{3/2} t} \g a^-(t)  \in B_{\m R^{N}}(1), \quad 
e^{(3/2) \sigma_0^{3/2} t} \g a^+(t)  \in B_{\m R^{N}}(1).
\end{align*}
\end{itemize}
\end{defi}

Observe that Proposition \ref{vndecay} is proved if for all $n$, we can find $\gh a^+$ such that $T(\gh a^+) = T_0$. The rest of the proof is devoted to prove the existence of such a value of $\gh a^+$.

\medskip

We claim the following  preliminary results on the modulation parameters of $u(t)$.

\begin{claim}
\begin{equation}\label{gkdvSnvn}  
v_t + \left( v_{xx} + (v + \tilde R)^{p} - \sum_{j=1}^N \tilde R_j^p \right)_x - \sum_{j=1}^N \frac {dy_j} {dt} {\/\tilde R_j}_x=0, 
\end{equation}
\begin{equation}\label{rien}
\forall t \in [T(\gh a^+), S_n], \qquad \left\| \frac{d\g y}{dt} (t) \right\| \le C \|v(t)\|_{L^2} + C e^{-2 \sigma_0^{3/2} t}.
\end{equation}
\begin{equation} \label{abound}
\forall t \in [T(\gh a^+), S_n], \ \forall j,\qquad \left| \frac{d a^\pm_j}{dt}(t) \pm e_0 c_j^{3/2} a^\pm_j(t) \right| \le C \| v(t) \|_{H^1}^2 + C e^{-3 \sigma_0^{3/2} t}.
\end{equation}
\end{claim}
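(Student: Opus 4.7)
The plan is to prove the three parts of the claim in sequence, each building on the previous.

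\medskip

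For \eqref{gkdvSnvn}, the approach is direct substitution. Writing $u = v + \tilde R$ in \eqref{gkdvSn}, the nonlinearity becomes $(v+\tilde R)^p$; it then suffices to compute $\partial_t \tilde R$. For each $j$, the chain rule gives $\partial_t \tilde R_j(t,x) = (\partial_t R_j)(t, x - y_j(t)) - y_j'(t) (\tilde R_j)_x(t,x)$, and since $R_j$ is an exact soliton of (gKdV), the first summand equals $-(\tilde R_{j,xx} + \tilde R_j^p)_x$. Summing over $j$ and subtracting from $u_t + (u_{xx} + u^p)_x = 0$ yields \eqref{gkdvSnvn} after regrouping the power terms.

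\medskip

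For \eqref{rien}, the plan is to differentiate the $N$ orthogonality conditions $\int v (\tilde R_j)_x\,dx = 0$ in time. Substituting \eqref{gkdvSnvn} for $v_t$ and the previous formula for $\partial_t(\tilde R_j)_x$ produces an $N\times N$ linear system
\[ A(t)\,\g y'(t) = F(t). \]
The principal part of $A$ is the diagonal matrix $\mathrm{diag}\bigl(\int ((\tilde R_j)_x)^2\,dx\bigr)$, whose entries equal $c_j^\gamma \|Q_x\|_{L^2}^2 > 0$ (for an explicit $\gamma$) and are $t$-independent; the off-diagonal entries $\int (\tilde R_j)_x(\tilde R_k)_x$ involve two exponentially localized profiles whose centers separate at linear rate $|c_j - c_k|\,t$, hence are $O(e^{-\sigma t})$ with $\sigma \geq 2\sigma_0^{3/2}$ by the choice \eqref{sigma0}. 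So $A(t)$ is uniformly invertible for $T_0$ large. The source $F_j$ consists of linear-in-$v$ terms, which after integrations by parts transferring derivatives onto the Schwartz functions $(\tilde R_j)_x$ are bounded by $C\|v\|_{L^2}$, together with pure soliton interaction terms from $\tilde R^p - \sum_k \tilde R_k^p$, supported on overlap regions of two solitons and hence $O(e^{-2\sigma_0^{3/2}t})$. Inverting $A(t)$ produces \eqref{rien}.

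\medskip

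For \eqref{abound}, I differentiate $a_j^\pm(t) = \int v\,\tilde Z_j^\pm$. Since $Z_j^\pm$ is a traveling wave of speed $c_j$, $\partial_t Z_j^\pm = -c_j (Z_j^\pm)_x$, so after modulation $\partial_t \tilde Z_j^\pm = -(c_j + y_j')(\tilde Z_j^\pm)_x$. Substituting \eqref{gkdvSnvn} and integrating by parts, the leading linear-in-$v$ contribution is $\int v\,\bigl[(\tilde Z_j^\pm)_{xxx} + p\,\tilde R_j^{p-1}(\tilde Z_j^\pm)_x\bigr]$. The key algebraic identity is the scaled eigenvalue relation
\[ \tilde L_j\bigl((\tilde Z_j^\pm)_x\bigr) = \pm e_0\,c_j^{3/2}\,\tilde Z_j^\pm, \qquad \tilde L_j w = -w_{xx} - p\,\tilde R_j^{p-1} w + c_j w, \]
obtained from $L(Z_x^\pm) = \pm e_0 Z^\pm$ by the scaling $\tilde R_j = c_j^{1/(p-1)} Q(\sqrt{c_j}\,\cdot)$. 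This rewrites the bracketed term as $\mp e_0 c_j^{3/2}\,\tilde Z_j^\pm + c_j (\tilde Z_j^\pm)_x$, so that $\int v\cdot[\cdot] = \mp e_0 c_j^{3/2} a_j^\pm + c_j \int v (\tilde Z_j^\pm)_x$; the second summand then cancels exactly with the $-c_j \int v (\tilde Z_j^\pm)_x$ produced by $\int v\,\partial_t \tilde Z_j^\pm$, giving the precise coefficient $\pm e_0 c_j^{3/2}$ required. The cross terms $\sum_k y_k' \int (\tilde R_k)_x \tilde Z_j^\pm$ vanish at $k = j$ since $(Q_x, Z^\pm) = (LQ_x, Y^\pm) = 0$, and only survive for $k \neq j$ with exponentially small coefficients. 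The remaining errors --- the Taylor remainder $(v+\tilde R)^p - \tilde R^p - p\,\tilde R^{p-1} v = O(v^2)$, the shifted part $p(\tilde R^{p-1} - \tilde R_j^{p-1})v$ (localized on soliton overlap regions), and $y_j' \int v (\tilde Z_j^\pm)_x$ --- are then controlled by $C\|v\|_{H^1}^2 + C e^{-3\sigma_0^{3/2} t}$ using \eqref{rien} and the bounds of Definition \ref{def}.

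\medskip

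The main technical obstacle is verifying that every error term genuinely sits below the target exponential bounds $e^{-2\sigma_0^{3/2}t}$ and $e^{-3\sigma_0^{3/2}t}$: this is precisely the role of the calibration \eqref{sigma0}, which takes $\sigma_0$ as one-quarter of the minimum of $\eta_0$, $e_0^{2/3} c_1$, $c_1$, and the consecutive speed gaps $c_{j+1}-c_j$, so that both the overlap decay rate between distinct solitons and the eigenvalue rate $e_0 c_j^{3/2}$ comfortably beat the powers of $\sigma_0^{3/2}$ appearing in the statement.
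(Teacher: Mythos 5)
Your proposal is correct and follows essentially the same route as the paper: direct substitution $u=v+\tilde R$ for \eqref{gkdvSnvn}, pairing the equation of $v$ with $({\tilde R_j})_x$ and using the (almost diagonal, $\|v\|$-perturbed) coefficient structure for \eqref{rien}, and the scaled eigenvalue identity $\tilde L_j\bigl((\tilde Z_j^\pm)_x\bigr)=\pm e_0 c_j^{3/2}\tilde Z_j^\pm$ together with $\int Q_x Z^\pm=0$, \eqref{rien} and the bootstrap bounds of Definition \ref{def} for \eqref{abound}. The only cosmetic differences are that you phrase the modulation equations as inverting an $N\times N$ matrix where the paper argues component-wise, and you quote a slightly weaker (but still sufficient) decay for the soliton interaction terms.
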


\begin{proof}
The equation of 
	$v(t)$ is obtained by elementary computations from the equation of $u(t)$.
	Taking the scalar product of this equation with ${\/\tilde R_j}_x$, we see that $y_j(t)$ satisfy
	\begin{equation*}
	\frac{dy_j}{dt} \| {Q_{c_j}}_x  \|_{L^2}^2 = \int \left( v_{xx} + (v + \tilde R)^{p} - \sum_{j=1}^N \tilde R_j^p \right)_x {\/\tilde R_j}_x - \frac{dy_j}{dt}  \int v {\/\tilde R_j}_{xx}.
	\end{equation*} 
	From ($t \ge T_0$ large enough)
	$\| v(t) \|_{H^1} \le e^{- \sigma_0^{3/2} t} \le \frac{\|{Q_{c_j}}_x \|_{L^2}^2}{2 \| {Q_{c_j}}_{xx} \|_{L^2}},$
	 using integration by parts to have all the derivatives on ${\/\tilde R_j}_x$ and using Cauchy-Schwarz inequality, we get \eqref{rien}.
	\medskip
	
	Now, we prove \eqref{abound}.
	First, note that $\int {\/\tilde R_j}_x \tilde Z^\pm_j =0$ follows from  
\begin{equation} \label{Q_xZvanish}
\int Q_x Z^\pm =\pm e_0^{-1} \int Q_x L(Z^\pm_x) = \pm e_0^{-1} \int L(Q_x) Z^\pm_x =0.
\end{equation}
Using the equation of $v(t)$ and next the equations of $Z^\pm$,
\begin{align*}
& \frac{da_j^\pm}{dt} (t)  = \int v_t \tilde Z_j^\pm + \int v {\/\tilde Z_j^\pm}_t \\
& = - \int (v_{xx} + (v+ \tilde R)^{p} -\sum_k \tilde R_k^p)_x \tilde Z_j^\pm + \sum_k \frac{dy_k}{dt} \int {\/\tilde R_k}_x \tilde Z^\pm_j 
- ( c_j + \frac{dy_j}{dt} ) \int v {\/\tilde Z_j^\pm}_x\\
& = - \int (v_{xx} + p \tilde R_j^{p-1} v)_x \tilde Z_j^\pm - c_j \int v {\/\tilde Z_j^\pm}_x \\
& \qquad \qquad + \int ((v+\tilde R)^{p} - \sum_k \tilde R_k^p - p \tilde R_j^{p-1}v)_x \tilde Z_j^\pm +  \sum_{k\ne j} \frac{dy_k}{dt} \int {\/\tilde R_k}_x \tilde Z^\pm_j - \frac{dy_j}{dt} \int v {\/\tilde Z_j^\pm}_x\\ 
& = - \int v L_j ({\/\tilde Z_j^\pm}_x) - \int ((v+ \tilde R)^{p} - \sum_k \tilde R_k^p - p \tilde R_j^{p-1}v) {\/\tilde Z_j^\pm}_x + \sum_{k \ne j} \frac{dy_k}{dt} \int {\/\tilde R_k}_x \tilde Z^\pm_j -  \frac{dy_j}{dt} \int v {\/\tilde Z_j^\pm}_x\\
& = \mp e_0 c_j^{3/2} a_j^\pm(t) - \int ((v+ \tilde R)^{p} - \sum_k \tilde R_k^p - p \tilde R_j^{p-1}v) {\/\tilde Z_j^\pm}_x + \sum_{k \ne j} \frac{dy_k}{dt} \int {\/\tilde R_k}_x \tilde Z^\pm_j -  \frac{dy_j}{dt} \int v {\/\tilde Z_j^\pm}_x.
\end{align*}
Using \eqref{sigma0}, for $k \ne j$,
\begin{equation} 
|\tilde R_k(t,x)| (|\tilde Z_j^\pm(t,x)| + |{\/\tilde Z_j^\pm}_x(t,x)| )\le C e^{- 2\sqrt{\sigma_0} (|x- c_k t| + |x-c_j t|)} \le C e^{- 3 \sigma_0^{3/2} t} e^{-\sqrt{\sigma_0} |x-c_jt|}.
\end{equation}
Hence we have
\begin{gather}
\left| \int (|v+\tilde R|^{p-1}(v+\tilde R) - \sum_{k=1}^N \tilde R_k^p - p\tilde R_j^{p-1} v) {\/\tilde Z_j^\pm}_x \right| 
\le C \| v(t) \|_{H^1}^2 + C e^{-3\sigma_0^{3/2} t},  \label{nonlinbound1}\\
\left| \sum_{k \ne j} \frac{dy_k}{dt} \int {\/\tilde R_k}_x \tilde Z^\pm_j \right| \le C e^{- 3 \sigma_0^{3/2} t} \| v(t) \|_{H^1} 
\le C \| v(t) \|_{H^1}^2 + C e^{- 4 \sigma_0^{3/2} t}. \label{nonlinbound2}
\end{gather}
The term $\frac{dy_j}{dt} \int v {\/\tilde Z_j^\pm}_x$ is controlled using \eqref{rien}.
\end{proof}

\noindent\emph{Step 2.} Conditionnal stability of $ v$ and $\g y$ under the control of $\g a^\pm$.

We claim the following improvement of the estimates for $v(t)$ and $\g y$ on $[T(\gh a^+),S_n]$.

\begin{lem}[Control of $v$ and $\g y$]\label{vcontrol}
For   $T_0$ large enough (independent of $n$) and for all $\gh a^+ \in B_{\m R^N}(e^{-(3/2)\sigma_0^{3/2} S_n})$, the following holds
\begin{align} 
	\forall t\in [T(\gh a^+),S_n],\quad 
& \| u(t) - R(t) \|_{H^1}  \le C e^{-\sigma_0^{3/2} t} \le \e_0/2, \label{undecay} \\
& e^{\sigma_0^{3/2} t} \| v(t) \|_{H^1}   \le 1/2,\quad
e^{\sigma_0^{3/2} t} \| \g y(t) \|   \le 1/2.\label{vcont}
\end{align}
\end{lem}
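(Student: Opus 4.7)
The plan is to follow the Weinstein-type Lyapunov functional approach used in \cite{Mar05,MMT02}, using the coercivity Lemma \ref{ycoer} in place of the standard one that fails in the supercritical regime. The unstable and stable directions $Z^\pm_j$ are not handled by modulation but by the bootstrap control on $\g a^\pm$ from Definition \ref{def}; the directions $\tilde R_{j,x}$ are killed by the modulation orthogonalities \eqref{mod2}.

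First I would introduce smooth cutoffs $\psi_j(t,x)$ that partition $\m R$ into $N$ slabs, each localizing around the $j$-th soliton trajectory, chosen so that $\partial_t \psi_j + \partial_x^3 \psi_j$ has a favourable sign or is exponentially small (the standard $\psi_j \sim \phi((x - \tfrac{c_{j-1}+c_j}{2}t)/\sqrt{\sigma_0^{-1}})$ construction). With these I would form
\[
\q F(t) = \sum_{j=1}^N \l(\frac{1}{2}\int v_x^2 \psi_j + \frac{c_j}{2}\int v^2 \psi_j\r) - \int \l(G(\tilde R + v) - G(\tilde R) - G'(\tilde R) v\r),
\]
where $G(u) = u^{p+1}/(p+1)$; this is, up to interaction remainders, the sum of the Weinstein quadratic forms $\frac{1}{2}(L_{c_j} v, v)$ computed in the frame of each soliton.

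Next I would differentiate $\q F$ in time using \eqref{gkdvSnvn}, integrating by parts carefully and splitting contributions by the $\psi_j$. The bulk contribution is controlled by the sign of $\partial_t \psi_j + \partial_x^3 \psi_j$; cross-interactions between $\tilde R_i$ and $\tilde R_j$ ($i \ne j$), and between $v$ and distant solitons, decay like $e^{-3\sigma_0^{3/2} t}$ thanks to the choice of $\sigma_0$ in \eqref{sigma0}; and the modulation term $\dot y_j \tilde R_{j,x}$ is controlled via \eqref{rien}. This yields a one-sided bound
\[
\frac{d\q F}{dt}(t) \ge - C\l(e^{-3\sigma_0^{3/2} t} + \|v(t)\|_{H^1}^3\r).
\]
In parallel, localizing Lemma \ref{ycoer} around each $\tilde R_j$ by a change of variables centred on the $j$-th soliton, and using the orthogonalities \eqref{mod2}, one obtains
\[
\q F(t) \ge \lambda \|v(t)\|_{H^1}^2 - \frac{1}{\lambda}\sum_{j,\pm}\bigl(a_j^\pm(t)\bigr)^2 - C\|v(t)\|_{H^1}^3 - C e^{-3\sigma_0^{3/2} t}.
\]

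To conclude, I integrate backwards from $S_n$. At the final time, \eqref{gkdvSn} and Lemma \ref{finaldata} give $\|v(S_n)\|_{H^1} + \|\g y(S_n)\| \le C e^{-(3/2)\sigma_0^{3/2} S_n}$ so $\q F(S_n) \le C e^{-3\sigma_0^{3/2} S_n}$. Using the bootstrap $\|v(t)\|_{H^1} \le e^{-\sigma_0^{3/2} t}$ to absorb the cubic term, one gets $\q F(t) \le C e^{-3\sigma_0^{3/2} t}$ on $[T(\gh a^+), S_n]$; combined with the coercivity and with $\|\g a^\pm(t)\| \le e^{-(3/2)\sigma_0^{3/2} t}$, this upgrades the bootstrap to $\|v(t)\|_{H^1} \le C e^{-(3/2)\sigma_0^{3/2} t}$, hence $\le \tfrac{1}{2}e^{-\sigma_0^{3/2} t}$ for $T_0$ large enough. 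Reinjecting this bound into \eqref{rien} and integrating from $S_n$ backwards gives $\|\g y(t)\| \le \tfrac{1}{2}e^{-\sigma_0^{3/2} t}$, and \eqref{undecay} then follows from $\|u - R\|_{H^1} \le \|v\|_{H^1} + C\|\g y\|$. The main obstacle is the derivative estimate for $\q F$: the soliton-soliton interactions and the modulation correction must be shown to decay strictly faster than $\|v\|_{H^1}^2 \sim e^{-2\sigma_0^{3/2} t}$, which is precisely what the choice \eqref{sigma0} of $\sigma_0$ guarantees.
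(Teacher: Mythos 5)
Your overall architecture is the same as the paper's (localized Weinstein functional, localized coercivity from Lemma \ref{ycoer} with the unstable/stable components $\g a^\pm$ left to the bootstrap, integration backwards from $S_n$ using the smallness of the final data from Lemma \ref{finaldata}, then \eqref{rien} for $\g y$ and the triangle inequality for \eqref{undecay}). The gap is in the key differential inequality: the claimed bound $\frac{d\q F}{dt}\ge -C\bigl(e^{-3\sigma_0^{3/2}t}+\|v\|_{H^1}^3\bigr)$ is not attainable. When you differentiate the localized functional, the derivatives of the cutoffs hit terms that are \emph{purely quadratic} in $v$, e.g. $\int v_x^2\,\partial_x\psi_j$, $\int v^2\,\partial_t\psi_j$, $\int v^2\,\partial_x^3\psi_j$; in the transition regions the solitons are exponentially small but $v$ is not, so these terms are of size $\|\partial_x\psi_j\|_{L^\infty}\|v\|_{H^1}^2$ — neither cubic nor $O(e^{-3\sigma_0^{3/2}t})$. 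Nor can they be discarded by the sign of $\partial_t\psi_j+\partial_x^3\psi_j$: the gKdV monotonicity (mass accumulating to the left of rightward-moving reference points) makes the relevant combination of localized masses almost monotone in the \emph{forward} time direction, i.e. it yields $\frac{d\q F}{dt}\le +\,\text{small}$, which upon integrating from $t$ to $S_n$ gives a \emph{lower} bound on $\q F(t)$ — useless here. In the backward construction the quadratic localization terms appear with the unfavorable sign and must be kept as errors.

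Once these quadratic errors are kept honestly, your scheme does not close as written: with fixed-width cutoffs (your $\sqrt{\sigma_0^{-1}}$ scale) one gets $\int_t^{S_n}\|v(s)\|_{H^1}^2\,ds\le C\sigma_0^{-3/2}e^{-2\sigma_0^{3/2}t}$, whose constant cannot be made small by taking $T_0$ large, so the bootstrap $e^{\sigma_0^{3/2}t}\|v(t)\|_{H^1}\le 1\Rightarrow\le 1/2$ is not improved, and in particular your claimed upgraded rate $\|v(t)\|_{H^1}\lesssim e^{-(3/2)\sigma_0^{3/2}t}$ is not obtained. The paper's proof resolves exactly this point differently: it uses a \emph{two-sided} almost-conservation estimate for $\sum_j\bigl(E_j+\frac{c_j}{2}M_j\bigr)$ with cutoffs whose width grows like $\sqrt t$ (see \eqref{psi0}, \eqref{flocal}, \eqref{localcontrol}), so that every cutoff-derivative term — including the quadratic ones — carries a factor $1/\sqrt t$; the time integral is then bounded by $\frac{C_0}{\sqrt t}e^{-2\sigma_0^{3/2}t}$ and the bootstrap closes by choosing $T_0$ with $C_0/\sqrt{T_0}\le 1/8$, i.e. the improvement comes from smallness of the constant, not from a faster exponential rate. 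Your treatment of the soliton--soliton interactions, the modulation term via \eqref{rien}, the final-data input, and the subsequent control of $\g y$ and of \eqref{undecay} are all consistent with the paper; the missing idea is precisely this $\sqrt t$-width localization (or some substitute) to beat the quadratic-in-$v$ localization errors.
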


The proof of Lemma \ref{vcontrol} is postponed to the end of this section. It is very similar to the proofs in the subcritical case (see \cite{Mar05} or \cite{MM06}).

\bigskip

\noindent\emph{Step 3.} Control of   $\g a^-(t)$.

\begin{lem}[Control of $\g a^-(t)$]\label{le:a-}
For   $T_0$ large enough (independent of $n$)  and for all $\gh a^+ \in B_{\m R^N}(e^{-(3/2)\sigma_0^{3/2} S_n})$, the following holds
$$\forall t\in [T(\gh a^+),S_n],\quad e^{(3/2) \sigma_0^{3/2} t} \| \g a^-(t) \| \le 1/2.$$
\end{lem}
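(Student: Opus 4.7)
The strategy is to integrate the ODE estimate \eqref{abound} backward in time from $S_n$, exploiting the fact that for the minus component the coefficient in front of $a^-_j$ has the \emph{stable} sign when one reads \eqref{abound} backward. More precisely, \eqref{abound} reads
\[ \frac{d a^-_j}{dt}(t) - e_0 c_j^{3/2} a^-_j(t) = r_j(t), \qquad |r_j(t)| \le C\|v(t)\|_{H^1}^2 + C e^{-3\sigma_0^{3/2} t}, \]
and Lemma~\ref{finaldata} furnishes the terminal condition $a^-_j(S_n) = 0$. Since $e_0 c_j^{3/2} > 0$, the homogeneous solutions grow forward and decay backward, so the natural Duhamel representation based at $S_n$ gives an absolutely convergent integral with no Gronwall blow-up.

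Concretely, set $f_j(t) = e^{-e_0 c_j^{3/2}(t-S_n)} a^-_j(t)$, so that $f_j(S_n) = 0$ and $f_j'(t) = e^{-e_0 c_j^{3/2}(t-S_n)} r_j(t)$. Plugging in $\|v(t)\|_{H^1} \le \tfrac{1}{2} e^{-\sigma_0^{3/2} t}$ from Lemma~\ref{vcontrol}, we obtain, for $T_0$ large,
\[ |f_j'(t)| \le C \bigl(e^{-2\sigma_0^{3/2} t} + e^{-3\sigma_0^{3/2} t}\bigr) e^{-e_0 c_j^{3/2}(t-S_n)} \le C e^{-2\sigma_0^{3/2} t} e^{-e_0 c_j^{3/2}(t-S_n)}. \]
Integrating from $t$ to $S_n$ and using $f_j(S_n)=0$, then multiplying back by $e^{e_0 c_j^{3/2}(t-S_n)}$, gives
\[ |a^-_j(t)| \le e^{e_0 c_j^{3/2}(t-S_n)} \int_t^{S_n} C e^{-2\sigma_0^{3/2} s} e^{-e_0 c_j^{3/2}(s-S_n)}\,ds \le \frac{C}{e_0 c_j^{3/2} + 2\sigma_0^{3/2}}\, e^{-2\sigma_0^{3/2} t}. \]

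To conclude, write $2\sigma_0^{3/2} = \tfrac{3}{2}\sigma_0^{3/2} + \tfrac{1}{2}\sigma_0^{3/2}$, so that $\|\g a^-(t)\| \le N^{1/2} C'\, e^{-(1/2)\sigma_0^{3/2} t}\, e^{-(3/2)\sigma_0^{3/2} t}$, and enlarge $T_0$ once and for all (depending only on $C'$, $\sigma_0$, $N$, and independent of $n$) so that $N^{1/2} C' e^{-(1/2)\sigma_0^{3/2} T_0} \le 1/2$. Since $[T(\gh a^+), S_n] \subset [T_0, S_n]$, this yields the claimed estimate. No real obstacle is encountered in this lemma: it is precisely the ``stable-backward'' direction, and the proof uses only ingredients already at hand, namely the terminal condition from Lemma~\ref{finaldata} and the uniform $H^1$ bound on $v$ from Lemma~\ref{vcontrol}. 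The genuine difficulty will appear one step later, when one must control $\g a^+$, whose spectral sign is \emph{unfavorable}; there, a Brouwer-type topological argument to select the parameter $\gh a^+$ becomes unavoidable.
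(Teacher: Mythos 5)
Your proof is correct and follows essentially the same route as the paper: integrate \eqref{abound} for the minus component backward from the terminal condition $a_j^-(S_n)=0$ supplied by Lemma \ref{finaldata}, using the bound $\|v(t)\|_{H^1}\le e^{-\sigma_0^{3/2}t}$, so that the factor $e^{e_0 c_j^{3/2}(t-s)}\le 1$ makes the Duhamel integral harmless and yields $|a_j^-(t)|\le C e^{-2\sigma_0^{3/2}t}$, whence the conclusion after enlarging $T_0$. Your integrating-factor formulation is just an explicit rewriting of the paper's one-line Duhamel estimate.
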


\begin{proof}
It follows from \eqref{abound}, \eqref{vcont} and $a_j^-(S_n)=0$ that for all $t \in [T(\gh a^-),S_n]$,
\begin{align*}
|a_j^-(t)| & \le C e^{e_0 c_j^{3/2} t} \int_t^{S_n} e^{-e_0 c_j^{3/2} s} \left( e^{-2 \sigma_0^{3/2} s}
+ e^{- 3 \sigma_0^{3/2} s} \right) ds \le C e^{-2 \sigma_0^{3/2} t}.
\end{align*}
Hence, for $T_0$  large enough, 
$ \forall t \in [T(\gh a^-),S_n],$ $\|\g a^-(t)\| \le C e^{-2 \sigma_0^{3/2} t} \le \frac{1}{2} e^{-(3/2) \sigma_0^{3/2} t}$.
\end{proof}

\noindent\emph{Step 4.} Control of $\g a^+(t)$ by a topogical argument.

Finally we turn to the control of $\g a^+(t)$  which will provide us with a suitable value of $\gh a^+$. This is the new key argument of this paper.

\begin{lem}[Control of $\g a^+(t)$]\label{le:a+}
For  $0<\sigma_0 <\bar \sigma_0$ small enough, $T_0$ large enough,
there exists $\gh a^+ \in B_{\m R^N}( e^{-(3/2) \sigma_0^{3/2} S_n})$  such that $T(\gh a^+) = T_0$.
\end{lem}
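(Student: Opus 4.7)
The strategy is a Brouwer no-retraction argument. I view $\gh a^+$ as a $2N$-dimensional shooting parameter at the final time $S_n$ (via Lemma~\ref{finaldata}) and aim to show that if $T(\gh a^+) > T_0$ for every $\gh a^+$ in the ball, one could build a continuous retraction of the ball onto its boundary sphere, contradicting Brouwer. The first observation is that, by Lemmas~\ref{vcontrol} and~\ref{le:a-}, on $[T(\gh a^+), S_n]$ the bounds on $v$, $\g y$ and $\g a^-$ are satisfied with a strict factor of $1/2$. Hence if $T(\gh a^+) > T_0$, the only estimate in Definition~\ref{def} that can saturate at $t = T(\gh a^+)$ is the one on $\g a^+$, namely $\|\g a^+(T(\gh a^+))\| = e^{-(3/2)\sigma_0^{3/2} T(\gh a^+)}$.

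The key analytical step is a transversality estimate. I set $N(t) = e^{3\sigma_0^{3/2} t} \|\g a^+(t)\|^2$, and use \eqref{abound} together with the strict control $\|v(t)\|_{H^1}^2 \le \frac{1}{4} e^{-2\sigma_0^{3/2} t}$ from Lemma~\ref{vcontrol} to compute, on the boundary $\{N = 1\}$,
\[ \frac{dN}{dt} \le 3\sigma_0^{3/2} - 2e_0 c_1^{3/2} + C e^{-\sigma_0^{3/2} t/2}. \]
The choice $\sigma_0 \le \frac{1}{4} e_0^{2/3} c_1$ in \eqref{sigma0} forces $\sigma_0^{3/2} \le e_0 c_1^{3/2}/8$ and hence $3\sigma_0^{3/2} - 2e_0 c_1^{3/2} \le -13 \sigma_0^{3/2}$; taking $T_0$ large absorbs the exponentially small remainder, giving $dN/dt \le -12 \sigma_0^{3/2} < 0$ whenever $N(t) = 1$ and $t \ge T_0$. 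Trajectories therefore cross the sphere $\{N = 1\}$ transversally, with $N$ strictly decreasing in forward time at the crossing.

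With transversality in hand, I run the topological argument. Assume for contradiction that $T(\gh a^+) > T_0$ for every $\gh a^+ \in B_{\m R^N}(e^{-(3/2)\sigma_0^{3/2} S_n})$, and define
\[ \tilde\Phi(\tilde{\gh a}^+) = e^{(3/2)\sigma_0^{3/2} T(\gh a^+)} \g a^+(T(\gh a^+)), \qquad \gh a^+ = e^{-(3/2)\sigma_0^{3/2} S_n} \tilde{\gh a}^+. \]
Step~1 places $\tilde\Phi$ into $\m S_{\m R^N}(1)$; continuity of the $H^1$-flow on $[T_0,S_n]$, together with the implicit function theorem applied to the level set $\{N = 1\}$ (whose $t$-derivative is non-zero by the transversality), makes $T(\gh a^+)$, and hence $\tilde\Phi$, continuous on $B_{\m R^N}(1)$. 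When $\tilde{\gh a}^+ \in \m S_{\m R^N}(1)$, Lemma~\ref{finaldata} gives $\g a^+(S_n) = \gh a^+$ and thus $N(S_n) = 1$; the transversality forces $T(\gh a^+) = S_n$, so $\tilde\Phi(\tilde{\gh a}^+) = \tilde{\gh a}^+$. This would be a continuous retraction of the closed unit ball onto its boundary in $\m R^N$, contradicting Brouwer's theorem. Hence some $\gh a^+$ satisfies $T(\gh a^+) = T_0$.

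The main obstacle is the transversality estimate of the second step: the hyperbolic rate $e_0 c_j^{3/2}$ from \eqref{abound} must dominate both the exponent $(3/2)\sigma_0^{3/2}$ we impose on $\g a^+$ and the quadratic-in-$v$ remainder coming from the nonlinear and interaction terms. The precise smallness of $\sigma_0$ chosen in \eqref{sigma0} is tailored exactly to make this competition favorable; without it, neither the monotonicity on the boundary nor the continuity of $T(\gh a^+)$ could be secured, and the topological conclusion would break down.
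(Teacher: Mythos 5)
Your proposal is correct and follows essentially the same route as the paper: the same renormalized quantity $\mathcal{N}(t)=e^{3\sigma_0^{3/2}t}\|\g a^+(t)\|^2$, the same transversality (strict outgoing) estimate at the crossing of $\{\mathcal{N}=1\}$ derived from \eqref{abound} and the strict bounds of Lemmas \ref{vcontrol} and \ref{le:a-}, continuity of $\gh a^+\mapsto T(\gh a^+)$, and the same no-retraction/Brouwer contradiction via a map that is the identity on the sphere (the paper's map $\mathcal{M}$, your $\tilde\Phi$ after rescaling). Only a cosmetic slip: $\gh a^+$ is an $N$-dimensional parameter (the full final-data perturbation $\gh b$ is the $2N$-dimensional object), as your own use of $B_{\m R^N}$ and $\m S_{\m R^N}$ correctly reflects.
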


\begin{proof}
We argue by contradiction. Assume that for all $\gh a^+ \in B_{\m R^N}( e^{-(3/2) \sigma_0^{3/2} S_n})$, one has $T(\gh a^+) > T_0$. From Lemmas \ref{vcontrol} and \ref{le:a-}
\begin{align*}
& u(T(\gh a^+)) - R(T(\gh a^+))  \in B_{H^1}(\e_0/2) , \quad
e^{\sigma_0^{3/2} T(\gh a^+)} v(T(\gh a^+))  \in B_{H^1}(1/2) , \\ 
& e^{\sigma_0^{3/2} T(\gh a^+)} \g y(T(\gh a^+))  \in B_{\m R^N}(1/2) , \quad
e^{(3/2) \sigma_0^{3/2} T(\gh a^+)} \g a^-(T(\gh a^+))  \in B_{\m R^N}(1/2).
\end{align*}
Hence by definition of $T(\gh a^+)$ and continuity of the flow, one must have 
\begin{equation}\label{contra}
	e^{(3/2) \sigma_0^{3/2} T(\gh a^+)} \g a^+(T(\gh a^+)) \in  \m S_{\m R^N}(1).
\end{equation}

Let  $T<T(\gh a^+)$ be close enough to $T(\gh a^+)$ so that the solution $u(t)$ and its modulation are well-defined on $[T,S_n]$.
For $t\in [T,S_n]$, let
\begin{equation}\label{eq:N} 
	\mathcal{N}(\gh a^+(t))= \mathcal{N}(t)=  \left\| e^{(3/2) \sigma_0^{3/2} t} \g a^+(t) \right\|^2.
\end{equation}
Then, by \eqref{abound} and \eqref{vcont}, we have
\begin{equation}\label{eq:Np}
	\left|	\frac d{dt} \mathcal{N}(t) + (  2 e_0 c_j^{3/2}-3 \sigma_0^{3/2} ) \mathcal{N}(t)	\right| \leq
	C  e^{- (3/2) \sigma_0^{3/2} t} (\|v(t)\|_{L^2}^2 + e^{-3 \sigma_0^{3/2} t } )  \leq 
	C e^{- (1/2) \sigma_0^{3/2} t} .
\end{equation}
In particular, in view of the definition of $\sigma_0$ (see \eqref{sigma0}), for all $j$, $2 e_0 c_j^{3/2}-3 \sigma_0^{3/2} \geq e_0 c_1^{3/2}\geq 4 e_0 \sigma_0^{3/2},$
applying the previous estimate at $t=T(\gh a^+)$, and using $\mathcal{N}(T(\gh a^+))=1$, we get
\begin{equation}\label{eq:Nn}
	\forall \gh a^+\in B_{\m R^N}( e^{-(3/2) \sigma_0^{3/2} S_n}),\quad
	\frac d{dt} \mathcal{N}(T(\gh a^+))  \leq - 4 e_0 \sigma_0^{3/2}.
\end{equation}

From \eqref{eq:Nn}, a standard argument says that the map $\gh a^+ \mapsto T(\gh a^+)$ is continuous. 
Indeed, by \eqref{eq:Nn}, for all $\epsilon>0$, there exists $\delta>0$ such that $\mathcal{N}(T(\gh a^+) - \e) > 1+\delta$ and $\mathcal{N}(T(\gh a^+) + \e) < 1-\delta$.
By continuity of the flow of the (gKdV) equation, it follows that there exist $\eta >0$ such that for all $\| \tilde {\gh a}^+ -\gh a^+\|\leq  \eta$, 
the corresponding  $\tilde {\g a}^+(t)$ satisfies $|\mathcal{N}(\tilde {\g a}^+(t)) - \mathcal{N}(\g a^+(t))|\leq  \delta/2$ for all $t \in [T(\gh a^+)-\epsilon, S_n]$. In particular, $T(\gh a^+) - \epsilon \le T(\tilde {\gh a}^+) \le T(\gh a^+) + \epsilon$.

Now, we consider the continuous map
\begin{equation*}\begin{split}
	\mathcal{M} \ : \quad  B_{\m R^N}( e^{-(3/2) \sigma_0^{3/2} S_n}) & \to \m S_{\m R^N}( e^{-(3/2) \sigma_0^{3/2} S_n}),\\
		\gh a^+ & \mapsto  e^{-(3/2) \sigma_0^{3/2} (S_n-T(\gh a^+))} \g a^+(T(\gh a^+)) ).
\end{split}\end{equation*}
Let $\gh a^+ \in \m S_{\m R^N}(e^{-(3/2) \sigma_0^{3/2} S_n})$. From \eqref{eq:Nn}, it follows that $T(\gh a^+) = S_n$ and
so $\mathcal{M}(\gh a^+)=\gh a^+$, which means that $\mathcal{M}$ restricted to $\m S_{\m R^N}(e^{-(3/2) \sigma_0^{3/2} S_n})$ is the identity.
But the existence of such a map $\mathcal{M}$ contradicts Brouwer's fixed point theorem.

In conclusion, there exists $\gh a^+ \in B_{\m R^N}( e^{-(3/2) \sigma_0^{3/2} S_n})$ such that $T(\gh a^+) = T_0$.
\end{proof}

The end of this section is devoted to the proof of Lemma \ref{vcontrol}.

\begin{proof}[Proof of Lemma \ref{vcontrol}]
Define 
\begin{equation} \label{psi0}\begin{split}
& \psi (x) = \frac 2 \pi \arctan(\exp(- \sqrt{\sigma_0} x)), \quad \psi_j(t,x) = \psi\left(\frac 1{\sqrt{t}} (x - m_j(t))\right), 	\quad 
	\psi_N (t) =1,\\
& \text{where  for $j=
1,\ldots,N-1$}, \quad
m_j(t) = \frac 12 \left((c_j + c_{j+1})  t +  y_j + y_{j+1}  \right);
\\
& \phi_1 = \psi_1, \quad \phi_j = \psi_{j} - \psi_{j-1}, \quad \text{for $j=
1,\ldots,N$;}\\
& M_j(t) = \int u^2(t) \phi_j(t), \quad E_j(t) = \int \left( \frac{1}{2} u_x^2 - \frac{1}{p+1} u^{p+1} \right)(t) \phi_j(t). 
\end{split}\end{equation}
We begin with some technical claims.

\begin{claim}  
\begin{equation}\label{flocal}
\left| \frac{d}{dt} M_j(t)  \right| \le \frac{C}{\sqrt{t}} \|  v(t) \|_{H^1}^2 + C e^{- 3 \sigma_0^{3/2} t},
\end{equation}
\begin{equation} \label{localcontrol}
\left| \frac{d}{dt}  \sum_{j=1}^N  \left(E_j(t) + \frac{c_j}{2} M_j(t) \right)  \right|\le \frac{C}{\sqrt{t}} \|  v(t) \|_{H^1}^2 + C e^{-3 \sigma_0^{3/2} t}.
\end{equation}
\end{claim}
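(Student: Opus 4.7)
The plan is to observe that both estimates reduce to the same Kato-type local $L^2$ mass computation. For \eqref{flocal} this is immediate. For \eqref{localcontrol} I exploit the identity $\sum_{j=1}^N \phi_j = \psi_N \equiv 1$: combined with mass and energy conservation for the (gKdV) flow, this gives $\sum_j E_j(t) = E(u(t))$ and $\sum_j M_j(t) = \int u^2$, both constant in time. Since
\[ \sum_{j=1}^N c_j \phi_j = c_N - \sum_{j=1}^{N-1}(c_{j+1}-c_j)\,\psi_j, \]
we get
\[ \frac{d}{dt}\sum_{j=1}^N\Bigl(E_j + \tfrac{c_j}{2} M_j\Bigr) = -\tfrac{1}{2}\sum_{j=1}^{N-1}(c_{j+1}-c_j)\,\frac{d}{dt}\!\int u^2 \psi_j, \]
so \eqref{localcontrol} reduces to exactly the same kind of bound as \eqref{flocal}, with $\phi_j$ replaced by $\psi_j$. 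This neatly bypasses any attempt at a local energy identity, which for (gKdV) would involve $u_{xx}^2$ and hence $H^2$-type norms we do not control.

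Next, I apply Kato's identity: for any smooth $\chi(t,x)$,
\[ \frac{d}{dt}\int u^2 \chi = -3\int u_x^2 \chi_x + \int u^2 \chi_{xxx} + \frac{2p}{p+1}\int u^{p+1} \chi_x + \int u^2 \chi_t. \]
From the explicit definitions, each of $\chi_x,\chi_{xxx},\chi_t$ is pointwise bounded by $C/\sqrt{t}$ times a Schwartz function of $(x-m_{j-1})/\sqrt t$ or $(x-m_j)/\sqrt t$. Their supports are $O(\sqrt t)$-neighborhoods of the transition midpoints $m_j(t)$, which lie at distance at least $(c_{k+1}-c_k)\,t/2$ from every soliton center. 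By \eqref{sigma0}, $\sqrt{c_k}(c_{k+1}-c_k) \ge 8\sigma_0^{3/2}$, so on these supports every $\tilde R_k$ satisfies $|\tilde R_k(t,x)| \le C e^{-4\sigma_0^{3/2} t}$.

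I then write $u = \tilde R + v$ and expand each integrand. Three types of terms arise. The pure $v$ contributions are bounded by $\|\chi_x\|_{L^\infty}\|v\|_{H^1}^2 \le \tfrac{C}{\sqrt t}\|v\|_{H^1}^2$, using additionally $\|v\|_{L^\infty}^{p-1} \le C$ via Sobolev and the a priori bound $\|v\|_{H^1}\le 1$ from Definition \ref{def} for the nonlinear term. The pure $\tilde R$ contributions are bounded by $\sup|\tilde R|^2 \cdot \|\chi_x\|_{L^1} \le C e^{-8\sigma_0^{3/2}t} \le C e^{-3\sigma_0^{3/2} t}$, since $\|\chi_x\|_{L^1} \le C$ (width $\sqrt t$ times height $1/\sqrt t$). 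Mixed terms are split into these two types via $|ab| \le \tfrac12 a^2 + \tfrac12 b^2$. The $\int u^2 \chi_t$ term is handled the same way, after writing $\chi_t = -\dot m_j\,\partial_x\chi + O(1/t)$ on the support with $\dot m_j = \tfrac12(c_j+c_{j+1}) + O(\|v\|_{L^2} + e^{-2\sigma_0^{3/2}t})$ by \eqref{rien}; the $\|v\|_{L^2}$ drift factor is absorbed into the $\|v\|_{H^1}^2$ bound.

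The main point to watch is the consistent combination of the $1/\sqrt{t}$ weights with the exponential separation between solitons; once the reduction in the first paragraph is in place, the whole claim becomes a single Kato-type computation carried out twice, and I do not foresee any deeper obstacle.
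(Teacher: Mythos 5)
Your reduction of \eqref{localcontrol} is fine and is essentially the paper's own: the paper also disposes of the energy part by $\sum_j\phi_j=1$ and conservation of $E$, and then invokes \eqref{flocal}; your Abel summation $\sum_j c_j\phi_j=c_N-\sum_{j<N}(c_{j+1}-c_j)\psi_j$ together with mass conservation is just an explicit way of writing the same thing. Likewise, the Kato identity, the expansion $u=\tilde R+v$, the bound $\frac{C}{\sqrt t}\|v\|_{H^1}^2$ for the quadratic-in-$v$ terms, and the treatment of the $\chi_t$ term via \eqref{rien} all match the paper's proof.

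The gap is in your treatment of the soliton contributions. The functions $\chi_x,\chi_{xxx},\chi_t$ are \emph{not} compactly supported in $O(\sqrt t)$-neighborhoods of the $m_j$: with the definition \eqref{psi0}, $\psi'$ vanishes nowhere and $|\partial_x\psi_j(t,x)|\sim t^{-1/2}e^{-\sqrt{\sigma_0}|x-m_j(t)|/\sqrt t}$, i.e.\ the tails decay only on the spatial scale $\sqrt t$. At the center of the $k$-th soliton, whose distance to $m_j$ is of order $(c_{k+1}-c_k)t/2$, the cutoff derivatives are therefore of size $t^{-1/2}e^{-c\sqrt t}$ (with $c$ of order $\sigma_0^{3/2}$), which is small but enormously larger than $e^{-3\sigma_0^{3/2}t}$. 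Hence the step ``on these supports every $\tilde R_k$ satisfies $|\tilde R_k|\le Ce^{-4\sigma_0^{3/2}t}$'', and with it your bound $Ce^{-8\sigma_0^{3/2}t}$ for the pure-$\tilde R$ and mixed terms, does not follow: a term such as $\int (\partial_x\tilde R_k)^2\,|\phi_{j,x}|$ is not controlled by any support/separation argument, since the cutoff derivative is nonzero (and only $e^{-c\sqrt t}$-small) exactly where $\tilde R_k$ lives. This is precisely the delicate point of the claim; the paper handles it by asserting the pointwise product estimate \eqref{mach} on $|\tilde R_k|(|\phi_{j,x}|+|\phi_{j,xxx}|+|\phi_{j,t}|)$, which your argument neither reproduces nor replaces — and note that even \eqref{mach} must be examined at points near the soliton centers, where the only available smallness is the $e^{-c\sqrt t}$ tail of the $\sqrt t$-scaled cutoff, so the claimed $e^{-3\sigma_0^{3/2}t}$ cannot be recovered by the separation heuristic you use; reaching an exponential-in-$t$ error there requires cutoffs whose transition width does not grow like $\sqrt t$, or a genuinely different accounting of these interaction terms. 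As written, therefore, your proposal does not establish the stated error terms in \eqref{flocal}--\eqref{localcontrol}.
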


\begin{proof} By direct computations,
\begin{align*}
\frac{d}{dt} \int u^2 \phi_j & = -3 \int u_x^2 {\phi_j}_x + \int u^2 \left({\phi_j}_{xxx} + {\phi_j}_t \right) + \frac{2p}{p+1} \int u^{p+1} {\phi_j}_x.
\end{align*}
By the decay properties of  $\phi_j(t)$ and  $\tilde R_j(t)$, for all $k$,
\begin{equation}\label{mach}
	| \tilde R_k| ( |{\phi_j}_x | + |{\phi_j}_{xxx} | + |{\phi_j}_t| ) \le Ce^{-3 \sigma_0^{3/2} t} e^{- \sigma_0 |x -c_k t -x_k|}\; \quad |{\phi_j}_x | + |{\phi_j}_{xxx} | + |{\phi_j}_t| \le \frac{C}{\sqrt{t}}.
\end{equation}
Thus, expanding $u(t) = \tilde R(t) +  v(t)$,  the first two integrals are estimated as desired. For the last term it suffices to observe that $\|  u(t) \|_{L^\infty} \le C ( \| v(t) \|_{H^1} + \|  \tilde R(t) \|_{H^1})\leq C $. This proves \eqref{flocal}.

Estimate  \eqref{localcontrol} is a consequence of \eqref{flocal},  the conservation of energy and  $\sum_{j=1}^N \phi_j =1$.
\end{proof}

\begin{claim} 
\begin{equation}\label{mainterms3}
  \left| \left( E_j(t) +
\frac{c_j} 2 M_j(t) \right) - \left(E(Q_{c_j}) + \frac{c_j} 2 \int
Q_{c_j}^2 \right) - \frac 1 2 H_j(t) \right|   \le  C  e^{- 3 \sigma_0^{3/2} t}
+  C e^{-\sigma_0^{3/2} t}  \|v(t)\|_{L^2}^2,  
\end{equation}
where $\displaystyle H_j(t) = \int (v_x^2(t) - p \tilde
R_j^{p-1} (t) v^2(t) + c_j v^2(t) ) \phi_j(t)$.
\end{claim}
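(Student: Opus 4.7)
The plan is to Taylor-expand $E_j(t)+\frac{c_j}{2}M_j(t)$ in powers of $v=u-\tilde R$ around the approximate multi-soliton $\tilde R$, exploiting that $\phi_j$ is essentially $1$ near the $j$-th soliton trajectory and exponentially small on the support of $\tilde R_k$ for $k\ne j$. Substituting $u=\tilde R+v$ into
\[E_j(t)+\tfrac{c_j}{2}M_j(t)=\int\Bigl(\tfrac12 u_x^2-\tfrac{1}{p+1}u^{p+1}+\tfrac{c_j}{2}u^2\Bigr)\phi_j,\]
I split the integrand according to the power of $v$: the pure-$\tilde R$ terms, the term linear in $v$, the quadratic piece, and a remainder of order $\ge 3$ in $v$.

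For the pure-$\tilde R$ piece, I first replace $\tilde R$ by $\tilde R_j$ on the support of $\phi_j$. By the same mechanism as in \eqref{mach}, the spatial separation of the solitons together with the choice \eqref{sigma0} of $\sigma_0$ yields $\|\tilde R_k\phi_j\|_{L^\infty\cap L^2}\le Ce^{-3\sigma_0^{3/2}t}$ for $k\ne j$, controlling all cross terms. Next, $1-\phi_j$ is supported where $\tilde R_j$ itself decays exponentially on the same scale, so replacing $\phi_j$ by $1$ adds an error of the same order. What remains is exactly $E(\tilde R_j)+\frac{c_j}{2}\int\tilde R_j^2=E(Q_{c_j})+\frac{c_j}{2}\int Q_{c_j}^2$ by translation invariance.

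The linear-in-$v$ piece reads $\int(\tilde R_x v_x-\tilde R^p v+c_j\tilde R v)\phi_j$. Replacing $\tilde R$ by $\tilde R_j$ costs at most $Ce^{-3\sigma_0^{3/2}t}\|v\|_{L^2}$, which by Young's inequality is bounded by $Ce^{-3\sigma_0^{3/2}t}+Ce^{-\sigma_0^{3/2}t}\|v\|_{L^2}^2$. Integrating by parts the $v_x$ term produces $\int(-\tilde R_{j,xx}-\tilde R_j^p+c_j\tilde R_j)v\phi_j$, which vanishes identically since $\tilde R_j(t,\cdot)$ satisfies the soliton equation $\tilde R_{j,xx}+\tilde R_j^p=c_j\tilde R_j$ at every fixed $t$, together with a commutator $-\int\tilde R_{j,x}v\phi_{j,x}$; the latter is exponentially small because $\phi_{j,x}$ is supported away from the peak of soliton $j$, where $\tilde R_{j,x}$ decays like $e^{-3\sigma_0^{3/2}t}$ in $t$.

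The quadratic piece is, by Taylor expansion, exactly $\frac12\int(v_x^2-p\tilde R^{p-1}v^2+c_j v^2)\phi_j$, and replacing $\tilde R^{p-1}$ by $\tilde R_j^{p-1}$ costs at most $Ce^{-\sigma_0^{3/2}t}\|v\|_{L^2}^2$, so this term equals $\frac12 H_j(t)$ up to an acceptable error. Finally, the remainder of order $\ge 3$ in $v$ is controlled using the pointwise estimate $|u^{p+1}-\tilde R^{p+1}-(p+1)\tilde R^p v-\tfrac{p(p+1)}{2}\tilde R^{p-1}v^2|\le C(\tilde R^{p-2}|v|^3+|v|^{p+1})$; integration against $\phi_j$ and the Sobolev embedding $H^1\hookrightarrow L^\infty$ give a bound $C\|v\|_{L^\infty}\|v\|_{L^2}^2\le Ce^{-\sigma_0^{3/2}t}\|v\|_{L^2}^2$ via the $H^1$ bound on $v$ from Definition \ref{def}. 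The main technical point is the careful bookkeeping of every commutator with $\phi_j$ and every replacement of $\tilde R$ by $\tilde R_j$: each such operation must be shown to pick up either the factor $e^{-3\sigma_0^{3/2}t}$ from $\|\tilde R_k\phi_j\|_\infty$ for $k\ne j$ or from $\|\tilde R_j\phi_{j,x}\|_\infty$, which is precisely the content of the choice \eqref{sigma0}.
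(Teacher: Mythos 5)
Your proof is correct and follows essentially the same route as the paper: expand $u=\tilde R+v$ in $E_j+\frac{c_j}{2}M_j$, use the soliton separation estimates of type \eqref{mach} to replace $\tilde R$ by $\tilde R_j$ and $\phi_j$ by $1$ in the leading terms, kill the linear term via the equation $(Q_{c_j})_{xx}+Q_{c_j}^p=c_jQ_{c_j}$, and bound the cubic and higher terms through $\|v\|_{L^\infty}\le C\|v\|_{H^1}\le Ce^{-\sigma_0^{3/2}t}$. The only (cosmetic) difference is that you expand the combined functional directly, so the linear term vanishes after integration by parts, whereas the paper expands $M_j$ and $E_j$ separately in \eqref{mainterms1}--\eqref{mainterms2} and observes the cancellation of the terms $\int v\,\tilde R_j$ when summing.
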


\begin{proof}
	First, we claim
	\begin{align}
	 &\left| M_j(t) - \left( \int Q_{c_j}^2 + 2 \int
	v(t) \tilde R_j(t) + \int  v^2(t) \phi_j(t) \right) \right|
	\le C  e^{- 3 \sigma_0^{3/2} t},    \label{mainterms1} \\
	& \left| E_j(t) - E(Q_{c_j}) - \left( \frac 1 2 \int
	(v_x^2(t) - p \tilde R_j^{p-1} (t)
	v^2(t) ) \phi_j(t) - c_j \int v(t)
	\tilde R_j(t) \right) \right|  \nonumber \\
	& \quad  \le 
	C  e^{- 3 \sigma_0^{3/2} t} + C e^{-\sigma_0^{3/2} t} \|v(t)\|^2_{L^2},  \label{mainterms2} 
	\end{align}
	Indeed,
expanding $u(t)=v(t)+\sum_k \tilde R_k(t)$ in $M_j(t)$  and $E_j(t)$, we get
\[ M_j(t) = \int u_n^2 \phi_j(t) = \int \left( v^2 + 2 v
\tilde R + \sum_{k=1}^N \tilde R_k^2 \right) \phi_j(t), \]
\begin{align*}
E_j(t) & = \int  \left( \frac 1 2 ( v_x^2 + 2
v_x \tilde R_x + \tilde R_x^2 ) - \frac 1 {p+1} (v +
\tilde
R)^{p+1} \right) \phi_j(t) \\
& = \int \left ( \frac 1 2 (v_x^2 - p \tilde R^{p-1}
v^2)\right) \phi_j +
\int \left( \frac  1 2 \tilde R_x^2 - \frac 1 {p+1} \tilde R^{p+1} \right)
\phi_j(t) \\
& \quad  - \int v (\tilde R_{xx} + \tilde R^p) \phi_j
- \int \tilde R_x v {\phi_j}_x \\
& \quad  + \frac 1{p+1} \int \left(  (-(v + \tilde R)^{p+1} +
\tilde R^{p+1}) + (p+1)v \tilde R^p +  (p+1)p \tilde
R^{p-1} v^2 \right)  \phi_j(t).
\end{align*}

By the decay properties of  $\phi_j(t)$ and  $\tilde R_j(t)$ we have ($k \ne j$)
\[ \left| \int \tilde R_j^2 \phi_j(t) - \int Q_{c_j}^2 \right|
+ \int \tilde R_k^2
\phi_j(t) +
\left| \int \left( \frac  1 2 \tilde R_x^2 - \frac 1 {p+1} \tilde R^{p+1} \right)
\phi_j(t) - E(Q_{c_j})\right|\leq C e^{-3\sigma_0^{3/2} t}.
\]
By $Q_{xx} + Q^p = Q$, we have
\[  \int v(t) (\tilde R_{xx} + \tilde R^p) \phi_j = c_j
\int v(t) \tilde R_j(t) + O(e^{- 3 \sigma_0^{3/2}
t}). \]
Using also \eqref{mach} and for $k\geq 3$
\[ \int |v(t)|^k \phi_j(t) \le \| v(t) \|_{L^\infty}^{k-2} \int
v(t)^2 \phi_j(t)\leq  C e^{- 3 \sigma_0^{3/2} t} \|v\|_{L^2}^2,\]
we obtain  \eqref{mainterms1} and \eqref{mainterms2}.

\medskip

\noindent Estimate \eqref{mainterms3} is obtained by summing   \eqref{mainterms1} and
\eqref{mainterms2}. Note that in particular that the scalar 
products $\int v(t) \tilde R_j(t)$ cancel.
\end{proof}

\begin{claim}
\begin{equation}\label{variation}
	\exists K>0,\ \forall v\in H^1,\quad
	\|v(t)\|_{H^1}^2 \leq K \sum_{j} H_j(t) + K^2 \sum_{j} \left ( \left( \int v(t) \tilde Z^+_j(t) \right)^2 + \left( \int v(t) \tilde Z^-_j(t)
	\right)^2 \right).
\end{equation}
\end{claim}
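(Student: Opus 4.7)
The plan is to reduce this multi-soliton coercivity estimate to the single-soliton statement of Lemma~\ref{ycoer} by localizing near each soliton and rescaling. Since $\sum_{j=1}^N \phi_j = 1$, one has
\[
\|v(t)\|_{H^1}^2 = \sum_{j=1}^N \int (v_x^2 + v^2)(t)\, \phi_j(t)\, dx,
\]
so it suffices to bound each $\int (v_x^2 + c_j v^2)\phi_j$ by a constant multiple of $H_j$ plus $K^2\bigl((v,\tilde Z_j^+)^2 + (v,\tilde Z_j^-)^2\bigr)$, modulo errors decaying in $t$ that are absorbable for $T_0$ large.

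For fixed $j$, I would perform the soliton-centered rescaling $y = \sqrt{c_j}\,(x - c_j t - x_j - y_j(t))$ and set $w_j(y) = c_j^{-1/(p-1)} v(x)$. Under this change of variables, $\tilde R_j$ becomes $c_j^{1/(p-1)} Q(y)$ and $\tilde Z_j^\pm$ becomes $c_j^{1/(p-1)} Z^\pm(y)$, while the modulation orthogonality $\int v (\tilde R_j)_x = 0$ translates into $(w_j, Q_y) = 0$. Introduce a smooth cutoff $\chi_j(y)$ equal to $1$ on a large ball around the origin (of radius $\sim \sqrt{c_j t}/2$) and vanishing outside the rescaled support of $\phi_j$, with $|\chi_j'| \lesssim 1/\sqrt t$. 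Applying Lemma~\ref{ycoer} to $\tilde w_j := \chi_j w_j$ yields
\[
(L\tilde w_j,\tilde w_j) \geq \lambda\|\tilde w_j\|_{H^1}^2 - \tfrac{1}{\lambda}\bigl((\tilde w_j,Z^+)^2 + (\tilde w_j,Z^-)^2 + (\tilde w_j,Q_y)^2\bigr).
\]
The orthogonality $(w_j, Q_y)=0$ combined with $\chi_j \equiv 1$ on the essential support of the exponentially decaying $Q_y$ gives $(\tilde w_j, Q_y) = O(e^{-\delta t})$; similarly, $(\tilde w_j, Z^\pm)$ equals $c_j^{-1/(p-1)}(v, \tilde Z_j^\pm)$ up to an exponentially small error.

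Reverting the rescaling, the right-hand side above yields the desired lower bound on $\int(v_x^2 + c_j v^2)\phi_j$, while the left-hand side is essentially $H_j$, modulo errors of order $\int|\chi_j'|^2 v^2 \sim (1/t)\int v^2 \phi_j$ from the cutoff derivatives, together with cross-nonlinearity terms $\int p \tilde R_k^{p-1} v^2 \phi_j$ for $k \ne j$ which are bounded by $Ce^{-\delta t}\|v\|_{L^2}^2$ thanks to \eqref{sigma0} and the exponential decay of $Q$. Summing over $j$ and choosing $T_0$ large enough to absorb these $t$-decaying errors into the leading $H^1$ norm would establish the claim. The principal technical obstacle is the careful bookkeeping of these cutoff and partition-of-unity errors to ensure they remain strictly subordinate to the coercivity constant $\lambda$; this is achieved via the explicit $1/t$ and $e^{-\delta t}$ factors, so that a single (large) choice of $T_0$ makes all errors absorbable and yields a constant $K$ depending only on $\{c_j\}$, $Q$, $Z^\pm$, and $\lambda$.
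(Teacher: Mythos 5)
Your localization-and-rescaling argument is exactly the standard proof the paper invokes here (it simply cites Lemma~4 of \cite{MMT02} as a ``standard consequence'' of Lemma~\ref{ycoer} and the orthogonality $\int v\,(\tilde R_j)_x=0$), so you have essentially reproduced the paper's approach. The only point to watch in the bookkeeping is the region where $\phi_j\approx 1$ but your cutoff $\chi_j$ vanishes: there one does not invoke coercivity at all, but uses that $\tilde R_j^{p-1}$ is exponentially small, so the integrand of $H_j$ already controls $v_x^2+c_j v^2$ directly on that set.
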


\begin{proof} Estimate \eqref{variation} is a standard consequence of Lemma \ref{ycoer} and $\int v {\/ \tilde R_j}_x= 0$.
See e.g. \cite[Lemma 4]{MMT02}.
\end{proof}

Now, we finish the proof of  Lemma \ref{vcontrol}. Let $t\in [T(\gh a^+),S_n]$.
Integrating \eqref{localcontrol} on $[t,S_n]$,
\[ \left| \sum_{j=1}^N \left\{ \left(E_j(S_n)+\frac {c_j}2 M_j(S_n)\right)   - \left(E_j(t)+\frac {c_j}2 M_j(t)\right) \right\} \right| \le C e^{-3 \sigma_0^{3/2} t} + C \int_t^{S_n} \| v(s) \|_{H^1}^2 \frac{ds}{\sqrt{s}}. \]
From \eqref{mainterms3}, we get :
\[ \left| \sum_{j=1}^N (H_j(S_n) - H_j(t)) \right| \le  C e^{-3 \sigma_0^{3/2} t} + C e^{-\sigma_0^{3/2}t}( \| v(t) \|_{L^2}^2 + \| v(S_n) \|_{L^2}^2) + C \int_t^{S_n} \| v(s) \|_{H^1}^2 \frac{ds}{\sqrt{s}}. \]
Note that from Lemmas \ref{modulation} and \ref{finaldata}, and from the definition of $T(\gh a^+)$,
\[ |H_j(S_n)| \le C \|v(S_n)\|_{H^1}^2 \leq C \|\gh b\|^2  \le C e^{-3 \sigma_0^{3/2} t} \quad \text{and}\quad  \| v(t) \|_{L^2}^2 \le C e^{-2\sigma_0^{3/2} t}. \]
 
By   \eqref{variation} and the above estimates
\begin{align}
& \| v(t) \|_{H^1}^2   \le K \sum_{j=1}^N H_j(t) + K^2 \sum_{j,\pm} a_j^\pm(t)^2 \nonumber \\
& \le C e^{-3 \sigma_0^{3/2} t} + C \sum_{j,\pm} a_j^\pm(t)^2 + C \int_t^{S_n} \| v(s) \|_{H^1}^2 \frac{ds}{\sqrt{s}}  \le C_0 e^{-3 \sigma_0^{3/2} t} + \frac{C_0}{\sqrt{t}} e^{-2 \sigma_0^{3/2} t}. \label{vdecay}
\end{align}
Hence, for $T_0$ large enough so that  $C_0e^{- \sigma_0^{3/2} T_0} \le 1/8$ and  $C_0/{\sqrt{T_0}} \le 1/8$  we get
\[ e^{\sigma_0^{3/2} t} \| v(t) \|_{H^1} \le 1/2. \]
By \eqref{rien} and \eqref{vdecay},
\begin{align}
\|\g y_t(t)\| & \le C e^{-  2 \sigma_0^{3/2} t} + C \|v(t)\|_{L^2},   \nonumber \\
\|\g y(t)\| & \le |\g y (S_n)| + C \int_{t}^{S_n} \left( e^{- (3/2) \sigma_0^{3/2} s} + \frac{e^{-\sigma_0^{3/2} s}}{\sqrt{s}} \right) ds \le C e^{- (3/2) \sigma_0^{3/2} t} + \frac{C}{\sqrt{t}} e^{-\sigma_0^{3/2} t}, \label{ydecay}
\end{align}
and we deduce $e^{\sigma_0^{3/2} t} \| \g y(t) \| \le 1/2$ by possibly taking a larger $T_0$.
Finally, we have~:
\begin{align}
\| u(t) - R(t) \|_{H^1} & \le \| R(t) - \tilde R(t) \|_{H^1} + \| v(t) \|_{H^1} \le C \|\g y(t)\| + \| v(t) \|_{H^1} \nonumber \\
& \le C e^{-\sigma_0^{3/2} t} \le \e_0/2, \label{udecay}
\end{align}
by possibly taking a larger $T_0$. This concludes the proof of Lemma \ref{vcontrol}.
\end{proof}

\section{Generalizations}

\subsection{The gKdV equations with general nonlinearities}\label{sec:gkdv}

We now present  extensions of Theorem \ref{nsoliton} to a  more general form of the KdV equation, i.e.
\begin{equation}
\label{fkdv} \tag{gKdV}
u_t + (u_{xx} + f(u) )_x =0, \quad (t,x) \in \m R \times \m R.
\end{equation}

In order to have both well-posedness in $H^1$ from \cite{KPV93} and the existence of eigenvalues for the linearized operator in the instable case from \cite{PW92}, we assume
\begin{equation}\label{assump}
	\text{$f$ is $C^3$, convex for $u>0$, and $f(0)=f'(0)=0$,}
\end{equation}
but these assumptions can probably be relaxed.
Concerning the solitons, 
we consider velocities $c_j>0$ such that 
\begin{equation}\label{nondeg}
	\text{a solution $Q_c$ of $(Q_c)_{xx} + f(Q_c) = cQ_c$ exists for all $c$ close to $c_j$ and} \quad {\frac d{dc} \int Q_c^2}_{|c=c_j}\ne 0.
\end{equation}
Then, combining the proof of Theorem \ref{nsoliton} and \cite{Mar05}, we claim the following extension of Theorem~\ref{nsoliton}.

\begin{thm}\label{ggkdv}
Let $0<c_1<\ldots < c_N$ and $x_1, \ldots, x_N \in \m R$ be such that for all $j$, \eqref{nondeg} holds.
There exist $T_0 \in \m R$, $C,\sigma_0>0,$ and a solution $u \in \q C([T_0,\infty), H^1)$ to \eqref{gkdv} such that
\[ \forall t \in [T_0,\infty), \qquad \left\| u(t) - \sum_{j=1}^N R_{c_j,x_j}(t) \right\|_{H^1} \le C e^{-\sigma_0^{3/2} t}. \]
\end{thm}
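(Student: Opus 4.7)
The plan is to merge the argument of Theorem \ref{nsoliton} with the one of \cite{Mar05}. Partition the index set into
\[
\mathcal{I}_- = \Bigl\{ j : {\tfrac{d}{dc}\int Q_c^2}_{|c=c_j} < 0 \Bigr\}, \qquad \mathcal{I}_+ = \Bigl\{ j : {\tfrac{d}{dc}\int Q_c^2}_{|c=c_j} > 0 \Bigr\}.
\]
For each $j \in \mathcal{I}_-$, Pego--Weinstein \cite{PW92} (whose hypothesis is exactly \eqref{nondeg} with negative sign) provides eigenfunctions $Y_j^\pm \in \mathcal{S}(\m R)$ of $\partial_x L_j$, with $L_j v = -v_{xx} - f'(Q_{c_j}) v + c_j v$ and eigenvalues $\pm e_0^{(j)}$, $e_0^{(j)} > 0$, together with their dual functions $Z_j^\pm = L_j Y_j^\pm$; the coercivity Lemma \ref{ycoer} holds verbatim for each such $L_j$. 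For $j \in \mathcal{I}_+$, $L_j$ is positive modulo $(Q_{c_j})_x$ and $\partial_c Q_c|_{c = c_j}$, exactly as in the subcritical case \cite{Wei86, Mar05}.

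Given a sequence $S_n \to +\infty$, consider the solution $u_n$ to \eqref{fkdv} with final data
\[
u_n(S_n) = R(S_n) + \sum_{j \in \mathcal{I}_-,\,\pm} \gh b_{j,n}^\pm \, Z_j^\pm(S_n), \qquad \|\gh b_n\| \le e^{-\sigma_0^{3/2} S_n},
\]
where $\sigma_0$ is defined as in \eqref{sigma0}, taking the minimum also over the constants $e_0^{(j)}$. Modulate $u_n$ around $R$ with translation parameters $y_j(t)$ for all $j$, and in addition with scaling parameters $c_j(t)$ for $j \in \mathcal{I}_+$; the orthogonality conditions are $(v, (\tilde R_j)_x) = 0$ for all $j$, and $(v, \tilde Q_{c_j}) = 0$ for $j \in \mathcal{I}_+$ (the latter controlling the scaling eigen-direction, which is non-coercive only in the stable case). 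For $j \in \mathcal{I}_-$, one tracks $a_j^\pm(t) = \int v(t) \tilde Z_j^\pm(t)$ as in Section \ref{sec:2.3}; by an adaptation of Lemma \ref{finaldata}, the $2|\mathcal{I}_-|$ parameters $\gh b_n$ can be chosen so that $\g a^+(S_n) = \gh a^+$ and $\g a^-(S_n) = 0$ for any prescribed $\gh a^+ \in B_{\m R^{|\mathcal{I}_-|}}(e^{-(3/2)\sigma_0^{3/2} S_n})$.

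The bootstrap set of Definition \ref{def} is extended to require in addition a bound of the form $|c_j(t) - c_j| \le e^{-\sigma_0^{3/2} t}$ for $j \in \mathcal{I}_+$, in line with \cite{Mar05}. One then combines the localized conservation laws $M_j$, $E_j$ built from the cutoffs $\phi_j$ of \eqref{psi0} with a patchwork coercivity estimate obtained by applying the stable coercivity near each $j \in \mathcal{I}_+$ and Lemma \ref{ycoer} near each $j \in \mathcal{I}_-$; this yields, as in Lemma \ref{vcontrol}, the estimate $\|v(t)\|_{H^1} + \|\g y(t)\| + \sum_{j \in \mathcal{I}_+} |c_j(t) - c_j| \lesssim e^{-\sigma_0^{3/2} t}$ on $[T(\gh a^+), S_n]$ under the bootstrap assumption on $\g a^\pm$. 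The decay of $\g a^-$ is automatic from its linear equation as in Lemma \ref{le:a-}. Finally, the selection of $\gh a^+$ with $T(\gh a^+) = T_0$ proceeds by the Brouwer argument of Lemma \ref{le:a+} applied on the ball $B_{\m R^{|\mathcal{I}_-|}}(e^{-(3/2)\sigma_0^{3/2} S_n})$, and compactness yields the multi-soliton as in the proof of Theorem \ref{nsoliton}.

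The main obstacle is bookkeeping: one must verify that the dynamics $\dot c_j$ of the new scaling parameters ($j \in \mathcal{I}_+$) satisfy $|\dot c_j| \lesssim \|v\|_{H^1}^2 + e^{-2 \sigma_0^{3/2} t}$, as in \cite{Mar05}, and that the coupling terms they introduce in the equation \eqref{abound} for $a_j^\pm$ ($j \in \mathcal{I}_-$) are of quadratic or exponentially small size. Equivalently, the estimate $\frac{d}{dt} \mathcal{N}(T(\gh a^+)) \le -4 e_0 \sigma_0^{3/2}$ of \eqref{eq:Nn} must survive the addition of these terms, which essentially follows from the spatial separation of the stable and unstable soliton regions (controlled by $\phi_j$) and the exponential localization of $Z_j^\pm$. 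Once these checks are in place, the topological argument and the compactness step of Section \ref{sec:2.3} carry over unchanged.
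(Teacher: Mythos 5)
Your proposal is correct and is essentially the approach the paper has in mind: Theorem \ref{ggkdv} is stated there without proof, precisely as the combination of the eigenfunction/topological machinery of Theorem \ref{nsoliton} applied at the solitons with ${\frac d{dc}\int Q_c^2}_{|c=c_j}<0$ and the subcritical analysis of \cite{Mar05}, \cite{MMT02} at those with positive sign, which is exactly what you carry out (same modified final data, bootstrap, control of $\g a^\pm$, Brouwer argument in dimension $|\mathcal{I}_-|$, and compactness). Two minor caveats: Lemma \ref{ycoer} does not hold quite ``verbatim'' for general $f$ --- in its proof $Q^{\frac{p+1}{2}}$ must be replaced by the negative-eigenvalue ground state of $L_{c_j}$ (which exists by Sturm--Liouville theory), after which the argument is unchanged --- and your scaling modulation with orthogonality $(v,\tilde Q_{c_j})=0$ for $j\in\mathcal{I}_+$ is a legitimate variant (in the spirit of \cite{MMT02}) of \cite{Mar05}, where that direction is instead controlled through the almost-monotonicity of the localized mass; both devices rest exactly on hypothesis \eqref{nondeg}.
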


\begin{nb}
The critical case ${\frac d{dc} \int Q_c^2}_{|c=c_j}= 0$ is treated in \cite{Mar05} for the pure power case. We leave open the special case where for
a general $f(u)$, ${\frac d{dc} \int Q_c^2}_{|c=c_j}= 0$ for some $c_j$, but it probably can be treated by similar techniques.
 
From the techniques developped in \cite{Miz04}, \cite{Dik05} and \cite{DiM04} concerning the (BBM) equation 
\begin{equation}
\label{BBM} \tag{BBM}
	(u-u_{xx})_t + (u + u^p )_x =0, \quad (t,x) \in \m R \times \m R, 
\end{equation}
and from the construction of suitable eigenfunctions of the linearized equation by Pego and Weinstein \cite{PW92}  (see page 74),
one can also extend the results obtained in this paper to the (BBM) equation for any $p>1$.
\end{nb}

\subsection{The non linear Schr\"odinger equations}\label{sec:nls}

In this section, we sketch the proof of Theorem \ref{nsolitonnls}.
It is an extension of  the proof of Theorem~\ref{nsoliton} in the present paper and of the main result in \cite{MM06}.

\subsubsection{Preliminaries}
Let $v=v_1+iv_2$, we define the operator $\mathcal{L}$ by 
\begin{equation}
{\cal L} v  = - L_- v_2 + i L_+ v_1,
\end{equation}
where the self-adjoint operators $L_+$ and $L_-$ are defined by
\begin{equation}
\label{defL+L-}
L_+v_1:=-\Delta v_1+v_1-p Q^{p-1} v_1,\quad L_- v_2:=-\Delta v_2+v_2-Q^{p-1} v_2, 
\end{equation}
From \cite{Wei85}, \cite{Gri88} and  \cite{Schla06}, there exist $e_0>0$, ${\  Y}^\pm \in {\cal S}(\m R)$ (${\bar Y}^+ =Y^-$), normalized so that
$\|Y^\pm\|_{L^2}=1$ and such that
\begin{equation}\label{Ynls}
	{\cal L} Y^\pm  = \pm e_0 Y^\pm;
\end{equation}
moreover, for some $K>0$,
 for any $v=v_1+iv_2 \in H^1$ ($(f,g)=\Re \int f\bar g$)
\begin{equation}\label{orthoef}\begin{split}
\|v\|_{H^1}^2 & \leq K (L_+ v_1,v_1 ) +  K (L_- v_2, v_2) \\
& + K^2 \left( \int (\nabla Q) v_1\right)^2 + K^2 \left(\int Q v_2\right)^2+
  K^2 \left(\Im \int Y^+ \bar v \right)^2 + K^2 \left(\Im \int   Y^- \bar v \right)^2.
\end{split}\end{equation}
See  \cite{DM07,DRar} for the proof of \eqref{orthoef}.

\subsubsection{Proof of Theorem \ref{nsolitonnls} assuming uniform estimates}
We denote
\begin{equation}\label{Rnls}\begin{split}
&  R(t,x) = \sum_{j=1}^N R_j(t,x) \quad \text{where} \quad	R_j(t,x)  = R_{c_j,\gamma_j, v_j,x_j}(t,x),
\\
& Y^\pm_j(t,x) = c_j^{\frac{1}{p-1}} Y^\pm(\sqrt{c_j}(x - v_j t - x_j)) e^{i (\frac 12 v_j\cdot x - \frac 14 \|v_j\|^2 t + c_j t + \gamma_j)}.  
\end{split}\end{equation}
Let $S_n \to \infty$ be an increasing sequence of time. We claim the existence of final data giving suitable uniform estimates.
\begin{prop} \label{vndecaynls}
There exist $n_0\geq 0$, $\sigma_0>0, T_0>0, C >0$ (independent of $n$) such that the following holds. 
For each $n\geq n_0$, there exists $\gh b = (\gh b_{j,n}^\pm)_{j,\pm} \in \m R^{2N}$ with 
$\|\gh b\| \le  e^{-\sigma_0^{3/2} S_n},$
and such that the solution $u_n$ to 
\begin{equation} 
\label{nlsSn}
\left\{ \begin{array}{>{\displaystyle}l}
 i {u_n}_t + \Delta {u_n}  + |u_n|^{p-1}u_n =0, \vphantom{\sum_{j \in \{ 1,\ldots, N\}, \pm}} \\
u_n(S_n) = R(S_n) +  i \sum_{j \in \{ 1,\ldots, N\}, \pm} \gh b_{j,n}^\pm Y_j^\pm(S_n)
\end{array} \right.
\end{equation}
is defined on the interval $[T_0, S_n]$, and satisfies
\[ \forall t \in [T_0, S_n], \quad \left\| u_n(t) - R(t) \right\|_{H^1} \le C e^{-\sigma_0^{3/2} t}. \]
\end{prop}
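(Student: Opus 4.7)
The plan is to adapt the proof of Proposition \ref{vndecay} to the NLS setting, combining it with the monotonicity framework developed in \cite{MM06} for the $L^2$ subcritical multi-soliton construction. The goal is, for each $n$, to select $\gh b$ so that the modulation of the corresponding solution $u_n$ to \eqref{nlsSn} admits uniform exponential decay estimates on $[T_0, S_n]$; the deduction of Theorem \ref{nsolitonnls} from this Proposition then proceeds by the compactness argument of Steps 1--2 of the proof of Theorem \ref{nsoliton}.

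First I would set up the modulation analogue of Lemma \ref{modulation}. Since the kernel of $\q L$ around $Q$ corresponds to position and phase (namely $\nabla Q$ and $iQ$), I would introduce modulation parameters $\g y = (y_j)_j \subset \m R^d$ and $\g\mu = (\mu_j)_j \subset \m R$ producing $\tilde R_j$ by shifting $x_j\to x_j+y_j$ and $\gamma_j\to\gamma_j+\mu_j$ in \eqref{Rnls}. Setting $v = u - \tilde R$ with $\tilde R = \sum_j \tilde R_j$ and imposing the usual orthogonality $\Re\int v\,\overline{\partial_{x_k}\tilde R_j} = \Re\int v\,\overline{\partial_{\gamma_k}\tilde R_j}=0$ for $k=j$, define the eigenmode coefficients $a_j^\pm(t) = \Im \int Y_j^\pm(t)\,\overline{v(t)}$, the NLS counterparts of $\int v\,\tilde Z_j^\pm$. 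Lemma \ref{finaldata} adapts to produce, for any target $\gh a^+$, a unique $\gh b$ such that the modulation of $u_n(S_n)$ satisfies $\g a^+(S_n) = \gh a^+$ and $\g a^-(S_n) = 0$. Differentiating $a_j^\pm$ and using $\q L Y^\pm = \pm e_0 Y^\pm$ together with the exponential localization of $Y_j^\pm$ and the hypothesis $v_k\ne v_j$ (which forces exponentially small overlap between $Y_j^\pm$ and $R_k$ for $k\ne j$), I would obtain the NLS analogues of \eqref{abound} and \eqref{rien}, with $\sigma_0$ chosen small relative to $e_0 c_j^{3/2}$ and to the pairwise distances $|v_k-v_j|$.

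The central ingredient is the control of $v$. I would localize the three conservation laws (mass, linear momentum, energy) via cutoffs $\phi_j(t)$ built from hyperplanes moving at average velocity $\tfrac12(v_j+v_{j+1})$ along the direction $v_{j+1}-v_j$, as in \cite{MM06}. Monotonicity of local mass and of the projected local momentum provide the NLS analogue of \eqref{localcontrol}. Combined with the coercivity estimate \eqref{orthoef} (applied around each $\tilde R_j$), and expanding the localized conservation laws around $\tilde R$ while discarding soliton-soliton interactions (exponentially small thanks to distinct velocities), this yields an inequality of the form
\[ \|v(t)\|_{H^1}^2 \le C e^{-3\sigma_0^{3/2}t} + C\sum_{j,\pm}(a_j^\pm(t))^2 + C\int_t^{S_n}\|v(s)\|_{H^1}^2 \frac{ds}{\sqrt{s}}, \]
from which $\|v(t)\|_{H^1}\le \tfrac12 e^{-\sigma_0^{3/2}t}$ follows by a Gronwall-type argument, the NLS analogue of Lemma \ref{vcontrol}. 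Control of $\g a^-(t)$ is then obtained by backward integration of its ODE from $S_n$ using $\g a^-(S_n)=0$, as in Lemma \ref{le:a-}. Finally, the suitable value of $\gh a^+$ is extracted by the topological argument of Lemma \ref{le:a+}: the outgoing transversality analogous to \eqref{eq:Nn} follows from the ODE for $\g a^+$, and Brouwer's theorem rules out the existence of a continuous retraction from the ball onto its sphere.

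The main obstacle is establishing the monotonicity estimates for the localized mass/momentum/energy, which is genuinely more delicate than in the gKdV case because NLS has no preferred direction of propagation. However, the assumption $v_k \neq v_j$ is precisely what allows the \cite{MM06} construction of cutoffs orthogonal to $v_{j+1}-v_j$ translating at the mean velocity to yield a correctly-signed flux for both mass and the projected momentum; the critical vs.\ subcritical status of the nonlinearity plays no role in this monotonicity, since the coercivity deficiencies (instable/stable directions $Y^\pm$) are now handled separately through the $a_j^\pm$ variables. Modulo this monotonicity, the proof is essentially a transcription of the gKdV argument of Section \ref{sec:2}.
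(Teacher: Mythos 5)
Your overall scheme is the same as the paper's: modulate around $R(t)$ with translation and phase parameters, pair $v$ against the (dual) eigenfunctions to define $\g a^\pm$, choose $\gh b$ through the analogue of Lemma \ref{finaldata}, bootstrap $\|v\|_{H^1}$ via a localized conservation-law functional together with \eqref{orthoef}, integrate $\g a^-$ backwards from $\g a^-(S_n)=0$, and select $\gh a^+$ by the Brouwer argument. (One bookkeeping point: to get the closed ODE \eqref{Anls} one pairs with the \emph{opposite} eigenfunction under the symplectic form, $a_j^\pm=\Im\int \tilde Y_j^{\mp}\bar v$, not $\Im\int Y_j^\pm\bar v$.) The genuine gap is in the localization step. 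First, your cutoffs — hyperplanes orthogonal to $v_{j+1}-v_j$ moving at the mean velocity $\frac12(v_j+v_{j+1})$ — do not separate the solitons when $N\ge 3$: take $d=2$, $v_1=(0,0)$, $v_2=(1,0)$, $v_3=(0,1)$; since $\bigl(v_1-\frac12(v_2+v_3)\bigr)\cdot(v_3-v_2)=0$, the first soliton stays at \emph{bounded} distance from the moving hyperplane placed between $R_2$ and $R_3$, so $|\tilde R_1|\,|\nabla\phi_2|$ is not exponentially small and the interaction terms in $\frac{d}{dt}$ of the localized quantities are only $O(t^{-1/2})$, which destroys the exponential bootstrap. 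This is exactly why the paper first uses the rotation invariance of (NLS) to reduce to $v_{1,1}<v_{2,1}<\ldots<v_{N,1}$ (Claim 1, p.~855 of \cite{MM06}) and then takes cutoffs depending on the single variable $x_1$, moving at speeds $\frac12(v_{j,1}+v_{j+1,1})$; with this ordering every soliton drifts linearly in $x_1$ away from every transition region, and also the indexing ``consecutive solitons'' becomes meaningful. Without some such reduction your construction, taken literally, fails.

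Second, the estimate you need is not a ``correctly-signed flux'': for (NLS) the localized mass flux involves $\Im\int\bar u\,\nabla u\cdot\nabla\phi_j$ and has no sign, and the paper never uses monotonicity here. What is used is the functional
\[
\mathcal{G}(t)=\sum_j\Bigl(\int\bigl(\tfrac12|\nabla u|^2-\tfrac1{p+1}|u|^{p+1}\bigr)\phi_j+\bigl(c_j+\tfrac{|v_j|^2}{4}\bigr)\int|u|^2\phi_j-v_j\cdot\Im\int\bar u\nabla u\,\phi_j\Bigr),
\]
whose Galilean corrections ($c_j+\frac{|v_j|^2}{4}$ and $-v_j\cdot$ momentum) are precisely what reduces the quadratic form around each boosted soliton to $L_+,L_-$ so that \eqref{orthoef} applies; and one only proves the absolute-value bound $|\mathcal G'(t)|\le C t^{-1/2}\|v(t)\|_{H^1}^2+Ce^{-3\sigma_0^{3/2}t}$, the $t^{-1/2}$ gain coming solely from the $\sqrt t$ width of the cutoffs and the exponential term from the localization of $\nabla\phi_j$ away from the (separated) solitons — exactly as in \eqref{localcontrol} for (gKdV). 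So the ``main obstacle'' you defer to a signed monotonicity statement is neither available in that form nor needed; once you make the rotation reduction, use $x_1$-cutoffs of width $\sqrt t$, and take the correct combined functional, the rest of your argument goes through as in the paper.
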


The proof of Theorem \ref{nsolitonnls} assuming Proposition \ref{vndecaynls} is completely similar to Section 2.2 in the present paper and to Section 2 in \cite{MM06}, thus it is omitted (note that for this part, as in \cite{MM06}, we use the local $H^s$ Cauchy theory due to Cazenave and Weissler \cite{CW90}).

\subsubsection{Proof of the uniform estimates}
We are reduced to prove Proposition \ref{vndecaynls}.
We only sketch the proof since it is  very similar to Section 2.3 of the present paper combined with Section 3 in \cite{MM06}.

The first step of the proof is to reduce (without loss of generality) to the special case where
$$
	v_{1,1} < v_{2,1} < \ldots < v_{N,1},
$$
where $v_{j,k}$ ($j\in \{1,\ldots,N\}$, $k\in \{1,\ldots,d\}$) represents the $k-th$ component of the velocity vector $v_j\in \m R^{d}$.
It is a simple observation, based on the invariance by rotation of the (NLS) equation, see Claim 1, page 855 of \cite{MM06}. 

Next, in the (NLS) case,  modulation theory for $u(t)$ close to $R(t)$ says that there exist parameters $\g y(t)=(y_1(t),\ldots,y_N(t))\in (\m R^d)^N$
and $\mu(t)=(\mu_1(t),\ldots,\mu_N(t))\in \m R^N$ such that
\begin{align*}
& \tilde R_j(t) = R_j(t, x-y_j(t)) e^{i \mu_j(t)}, \quad \tilde R(t) = \sum_{j=1}^N \tilde R_j(t), \quad \tilde Y^\pm_j(t,x) = Y^\pm_j(t,x-y_j(t))e^{i \mu_j(t)},\\
& v(t) = u(t) - \tilde R(t)\quad \text{satisfies}\quad
\forall j =1, \ldots, N, \quad \Re \int v(t) (\nabla \tilde R_j)(t) = \Im \int v(t) \tilde R_j(t)=0,
\end{align*}
Note that the  phase parameter $\mu_j(t)$ is used to control the direction $\Im \int v(t) \tilde R_j(t)$.

In view of \eqref{orthoef}, we are led to set
\begin{equation*}
 \g a^{\pm}(t) = (a^{\pm}_j(t))_{j=1,\ldots,N}, \quad\text{where}\quad
	a^{\pm}_j(t) = \Im \int  \tilde Y^\mp_j(t,x) \bar v(t,x) dx.
\end{equation*}

For  given $\gh a^+ \in \m R^{N}$, we define $\gh b \in \m R^{2N}$ as for the (gKdV) case in Lemma \ref{finaldata}.
We define $T(\gh a^+)$ as in Definition \ref{def}, with  the additional requirement $e^{\sigma_0^{3/2} t}\mu(t)\in B_{\m R^N}(1)$.
By  standard computations, the following holds on $[T(\gh a^+),S_n]$.

\begin{claim} For some $\sigma_0>0$,
	\begin{equation}\label{Ytnls}
		\left\| \frac{d\g y}{dt} (t) \right\| + \left\| \frac{d\mu}{dt} (t)\right\| \leq C \|v(t)\|_{L^2} + C e^{-2 \sigma_0^{3/2}t},
	\end{equation}
	\begin{equation}\label{Anls}
		\left| \frac{da_j^{\pm}}{dt} (t) \pm e_0 c_j^{3/2}a_j^{\pm}(t)\right| \leq C \|v(t)\|_{L^2}^2 + C e^{-3 \sigma_0^{3/2}t}.
	\end{equation}
\end{claim}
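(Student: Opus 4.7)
The plan is to mirror the gKdV computation (the Claim proving \eqref{gkdvSnvn}--\eqref{abound}) in the complex-valued, Galilean-invariant (NLS) setting. First I would derive the evolution equation for $v(t) = u(t) - \tilde R(t)$. Substituting $u = \tilde R + v$ into \eqref{nls}, and using that each unmodulated $R_j$ solves (NLS), one gets a PDE of the form
\[ i v_t + \Delta v + |\tilde R + v|^{p-1}(\tilde R + v) - \sum_{j=1}^N |\tilde R_j|^{p-1} \tilde R_j = \sum_{j=1}^N \l\{ i \l( \frac{dy_j}{dt} - v_j \r) \cdot \nabla \tilde R_j + \frac{d\mu_j}{dt}\, \tilde R_j\r\}, \]
where I am being schematic about normalizations. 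The key uniform decay estimate, following from the distinct-velocity hypothesis and a suitable choice of $\sigma_0$ analogous to \eqref{sigma0}, is that for $k \ne j$,
\[ |\tilde R_k|\,(|\tilde R_j| + |\nabla \tilde R_j| + |\tilde Y_j^\pm| + |\nabla \tilde Y_j^\pm|) \le C\, e^{-3\sigma_0^{3/2} t}\, e^{-\sqrt{\sigma_0}\,|x - v_j t - x_j|}, \]
so that every cross-soliton interaction term produces an $O(e^{-3\sigma_0^{3/2} t})$ error.

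For \eqref{Ytnls}, I would differentiate the orthogonality conditions $\Re \int v\,\overline{\nabla \tilde R_j}=0$ and $\Im \int v\,\overline{\tilde R_j}=0$ in $t$ and insert the equation for $v$. This produces a $2N \times 2N$ linear system for $(\dot y_j, \dot \mu_j)_j$ whose principal part is block-diagonal with entries proportional to $\int|\nabla Q_{c_j}|^2$ and $\int Q_{c_j}^2$ (both non-zero), while the off-diagonal entries are either $O(\|v(t)\|_{L^2})$ (from the quadratic terms in $v$ and the modulation cross-terms involving $v$ and $\nabla \tilde R_j$) or $O(e^{-2\sigma_0^{3/2} t})$ (from the interaction estimate above). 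Inverting the dominant part and using the bootstrap smallness of $v$ yields \eqref{Ytnls}. This is the direct NLS analogue of \eqref{rien}, with the extra phase parameter $\mu_j$ handled exactly as in \cite{MM06}.

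For \eqref{Anls}, the plan is to compute
\[ \frac{d a_j^\pm}{dt} = \Im \int (\partial_t \tilde Y_j^\mp)\, \bar v + \Im \int \tilde Y_j^\mp\, \overline{v_t}, \]
substitute the equation for $v$ in the second term, and integrate by parts. The transport parts of $\partial_t \tilde Y_j^\mp$ (the $-v_j \cdot \nabla$ piece and the modulated phase piece) exactly cancel the corresponding pieces coming from $\Delta$ and the linearization around $\tilde R_j$. What remains is the action of the linearized operator around $\tilde R_j$ on $v$, paired with $\tilde Y_j^\mp$. By the Galilean covariance of (NLS) and the scaling $\tilde Y_j^\pm(t,x) = c_j^{1/(p-1)} Y^\pm(\sqrt{c_j}(\cdot))\,e^{i(\ldots)}$, the eigenvalue identity $\mathcal{L}\, Y^\pm = \pm e_0 Y^\pm$ transfers (after moving $\mathcal{L}$ onto $\tilde Y_j^\mp$ by self-adjointness and taking $\Im$) to the principal contribution $\mp e_0 c_j^{3/2} a_j^\pm$. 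The nonlinear remainder of $|\tilde R + v|^{p-1}(\tilde R+v) - \sum_k |\tilde R_k|^{p-1}\tilde R_k - (\text{linearization in }v\text{ around }\tilde R_j)$ is quadratic in $v$, hence bounded by $C\|v\|_{L^2}^2$; the cross-soliton nonlinear terms are $O(e^{-3\sigma_0^{3/2} t})$ by the interaction estimate; and the modulation contributions $\dot y_j, \dot \mu_j$ appearing in $\partial_t \tilde Y_j^\mp$ and in the equation for $v$ are controlled by \eqref{Ytnls}, producing only $\|v\|_{L^2}^2 + e^{-3\sigma_0^{3/2} t}$ contributions.

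The principal obstacle I anticipate is the bookkeeping around the Galilean boost when transferring $\mathcal{L}\, Y^\pm = \pm e_0 Y^\pm$ from the rest frame of $Q$ to the boosted, translated, phase-modulated frame of the $j$-th soliton, and correctly matching real versus imaginary parts so that each of the orthogonality directions $\Re \int v \,\overline{\nabla \tilde R_j}$ and $\Im \int v\,\overline{\tilde R_j}$ is controlled by exactly one of the modulation parameters $y_j, \mu_j$ (as in \cite{MM06}). Once these algebraic identifications are made carefully, the remaining estimates are routine perturbative bounds strictly parallel to \eqref{rien}--\eqref{abound} in the (gKdV) case.
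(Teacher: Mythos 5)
Your proposal is correct and follows essentially the same route as the paper, whose own proof is only a sketch: write the equation satisfied by $v$, obtain \eqref{Ytnls} by differentiating the orthogonality relations and inverting the almost-diagonal system in $(\dot y_j,\dot\mu_j)$, and obtain \eqref{Anls} by differentiating $a_j^\pm$ and using the eigenvalue relation $\mathcal{L}Y^\pm=\pm e_0 Y^\pm$ together with the exponential smallness of cross-soliton interactions, exactly parallel to \eqref{rien}--\eqref{abound} in the (gKdV) case. The only (harmless, and flagged as schematic) discrepancy is the extra $-v_j$ in your modulation term, since in the paper's convention the translation $v_j t$ is already contained in $R_j$ and $y_j$ is only the modulated shift.
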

\begin{proof} The proof follows 
	 from the equation of $v$
		\begin{equation*}
			i v_t + \Delta v + \sum_{j} \left(|\tilde R_j|^{p-1} v + (p-1) |\tilde R_j|^{p-2} \Re (\tilde R_j v) \right) + O(\|v\|_{H^1}^2)  - \sum_j \frac{dy_j}{dt} {\/\tilde R_j}_x  - i \sum_j \frac{d\mu_j}{dt} \tilde R_j =0,\\
	\end{equation*}
	and direct computations using the definition of $Y^{\pm}$.
\end{proof}

Now we follow exactly the same strategy as in the proof of Theorem \ref{nsoliton}, by proving analogues of 
Lemmas \ref{vcontrol}, \ref{le:a-} and \ref{le:a+}. 

For the proof of the estimate on $v(t)$, we use a functional adapted to the (NLS) equations, as in \cite{MM06} and \cite{MMT06}:
$$
\mathcal{G}(t) =	\sum_{j} \left(\int \left(\frac 12 |\nabla u|^2 -\frac 1{p+1} |u|^{p+1}\right) \phi_j
	+ \left ( c_j + \frac {|v_j|^2}4 \right)\int |u|^2 \phi_j - v_j \cdot \Im \int  \bar u \nabla u  \phi_j  \right),
$$
where
\begin{align*}
& \psi_j(t,x)= \psi\left( \frac 1{\sqrt{t}} (x_1 - m_j(t))\right),\quad
m_j(t)= \frac 12 ( (v_{j,1}+v_{j+1,1}) t + y_{j,1} + y_{j+1,1});\\
& \phi_1= \psi_1,\quad  \phi_j =  \psi_j -  \psi_{j-1}.
\end{align*}
Note that $\mathcal{G}(t)$ controls the size of $v(t)$ in $H^1$ up to $\g a^{\pm}(t)$ as a consequence of \eqref{orthoef}.
As for (gKdV), the following claim allows us to prove the estimate on $\|v(t)\|_{H^1}$.
\begin{claim}  
\begin{equation*}
\left| \frac{d\mathcal{G}}{dt} (t)  \right| \le \frac{C}{\sqrt{t}} \|  v(t) \|_{H^1}^2 + C e^{- 3 \sigma_0^{3/2} t},
\end{equation*}
\end{claim}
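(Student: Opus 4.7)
The plan is to compute $\frac{d\mathcal{G}}{dt}$ directly from the (NLS) equation and to exploit the fact that $\mathcal{G}(t)$ is built from densities of the three conserved quantities (energy, $L^2$ mass, and momentum), modulated to match each soliton's effective Hamiltonian. When $\phi_j$ is replaced by the constant $1$ and one sums over $j$, one recovers a conserved quantity (up to the constants $c_j$, $\frac{|v_j|^2}{4}$, $v_j$). Consequently, all surviving terms in $\frac{d\mathcal{G}}{dt}$ come from the spatial and temporal derivatives of the cutoffs $\phi_j$.

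The first step is to write
\[
\frac{d}{dt}\int|u|^2\phi_j = \int|u|^2\partial_t\phi_j + 2\,\Im\int(\bar u\,\nabla u)\cdot\nabla\phi_j,
\]
\[
\frac{d}{dt}\Im\int \bar u\,\partial_k u\,\phi_j = \int \ldots \partial_t\phi_j + \int\ldots\nabla\phi_j,
\]
\[
\frac{d}{dt}\int\left(\tfrac12|\nabla u|^2-\tfrac1{p+1}|u|^{p+1}\right)\phi_j = \int\ldots\partial_t\phi_j + \int\ldots\nabla\phi_j,
\]
the first computation being Kato's identity adapted to (NLS), the others obtained from a direct use of the equation $iu_t = -\Delta u - |u|^{p-1}u$ and a few integrations by parts. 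All the "bulk" terms (those not containing $\partial_t\phi_j$ or $\nabla\phi_j$) cancel identically because of the conservation structure; this is exactly the same algebraic miracle exploited in \cite{MMT06,MM06}. What remains is a sum of integrals whose integrands contain one derivative of $\phi_j$.

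Second, I would estimate the remaining terms using the structural information on $\phi_j$: we have $|\nabla\phi_j| + |\partial_t\phi_j| + |\Delta\phi_j| \le C/\sqrt{t}$, and the supports of $\nabla\phi_j$, $\partial_t\phi_j$ lie in a strip of width $O(\sqrt t)$ around the hyperplane $x_1 = m_j(t)$, which by \eqref{sigma0} is separated from each $x_1 = v_{k,1}t + y_{k,1}$ by a distance $\gtrsim t$. Writing $u = \tilde R + v$, each cubic or higher order expression in $u$ splits into (i) terms depending only on the solitons $\tilde R_k$: on the support of $|\nabla\phi_j| + |\partial_t\phi_j|$ these are pointwise $O(e^{-3\sigma_0^{3/2} t})$ by the exponential decay of $Q$, giving the $C e^{-3\sigma_0^{3/2}t}$ contribution after integration; (ii) terms at least quadratic in $v$: the factor $1/\sqrt t$ from $|\nabla\phi_j|$ times $\|v\|_{L^2}\|\nabla v\|_{L^2}$ (or $\|v\|_{H^1}^2$ after using Sobolev embedding on the nonlinear piece, which is admissible since $p < \tfrac{d+2}{d-2}$ keeps $|u|^{p-1}$ bounded in $L^\infty$ via the uniform $H^1$ bound on $u$); (iii) linear terms in $v$: these are handled by Cauchy--Schwarz, using the exponential decay of the soliton factors on the support of $\nabla\phi_j$ to absorb them either into the $H^1$-quadratic bound on $v$ or into the exponential remainder.

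The main obstacle is to confirm the cancellation of the bulk terms when summing over $j$: this requires expanding the momentum-type term $v_j\cdot\Im\int\bar u\nabla u\,\phi_j$ carefully, because the weights $(c_j, v_j)$ are $j$-dependent, so the cancellation is not term-by-term but only after using $\sum_j\phi_j = 1$ together with the conservation laws. Concretely, one should verify that the densities grouped by conservation law produce, after integration by parts, either pure $\phi_j$-derivative terms or expressions that vanish because the soliton weights $(c_j+|v_j|^2/4, v_j)$ are precisely the ones appearing in the Hamiltonian of a moving soliton. Once this cancellation is accepted, everything else is an estimate of the type already carried out in \eqref{flocal}--\eqref{localcontrol} for the (gKdV) case, and one concludes
\[
\left|\frac{d\mathcal{G}}{dt}(t)\right| \le \frac{C}{\sqrt t}\|v(t)\|_{H^1}^2 + C e^{-3\sigma_0^{3/2}t}. \qedhere
\]
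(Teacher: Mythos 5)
There is a genuine gap, and it sits exactly where your proposal is most confident: in differentiating the \emph{localized energy} $\int\bigl(\tfrac12|\nabla u|^2-\tfrac1{p+1}|u|^{p+1}\bigr)\phi_j$ directly. For the localized mass and momentum your scheme works: the bulk terms vanish for each $j$ separately (each density obeys a local conservation law, irrespective of the $j$-dependent weights), and the flux terms against $\nabla\phi_j$, $\partial_t\phi_j$ reduce, after integration by parts using the \emph{real-part} structure of the stress tensor, to expressions that are at most quadratic in $(u,\nabla u)$; these are then estimated exactly as in your items (i)--(iii), as in \eqref{flocal}--\eqref{mach}. But for the energy the flux is $-\Re(\bar u_t\nabla u)\cdot\nabla\phi_j$, and substituting $u_t=i(\Delta u+|u|^{p-1}u)$ produces $\Im(\Delta\bar u\,\nabla u)\cdot\nabla\phi_j$. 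Integrating by parts leaves the term $\sum_l\Im\bigl(\partial_l\bar u\,\nabla\partial_l u\bigr)\cdot\nabla\phi_j$, which has an \emph{imaginary-part} (Wronskian) structure and cannot be rewritten as first-order expressions times derivatives of $\phi_j$ (trying the other integration by parts just reproduces the same term). Writing $u=\tilde R+v$, the piece involving $\nabla^2 v$ is simply not controlled by $\tfrac{C}{\sqrt t}\|v(t)\|_{H^1}^2$ in this $H^1$ framework, so your step ``what remains is a sum of integrals whose integrands contain one derivative of $\phi_j$'' followed by the estimates (i)--(iii) does not close.

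The intended argument (the NLS analogue of \eqref{localcontrol}, as in \cite{MM06} and \cite{MMT06}) avoids ever differentiating the localized energy: since the energy enters $\mathcal G$ with the $j$-independent weight $1$ and $\sum_j\phi_j\equiv1$, the energy part of $\mathcal G$ is exactly the conserved quantity $E(u(t))$, so
\[
\frac{d\mathcal G}{dt}=\sum_j\Bigl(c_j+\tfrac{|v_j|^2}{4}\Bigr)\frac{d}{dt}\int|u|^2\phi_j-\sum_j v_j\cdot\frac{d}{dt}\,\Im\int\bar u\,\nabla u\,\phi_j ,
\]
and only the benign mass/momentum identities are needed; equivalently, the dangerous energy-flux terms cancel upon summation because $\sum_j\nabla\phi_j=\sum_j\partial_t\phi_j=0$. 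Note that this is the opposite of where you invoke $\sum_j\phi_j=1$: the bulk cancellation does \emph{not} require summing over $j$ (it holds term by term), whereas the $j$-dependent weights on mass and momentum make a flux cancellation upon summation impossible -- those fluxes must be (and can be) estimated directly. A minor further imprecision: $\nabla\phi_j$ and $\partial_t\phi_j$ are not compactly supported in a strip of width $O(\sqrt t)$; what is actually used is the pointwise bound $|\nabla\phi_j|+|\partial_t\phi_j|\le C/\sqrt t$ together with the exponential smallness of $\tilde R_k$ times these derivatives, as in \eqref{mach}.
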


The estimates of $\g a^{\pm}(t)$ are exactly the same as in Lemmas \ref{le:a-} and \ref{le:a+}, using \eqref{Anls}.

\appendix

\section{Appendix}

\begin{proof}[Proof of Lemma \ref{modulation}]
We use the following  notation $\g y=(y_j)_{j=1,\ldots,N}$ and 
\[ R_j(x) = Q_{c_j}(x-\alpha_j), \quad \tilde R_j(x) = R_j(x-y_j), \quad   \quad R(x) = \sum_{j=1}^N R_j(x) \quad \text{and} \quad \tilde R(x) = \sum_{j=1}^N \tilde R_j(x) . \]
 Let $ w= u-R$ small in $L^2$. Consider 
\[ \begin{array}{rrcl}
\Phi : & L^2 \times \m R^N &  \to & \m R^N, \\
& (w,\g y) & \mapsto & \displaystyle \left( \int (w + R - \tilde R) {\/\tilde R_j}_x \right)_{j=1,\ldots,N}.
\end{array} \]
 Let $\g z =(z_j)_{j=1,\ldots,N} $. By the decay properties of $\tilde R_j$,
\begin{align*}
& (d_{\g y} \Phi(w,\g y).\g z)_j =    \sum_{k=1}^N z_k \int {\/\tilde R_k}_x {\/\tilde R_j}_x 
- z_j \int (w + R - \tilde R) {\/\tilde R_j}_{xx} \\
& =  z_j \| {Q_{c_j}}_x \|_{L^2}^2 +  O \left( \sum_{k \ne j} e^{-\sigma_0 |\alpha_k-\alpha_j|} |z_k| \right)  
  + O(|z_j| \| w \|_{L^2}) + O(|z_j| \| \g y \|).
\end{align*}
Hence 
\begin{equation}\label{diag}
	d_{\g y} \Phi(w,\g y)= \text{diag}(\| {Q_{c_j}}_x \|_{L^2}^2)+O(\sum_{k \ne j} e^{-\sigma_0|\alpha_k-\alpha_j|})+ O(  \| w \|_{L^2}) + O( \| \g y \|).
\end{equation}
Therefore, if $\min \{ |\alpha_k-\alpha_j|, i \ne j \} $ is large enough then $d_{\g y} \Phi(0,0)$ is invertible. Since $\Phi(0,0)=0$, by the implicit function theorem, it follows that there exists $\epsilon >0$, $\epsilon\le \eta$ and a $C^1$ function $\phi : B_{L^2} (0,\epsilon) \to B_{\m R^N}(0,\eta)$ such that  $\Phi(w,\g y) =0$ in $B_{L^2} (0,\epsilon) \times \phi(B_{L^2} (0,\epsilon))$ is equivalent to $\g y=\phi(w)$. Finally we set $v=v(w)=w+ R - \sum_{j=1}^N R_j(\cdot - \phi(w)_j)$.
\end{proof}

\begin{proof}[Proof of Lemma \ref{finaldata}]
%

Consider the maps~:
\begin{equation*}
\begin{array}{rrlrrlrrl}
 {\cal I}:  \   \m R^{2N}  & \to  &  H^1  \qquad    & \ \Theta: \  \q V & \to  & H^1 \times \m R^N \qquad  &  \ {\cal S}:  \  H^1 \times \m R^N & \to &  \m R^{2N} 
\\
 \gh b  & \mapsto &  \sum_{j,\pm} \gh b_j^\pm Z_j^\pm(S_n)   &   w  & \mapsto &  (v,\g y)  &    (v,\g y) & \mapsto &  \left( \int v {\tilde Z_j^\pm} \right)_{j,\pm}
\end{array}
\end{equation*}
where, in the definition of  $\Theta$,   $(v,\g y)$ represents  the modulation of $u = w+R(S_n)$ and $\q V=B_{H^1}(\epsilon)$ ($\epsilon$ being defined in the proof Lemma~\ref{modulation}), and  in the definition of ${\cal S}$, we have set $\tilde Z_j^\pm (x)= Z_j^\pm(S_n,x - y_j)$.

Then ${\cal I}(0) =0$, $\Theta(0)=(0,0)$ and ${\cal S}(0,0)=0$. Recall also from Lemma \ref{modulation}  that
\[ \| v \|_{L^2} + \| \g y\| + \| R_j(S_n) - \tilde R_j(S_n) \|_{H^1} \le C \|w \|_{L^2}. \]

To prove   Lemma \ref{finaldata}, we claim that $\Psi = {\cal S} \circ \Theta \circ {\cal I}$ is a diffeomorphism on a fixed neighbourhood of $0 \in \m R^{2N}$ by computing   $d\Psi = d{\cal S} \circ d\Theta \circ d{\cal I}$. Indeed, we claim

\begin{claim}
	\[ d\Psi(\gh b) = \left(
	\begin{array}{cc}
	A & (\int Z^+ Z^-) A \\
	(\int Z^+ Z^-) A & A
	\end{array} \right) + O(e^{-\sigma_0^{3/2} S_n}+\|\gh b\|), \]
	where $A = \mathrm{diag} ((\| Z_j \|_{L^2}^2)_j) =  \mathrm{diag} ((c_j^{\frac{5-p}{p-1}})_j)$ (recall that $\| Z^\pm \|_{L^2} =1$).
\end{claim}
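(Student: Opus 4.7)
The strategy is to apply the chain rule $d\Psi = d{\cal S}\circ d\Theta\circ d{\cal I}$ and compute each differential explicitly, keeping only the leading terms and absorbing the remainder into $O(e^{-\sigma_0^{3/2}S_n} + \|\gh b\|)$. The map ${\cal I}$ is linear, so $d{\cal I}(\gh b).\gh h = \sum_{k,\pm'}\gh h_k^{\pm'} Z_k^{\pm'}(S_n)$ identically. For $d\Theta$, I would use the implicit function theorem applied to $\Phi(w,\g y)=0$ exactly as in the proof of Lemma~\ref{modulation}: since $d_{\g y}\Phi = \mathrm{diag}(\|{Q_{c_j}}_x\|_{L^2}^2) + O(\|w\|_{L^2} + \|\g y\| + e^{-\sigma_0^{3/2}S_n})$ (see \eqref{diag} with $|\alpha_k-\alpha_j|\gtrsim \sigma_0 S_n$), inversion yields
\[
\frac{d\g y}{dw}.\tilde w = -\Bigl(\frac{\int \tilde w\,({\tilde R_j})_x}{\|{Q_{c_j}}_x\|_{L^2}^2}\Bigr)_{j} + O(\|w\|_{L^2}+\|\g y\|+e^{-\sigma_0^{3/2}S_n})\,\|\tilde w\|_{L^2},
\]
and then $\frac{dv}{dw}.\tilde w = \tilde w + \sum_j \bigl(\frac{d y_j}{dw}.\tilde w\bigr)\,({\tilde R_j})_x$, which is simply the $L^2$-projection of $\tilde w$ onto $\mathrm{Span}(({\tilde R_j})_x)^{\perp}$ up to small errors.

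For $d{\cal S}$ at a point $(v,\g y)$ with $\|v\|_{L^2}+\|\g y\|=O(\|\gh b\|)$, I have $d_v{\cal S}.\tilde v = (\int \tilde v\,\tilde Z_j^\pm)_{j,\pm}$ and $d_{\g y}{\cal S}.\tilde{\g y}$ is linear in $v$, hence of size $O(\|\gh b\|)\|\tilde{\g y}\|$. Composing, the contribution from $d_{\g y}{\cal S}$ is absorbed in the error, and I obtain
\[
(d\Psi(\gh b).\gh h)_{l,\pm} = \sum_{k,\pm'}\gh h_k^{\pm'} \int Z_k^{\pm'}\,Z_l^{\pm}
\;-\;\sum_j \frac{\int\bigl(\sum_{k,\pm'}\gh h_k^{\pm'} Z_k^{\pm'}\bigr)({\tilde R_j})_x}{\|{Q_{c_j}}_x\|_{L^2}^2}\int ({\tilde R_j})_x\,Z_l^{\pm}
\,+\,O(\|\gh b\|+e^{-\sigma_0^{3/2}S_n})\|\gh h\|,
\]
where all $Z_k^{\pm'}$ and $\tilde R_j$ are evaluated at time $S_n$.

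Now I would exploit the two key orthogonality/decay facts. First, the identity \eqref{Q_xZvanish}, $\int Q_x Z^{\pm}=0$, together with scaling invariance, gives $\int (R_j)_x Z_j^{\pm}=0$; and for $k\neq j$ the supports of $R_j$ and $Z_k^{\pm}$ are centered at $c_j S_n+x_j$ and $c_k S_n+x_k$, separated by at least $\sigma_0 S_n$, so by exponential decay $\int (R_j)_x Z_l^{\pm} = O(e^{-\sigma_0^{3/2}S_n})$ for all $j,l$. The shift by $y_j$ of order $\|\gh b\|$ does not spoil this. Consequently the second sum above is absorbed into the error term. Second, the inner products $\int Z_k^{\pm'} Z_l^{\pm}$ are, for $k\ne l$, again $O(e^{-\sigma_0^{3/2}S_n})$ by the same separation argument, and for $k=l$ the scaling $Z_j^{\pm}(t,\cdot)=c_j^{1/(p-1)}Z^{\pm}(\sqrt{c_j}\,\cdot)$ gives
\[
\int Z_l^{\pm}Z_l^{\pm} = \|Z_l\|_{L^2}^2,\qquad \int Z_l^{+}Z_l^{-} = \Bigl(\int Z^+Z^-\Bigr)\|Z_l\|_{L^2}^2.
\]

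Combining these, the $(l,\pm)$-row of $d\Psi(\gh b)$ reduces in leading order to $\|Z_l\|_{L^2}^2\,\gh h_l^{\pm} + (\int Z^+Z^-)\|Z_l\|_{L^2}^2\,\gh h_l^{\mp}$, which is precisely the block-matrix structure announced in the claim with $A=\mathrm{diag}(\|Z_j\|_{L^2}^2)$. The main technical point to monitor throughout is that every dependence on the modulation parameters $(v,\g y)$ induced by $\gh b\ne 0$ is bounded by Lemma~\ref{modulation}, which gives $\|v\|_{L^2}+\|\g y\|\leq C\|\gh b\|$ once we check that $\|{\cal I}(\gh b)\|_{L^2}\leq C\|\gh b\|$ uniformly in $n$; this uniformity follows from $\|Z_j^{\pm}(S_n)\|_{L^2}=\|Z_j\|_{L^2}$ independent of time. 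I expect the only genuinely delicate step to be the careful accounting of which cross terms produce $O(e^{-\sigma_0^{3/2}S_n})$ versus $O(\|\gh b\|)$ contributions, but no individual estimate is hard.
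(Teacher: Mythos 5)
Your proposal is correct and follows essentially the same route as the paper's proof: chain rule through ${\cal S}\circ\Theta\circ{\cal I}$, the implicit-function-theorem computation of $d\Theta$ from \eqref{diag}, the cancellation of the modulation correction via $\int Q_x Z^\pm=0$ (i.e.\ \eqref{Q_xZvanish}) up to $O(e^{-\sigma_0^{3/2}S_n}+\|\gh b\|)$ errors controlled by Lemma~\ref{modulation}, and the identification of the leading term with the Gramm matrix of the $(Z_j^\pm)_{j,\pm}$, whose diagonal blocks give exactly $P$.
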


\begin{nb}
	Note that if $N=1$ (only one soliton), with e.g. $c_1=1$, then the map $\Psi$ is represented by the matrix 
	\[ B= \left(\begin{array}{cc}
	 \int (Z^+)^2 & \int Z^+ Z^-  \\
	\int Z^+ Z^-  &  \int (Z^-)^2
	\end{array} \right)=\text{Gramm}(Z^\pm). \]
	Indeed, the functions $Z^{\pm}$ are orthogonal to $Q_x$, so that $\g y=0$ in this case and $\Psi$ is linear. Since $Z^{\pm}$ are linearly independent (see proof of Lemma \ref{ycoer}), the matrix $B$ is invertible.
	
	The claim means that for the general case $N\geq 2$, we obtain a similar behavior around each soliton plus small terms due to the interaction of the various solitons.
\end{nb}

\begin{proof}
We start with the computation of differentials of ${\cal I}$, $\Theta$ and ${\cal S}$. First, ${\cal I}$ is affine so that $d{\cal I}(\gh b)={\cal I}$ for all $\gh b$.
Second, for $h\in H^1$, $\g z\in \m R^{N}$,
\[ (dS(v,\g y).(h,\g z))_{j,\pm} =  z_j \int v {\/\tilde Z_j^\pm}_x   + \int h \tilde Z_j^\pm . \]
Finally, we consider  $\Theta$.  Let $\Phi$ and $\phi$ be defined as in the proof of the Lemma  \ref{modulation} above for $R(S_n)$.
Then, by \eqref{diag}, $d_{\g y} \Phi(w,\g y)$ is a diagonally dominant matrix and thus it is invertible. Denoting by $M$ its inverse, it follows from
\eqref{diag} that
\[ M = \mathrm{diag}( (\| {Q_{c_j}}_x \|_{L^2}^{-2})_j) + O(\| w \|_{L^2} + \|\g y \| + e^{-\sigma_0^{3/2} S_n}). \]
Differentiating $\Phi(w,\phi(w))=0$ with respect to $w$ and using $M=(d_{\g y} \Phi(w,\g y))^{-1}$, we find
$d\phi = - M  \circ d_w \Phi $.
Since $(d_w \Phi(w,\g y).h)_j = \int h {\/\tilde R_j}_x(S_n)$ and
$$\Theta(w) = \left(w + R-   \sum_j R_j(S_n, \cdot - \phi(w)_j), \phi(w)\right),$$
we obtain
\begin{align*}
 d\Theta(w).h & = (h - \sum_j ({\/\tilde R_j}_x(S_n) ((M \circ d_w \Phi).h)_j  , - M \circ d_w \Phi. h) \\
& = \left( h + \sum_{j=1}^N \| {Q_{c_j}}_x \|_{L^2}^{-2}  {\/\tilde R_j}_x(S_n)  \int h {\/\tilde R_j}_x(S_n) 
, \left( - \| {Q_{c_j}}_x \|_{L^2}^{-2} \int h {\/\tilde R_j}_x(S_n) ) 
\right)_{j=1,\ldots,N} \right)  \\
& \qquad \qquad + O(e^{-\sigma_0^{3/2} S_n} + \| w\|_{L^2}) \| h \|_{L^2}).
\end{align*}
 
Let $\tilde {\gh b}\in \m R^{2N}$. Then, since ${\cal I}$ is linear, we have
$
	d\Psi(\gh b). \tilde {\gh b} = dS(\Theta({\cal I}(\gh b))).(d\Theta({\cal I}(\gh b)).{\cal I}(\tilde {\gh b})).
$
By the previous computations, we have
\begin{equation*}\begin{split}
&d\Theta({\cal I}(\gh b)).{\cal I}(\tilde {\gh b})
\\ &= \left({\cal I}(\tilde {\gh b}) + \sum_{j=1}^N \| {Q_{c_j}}_x \|_{L^2}^{-2}  {\/\tilde R_j}_x(S_n)  \int {\cal I}(\tilde {\gh b}) {\/\tilde R_j}_x(S_n)
, \left( - \| {Q_{c_j}}_x \|_{L^2}^{-2} \int {\cal I}(\tilde {\gh b}) {\/\tilde R_j}_x(S_n) )
\right)_{j=1,\ldots,N} \right)
\end{split}\end{equation*}
Inserting the expression of ${\cal I}(\tilde {\gh b})$, using $\|\g y\|\leq C \|\gh b\|$, $\int Z^{\pm} Q_x =0$ and   the decay properties of the functions $Q$ and $Z$, we get
$$
	d\Theta({\cal I}(\gh b)).{\cal I}(\tilde {\gh b}) = ({\cal I}(\tilde {\gh b}),0) +  O(e^{-\sigma_0^{3/2} S_n}+ \|\gh b\|) \|\tilde {\gh b}\|.
$$
Therefore, using the expression of $d {\cal S}$, we finally obtain
\begin{equation*}
d \Psi(\gh b)  
= \mathrm{Gramm}((Z^\pm_j)_{j,\pm})  +  O( e^{-\sigma_0^{3/2} S_n}+ \|\gh b\|)  = P + O( e^{-\sigma_0^{3/2} S_n}+ \|\gh b\|)  
\end{equation*}
where $\mathrm{Gramm}((Z^\pm_j)_{j,\pm})$ is the Gramm matrix of the family $(Z^\pm_j)_{j,\pm}$
\[ \mathrm{Gramm}((Z^\pm_j)_{j,\pm})_{(j_1, \pm_1),(j_2,\pm_2)} = \int Z^{\pm_1}_{j_1} Z^{\pm_2}_{j_2}, \]
and
\[ P = \left(
\begin{array}{cc}
A & (\int Z^+ Z^-) A \\
(\int Z^+ Z^-) A & A
\end{array} \right), \]
where $A = \mathrm{diag} ((\| Z_j \|_{L^2}^2)_j) =  \mathrm{diag} ((c_j^{\frac{5-p}{p-1}})_j)$ (recall that $\| Z^\pm \|_{L^2} =1$).
This finishes the proof of the claim.
\end{proof} 
 
Since $P$ is invertible ($Z^+$ and $Z^-$ are independent, see proof of Lemma \ref{ycoer}), we deduce that $d \Psi$ is invertible on some ball $B_{\m R^{2N}}(\eta)$ ($\eta>0$ independent of $n$ for $n\geq n_0$ large enough). As a consequence, $\Psi$ is a diffeomorphism from $B_{\m R^{2N}}(\eta)$ to some neighbourhood $\q W$ of $0 \in \m R^{2N}$. Let $\delta >0$ be such that $B_{\m R^{2N}}(\delta) \subset \q W$. For any  $\gh a^+ \in B_{\m R^{N}}(\delta)$, there exist a unique $\gh b=\gh b(\gh a^+) \in B_{\m R^{2N}}(\eta)$ such that $\Psi(\gh b(\gh a^+)) = (\gh a^+, 0)$ and $\|\gh b(\gh a^+)\|\leq C \|\gh a^+\|$.
\end{proof}

\footnotesize

\addcontentsline{toc}{section}{References}
\bibliographystyle{amsplain}
\providecommand{\bysame}{\leavevmode\hbox to3em{\hrulefill}\thinspace}
\providecommand{\MR}{\relax\ifhmode\unskip\space\fi MR }
\providecommand{\MRhref}[2]{%
  \href{http://www.ams.org/mathscinet-getitem?mr=#1}{#2}
}
\providecommand{\href}[2]{#2}

\bigskip

\end{document}